\DeclareMathAlphabet{\mathpzc}{OT1}{pzc}{m}{it}
\newcommand{\Ind}{\ensuremath{\mathbb{I}}}
\newcommand{\GamMatPlain}{\ensuremath{\mathbf{\Gamma}}}
\newcommand{\GamMat}{\ensuremath{\GamMatPlain_\numobs}}
\newcommand{\Gam}{\GamMat}
\newcommand{\SigMatPlain}{\ensuremath{\mathbf{\Sigma}}}
\renewcommand{\SigMat}{\ensuremath{\SigMatPlain_\numobs}}
\newcommand{\SigHat}{\ensuremath{\widetilde{\SigMatPlain}_\numobs}}
\newcommand{\GamHat}{\widehat{\GamMatPlain}_\numobs}
\newcommand{\LFmat}{\mat{U}}
\newcommand{\Bou}{\ensuremath{b}}
\newcommand{\gamn}[1]{\ensuremath{\gamma_\numobs(#1)}}
\newcommand{\DelHat}{\ensuremath{\widehat{\Delta}}}
\newcommand{\betahat}{\ensuremath{\widehat{\beta}}}
\newcommand{\betastar}{\ensuremath{\paramstar}}
\newcommand{\IdMat}{\ensuremath{\mathbf{I}}}
\newcommand{\ZeroMat}{\ensuremath{\mathbf{0}}}
\newcommand{\E}{\mathbb{E}}
\newcommand{\pollm}{PM$_{2.5}$}
\newcommand{\figdir}{figures}
\newcommand{\alphastar}{\ensuremath{\alpha^*}}
\newcommand{\thetastar}{\ensuremath{\theta^*}}
\newcommand{\bou}{\ensuremath{b}}
\newcommand{\instrumentbar}{\ensuremath{\instrument'}}
\newcommand{\covariatebar}{\ensuremath{\covariate'}}
\newcommand{\Umat}{\ensuremath{\mathbf{U}}}
\newcommand{\thetahat}{\ensuremath{\widehat{\theta}}}
\newcommand{\mkap}{\kappa_\numobs}
\newcommand{\DeltaHat}{\widehat{\Delta}}
\newcommand{\GaussVec}{\ensuremath{G}}
\newcommand{\usedim}{\ensuremath{d}}
\newcommand{\strendo}{\ensuremath{\eta}}
\newcommand{\plainstd}{\ensuremath{\nu}}
\newcommand{\onevecd}{\ensuremath{{\bf{1}}_\usedim}}
\newcommand{\quantile}[1]{\ensuremath{r_{#1}}}
\newcommand{\uvec}{\ensuremath{u}}
\newcommand{\noisehat}{\ensuremath{\widehat{\noise}}}
\newcommand{\shortw}{\ensuremath{W}}
\newcommand{\SigSpec}{\ensuremath{\widetilde{\SigMatPlain}_\numobs}}
\newcommand{\SigEst}{\ensuremath{\widehat{\SigMatPlain}_\numobs}}
\newcommand{\errone}{\ensuremath{e}}
\newcommand{\errtwo}{\ensuremath{\widetilde{e}}}
\newcommand{\vvec}{\ensuremath{v}}
\renewcommand{\vhat}{\ensuremath{\widehat{\vvec}}}
\newcommand{\Dmat}{\ensuremath{\mathbf{D}_\numobs}}
\newcommand{\Qmat}{\ensuremath{\mathbf{Q}_\numobs}}
\newcommand{\Lmat}{\ensuremath{\mathbf{L}_\numobs}}
\newcommand{\strinst}{\ensuremath{\alpha}}
\begin{document}


\begin{center}

  {\bf{\LARGE{Instrumental variables: \\ A non-asymptotic viewpoint}}}
\vspace*{.2in}

{\large{
\begin{tabular}{ccc}
Eric Xia$^{\dagger}$ & Martin
J. Wainwright$^{\dagger, \circ}$ & Whitney Newey$^{\ddagger}$ 
\end{tabular}
}}

\vspace*{.2in}

\begin{tabular}{c}
  Department of Electrical Engineering and Computer
  Sciences$^\dagger$\\
  Department of Economics$^\ddagger$ \\
  Department of Mathematics$^\circ$ \\
  Massachusetts Institute of Technology, Cambridge, MA
\end{tabular}

\medskip

\today

\vspace*{.2in}

\begin{abstract}
We provide a non-asymptotic analysis of the linear instrumental
variable estimator allowing for the presence of exogeneous
covariates. In addition, we introduce a novel measure of the strength
of an instrument that can be used to derive non-asymptotic confidence
intervals.  For strong instruments, these non-asymptotic intervals
match the asymptotic ones exactly up to higher order corrections; for
weaker instruments, our intervals involve adaptive adjustments to the
instrument strength, and thus remain valid even when asymptotic
predictions break down.  We illustrate our results via an analysis of
the effect of \pollm~pollution on various health conditions, using
wildfire smoke exposure as an instrument.  Our analysis shows that
exposure to \pollm~pollution leads to statistically significant
increases in incidence of health conditions such as asthma, heart
disease, and strokes.
\end{abstract}

\end{center}

\section{Introduction}

The consistency of standard least-squares estimates rely on covariates
being uncorrelated with the additive noise; when this condition breaks
down, the covariates are said to be endogenous.  Instrumental
variables (IV) provide an important class of methods for addressing
this challenge. Since their introduction in the appendix of Wright's
book~\cite{wright1928tariff}, IV methods have come to play a key role
in semi-parametric statistics, econometrics, reinforcement learning,
and causal inference.  They exploit randomness in an external
measurement, known as an instrument, in order to overcome endogeneity.
Classical work on IV methods focused on their use for
errors-in-variables~\cite{MR14668}, whereas Angrist and
Krueger~\cite{ak91qob} popularized their use for answering causal
questions.  Econometric questions in which IV methods have proven
valuable include understanding the economic returns of
education~\cite{ak91qob, card1995collegeeduc}; the effect of pre-trial
detention on different outcomes~\cite{frandsen2023judges,
  lesliepope2017}; the consequences of serving in the Vietnam War for
future earnings and mortality ~\cite{angrist90viet,angrist95causal};
and the effects of C-sections on infant health
outcomes~\cite{card2023infant}, among others.  Instrumental variables
also arise in reinforcement learning, where they underlie the
TD-learning family of algorithms (e.g.,~\cite{Sut88,SutBar18,
  KhaPanRuaWaiJor20,DuaWanWai24}).

Given this wide range of applied uses, there is now a rich literature
on the theoretical properties of instrumental variables.  The
papers~\cite{angrist95causal, imbens95LATE} provided an explicit
causal interpretation of the IV estimate (or Wald estimator, to be
precise) as the local average treatment effect. It is well-known that
the IV estimator is a special case of the more general class of
estimators known as generalized method of
moments~\cite{newey94handbook}.  From this, the classical asymptotic
theory of GMM estimators~\cite{hansen92gmm} thus also apply. Modern
work in the theory of IV estimators has focused on the \emph{weak
instrument setting}~\cite{stock97weakiv, andrews19weakiv}, where the
correlation (i.e. strength) of the instrument with the covariate of
interest is too small, relative to the sample size. In another line of
work, there has been a focus on using LASSO~\cite{tibshirani96lasso}
to address high-dimensional issues that can occur while using
instrumental variables~\cite{belloni2012doublelasso}, or incorporating
more modern techniques in machine learning under IV
estimation~\cite{chen2021harmless}, with mixed
success~\cite{angrist2019machinelabor}.

The vast majority of this research has been asymptotic in nature. The
references~\cite{hansen92gmm, newey94handbook} provide a classical
$\sqrt{\numobs}$-asymptotic analysis of instrumental variables through
the central limit theorem. Yet it is well know that in modern
statistical settings, this style of CLT analysis often breaks
down. The weak instrument literature studies a specialized asymptotic
regime that considers the ratios of normal
approximations~\cite{andrews19weakiv}. However recent
work~\cite{young2022iv} has highlighted issues with the normal
approximation used within the weak IV literature. In an effort to
compare the finite-sample behavior of various procedures, researchers
have turned towards higher-order asymptotic expansions and comparing
these higher order terms with each other~\cite{neweysmith2004}. The 
paper~\cite{evdokimov2018inference} provides an asymptotic analysis
of various types of IV estimators in the setting with many 
instruments and exogeneous covariates. In
spite of all this, a non-asymptotic analysis of instrumental variables
still remains lacking. One contribution of this work is the first
instance of this style of analysis, to the best of our knowledge.

To further motivate our paper, as a practical application we consider
the health effects of fine particulate matter (PM$_{2.5}$)
exposure. The association between exposure to high levels of
\pollm~and negative health outcomes such as mortality
rates~\cite{kloog2013pm25mortality, franklin2007association},
childhood asthma rates~\cite{lin2002childhood}, cardiovascular issues
in the elderly~\cite{barnett2006effects}, and many other conditions,
is well-known in the literature. However all of these have studied the
association between exposure to \pollm~and health outcomes. There is a
relatively recent line of work using causal inference/econometric
methodology to analyze this connection, beginning with
Pope~\cite{iii1999epidemiology}. The
paper~\cite{schwartz2015estimating} uses air trajectory data as an
instrument for \pollm; a related work~\cite{wang2016estimating} uses a
differences-in-differences approach. Both studies draw conclusions
consistent with previous studies on the effects of \pollm~exposure and
increasing mortality rates. In the last few years alone, there have
been many studies dedicated to analyzing the health effects of
\pollm~exposure in various Asian nations, using atmospheric data as an
instrument~\cite{xu2022assessing}.

\subsection{Our contributions}

The main contribution of this paper is to provide some non-asymptotic
insight into the behavior of the classical IV estimator, both in terms
of its $\ell_2$-error, and in terms of linear functionals of the IV
estimate.  Moreover, in many settings, researchers often introduce
exogeneous covariates; these covariates often are additional
measurements used as features to improve the quality of our estimate
but are nuisances in the sense that we are not interested in their
effect on the response. We provide guarantees on the estimation of the
component of interest in the presence of exogeneous covariates, which
follows as a special case from our results on linear functionals.

Our first set of results (Theorems~\ref{ThmStandardIV}
and~\ref{ThmLinFuncIV}) are aimed at characterizing the rate at which
the IV error converges to the asymptotic prediction.  The terms in
these bounds depend on various aspects of the problem, including the
number of exogenous variables and the strength of endogeneity, and we
illustrate our theoretical predictions via simulations on synthetic
ensembles.  Our second set of results, including
Theorem~\ref{ThmConfInt} and Corollaries~\ref{CorRefineConfInt},
~\ref{cor:uniform-CI} and~\ref{CorScalarConfInt}, provide procedures
for computing confidence intervals based on the data.  These bounds
involve adaptive correction factors to the classical asymptotic
prediction based on the sandwich estimate.  These results also lead us
to a novel measure of the strength of an instrument.

Finally, to illustrate the use of our findings, we apply instrumental
variables to analyze the effects of \pollm~exposure on negative health
outcomes. For our paper, we have constructed a novel dataset based on
various governmental sources to analyze this relationship. In our
approach, we treat each individual census tract as a unit of
observation, and consider aggregate measures of health outcomes for
these census tracts. The data about \pollm~emissions are taken from
the Public Health Tracking Network under the Centers for Disease
Control and Prevention (CDC)~\cite{pm25data}. We constructed an
instrument based around wildfire exposure, taken from the National
Interagency Fire Center~\cite{firedata}. The data for the exogeneous
covariates, including information about the racial and demographic
makeup, breakdown of employment by industry, log median wage,
education, and much more is taken from the 2020 US Census, accessed
via the IPUMS NHGIS~\cite{censusdata}. The health outcomes of census
tracts are obtained from the PLACES project conducted via the
CDC~\cite{placesdata}.

\subsection{Notation}

For a vector $v \in \RR^d$, we use $\twonorm{v}$ to denote its
Euclidean norm.  For a matrix $\matA \in \RR^{d \times k}$, its
spectral norm is given by
\begin{align*}
\specnorm{\matA} = \sup_{v \in \RR^k} \frac{\abs{v^T \matA
    v}}{\twonorm{v}^2},
\end{align*}
and its maximum and minimum singular values by $\maxsingvalue{\matA}$
and $\minsingvalue{\matA}$, respectively.  We use $\identity_\dims \in
\RR^{\dims \times \dims}$ to denote the identity matrix of size
$\dims$.  We use $[\numobs]$ to denote the set $\{1, 2, \ldots \numobs
\}$.  For a pair of symmetric matrices, we write $\matA \succeq \matB$
to mean that $\matA - \matB$ is positive semidefinite (PSD). For a
matrix $\mat{A} \in \RR^{\dims_1 \times \dims_2}$, we use $\mat{A}^T$
to denotes its matrix transpose in $\RR^{\dims_2 \times \dims_1}$. We
use $\overset{p}{\to}$ to denote convergence in probability, and
$\rightsquigarrow$ to indicate convergence in distribution.
Throughout the paper, we use $c_0, c_1, c_2, \ldots$ as universal
constants whose values may change from line to line.

\subsection{Paper organization}

The remainder of the paper is organized as follows.  We begin
in~\Cref{sec:background} with background on the linear IV estimator,
and its extension to exogenous variables.  In~\Cref{SecMain}, we turn
to the main results of the paper, including non-asymptotic bounds for
the standard IV estimate (\Cref{ThmStandardIV}
in~\Cref{sec:stand-iv-results}), along with bounds for linear
functions allowing for exogenous covariates (\Cref{ThmLinFuncIV}
in~\Cref{sec:exog-cov}).  \Cref{sec:comp-conf} is devoted to
non-asymptotic and fully computable confidence intervals.
In~\Cref{SecApplications}, we explore the applied consequences of our
results, including a numerical study of our confidence intervals,
along with an applied study of IV methods for assessing the effects of
PM2.5 pollution.  We conclude with a discussion
in~\Cref{sec:discussion}.


\section{Background}
\label{sec:background}

Here we provide a very brief introduction to the theory of
instrumental variables; see the
sources~\cite{angrist2009mostly,newey94handbook} for more details.
\Cref{SecStandIV} introduces the standard instrumental variable setup,
whereas~\Cref{SecExoIV} introduces the notion of exogeneous
covariates.


\subsection{Standard instrumental variables}
\label{SecStandIV}

Consider a scalar response $\response$ and covariate vector
$\covariate \in \real^d$ linked via the linear model
\begin{subequations}
\begin{align}
\label{eqn:linear-model}
\response & = \inprod{\covariate}{\paramstar} + \noise,
\end{align}
where $\noise$ is an additive noise term.  Ordinary least-squares
(OLS) is a standard way of estimating the unknown vector $\paramstar
\in \real^d$ of parameters.  In the classical formulation, the noise
$\noise$ is assumed to be a zero-mean random variable such that
$\Exs[\covariate \noise] = 0$.  This lack of correlation ensures
consistency of the OLS estimate.

Instrumental variables are designed to handle cases in which the
covariate vector $\covariate$ and noise term $\noise$ are correlated.
Such dependence can arise for various reasons, with an archetypal
example being mis-specification in a linear model.  As a simple but
concrete illustration, consider a linear model with covariates $(X, u)
\in \real^{d} \times \real$ linked to the response $\response$ via
\begin{align}
\label{EqnWell}  
\response & = \inprod{\covariate}{\paramstar} + u \alphastar + w.
\end{align}
\end{subequations}
When this linear model is well-specified, then we are guaranteed to
have the orthogonality condition \mbox{$\Exs[(X, u) w] = 0$.}
However, suppose that the additional covariate $u$ is not observed
(and hence not modeled); in this case, it is natural to view the
model~\eqref{EqnWell} as a version of the $\covariate$-based
model~\eqref{eqn:linear-model} with the augmented noise term $\noise
\defn u \alphastar + w$.  An easy calculation gives $\Exs[X \noise] =
\Exs[X u] \alphastar$, so that whenever $\alphastar \neq 0$, any
correlation between $X$ and $u$ will render $X$ and $\noise$
correlated as well.  In the econometrics literature, this phenomenon
is known as \emph{omitted variable bias}. \\

Returning to the original model~\eqref{eqn:linear-model}, a vector $Z
\in \real^d$ is said to be an instrument if
\begin{align}
\label{eqn:instrument-condition}
\EE[\instrument \noise ] \stackrel{(\star)}{=} 0, \quad \text{and the
  matrix $\GamMatPlain \defn \EE\left[\instrument \covariate^T
    \right]$ is full rank.}
\end{align}
Condition ($\star$) is known as the \emph{clean} instrument condition,
whereas the full-rank condition is known as the \emph{fully
correlated} requirement.  Given an instrument $Z$, a straightforward
calculation using the linear model~\eqref{eqn:linear-model} shows that
$\betastar$ satisfies
\begin{align}
\Exs[Z \response] = \Exs[Z X^T] \betastar = \GamMatPlain \betastar,
\quad \mbox{or equivalently $\betastar = \GamMatPlain^{-1} \Exs[Z
    \response]$,}
\end{align}
where the second equation uses the full rank condition.  While the
expectations defining this relation are unknown, given i.i.d. samples
$\{(\response_i, \covariate_i, Z_i) \}_{i=1}^\numobs$, we can use the
plug-in principle to form the estimate
\begin{align}
\label{eqn:IV-estimator}
\paramhat = \left(\empcrosscov{\instrument}{\covariate^T} \right)^{-1}
\left(\empcrosscov{\response}{\instrument}\right),
\end{align}
which is the standard instrumental variable estimator.

Under i.i.d. sampling and regularity conditions, the standard
asymptotic argument shows that
\begin{align}
\label{eqn:asymptotic-guarantee}
\sqrt{\numobs} (\paramhat - \paramstar) \rightsquigarrow
\mathcal{N}(0, \GamMatPlain^{-1} \SigMatPlain \GamMatPlain^{-T})
\end{align}
where $\GamMatPlain \defn \EE[\instrument \covariate^T]$ and
$\SigMatPlain \defn \EE[\noise^2 \instrument \instrument^T]$.


\subsection{Instrumental variables and exogenous covariates}
\label{SecExoIV}

In most practical settings, in addition to a set of endogenous
covariates $\covariate \in \real^\usedim$, we also have a vector
\emph{exogenous covariates} $\exogcov \in \RR^{\ndims}$, leading to
the augmented model
\begin{align}
\label{eqn:exog-iv-model}
\response &= \inprod{\covariate}{\paramstar} +
\inprod{\exogcov}{\alphastar} + \noise_i,
\end{align}
with the moment restrictions $\EE[\instrument\noise ] = 0$ and
$\EE[\exogcov\noise] = 0$.  Here our primary goal remains to estimate
$\paramstar$, and the additional covariates $\exogcov$ are introduced
to avoid mis-specification and/or reduce the variability in our
estimate of $\paramstar$.  The associated parameter vector
$\alphastar$ can be viewed as a ``nuisance''; it is not of
interest \emph{per se}, but needs to be handled so as to
obtain a better estimate of
$\paramstar$.

Let us now describe how the problem of estimating $\paramstar$ can be
reformulated as one of estimating a linear function of the parameter
vector in a ``lifted'' standard IV estimate.  We define the full
parameter vector $\thetastar \defn \begin{bmatrix} \betastar
  \\ \alphastar \end{bmatrix} \in \real^{d + p}$, along with the lifted
covariate and instrument vectors
\begin{align*}
  \covariatebar \defn \begin{bmatrix} \covariate
    \\ \exogcov \end{bmatrix}, \quad \text{and} \quad \instrumentbar
  \defn \begin{bmatrix} \instrument \\ \exogcov \end{bmatrix}.
\end{align*}
With this notation, our original model can be written more compactly
as $\response = \inprod{\covariatebar}{\thetastar} + \noise$.  Note
that we have have $\EE[\instrumentbar \noise] = 0$ by construction, so
that the lifted vector $\instrumentbar \in \real^{p +d }$ is a valid
instrument.  Consequently, we can compute the standard IV estimate
$\thetahat$ using samples of the triples $(y_i, \covariatebar_i,
\instrumentbar_i)$, and we can recover the estimate $\betahat$ of
interest as $\betahat = \Umat^T \thetahat$, where $\Umat^T
\defn \begin{bmatrix} \IdMat_d & \mathbf{0}_p
\end{bmatrix}$.  We give results on the estimation of such linear
functions of an IV estimate in~\Cref{ThmLinFuncIV}
and~\Cref{CorRefineConfInt} to follow in the sequel.


\section{Main results}
\label{SecMain}

In this section we present the main theorems of this
paper. \Cref{sec:stand-iv-results} is devoted to non-asymptotic
guarantees for the standard instrumental variable
estimator. \Cref{sec:exog-cov} provides extensions to estimation in
the presence of exogeneous covariates, which amounts to estimating a
linear mapping of the original IV estimate. Finally,
in~\Cref{sec:comp-conf}, we turn to the construction of confidence
intervals that can be equipped with non-asymptotic guarantees.


\subsection{Non-asymptotic bounds for standard IV}
\label{sec:stand-iv-results}

We begin with some finite-sample results for the standard IV estimate
without exogenous covariates.  In particular, suppose that we observe
triples $\{(\response_i, \covariate_i, \instrument_i)
\}_{i=1}^\numobs$ such that
  \begin{align}
\label{EqnIVModel}
  \response_i = \inprod{\covariate_i}{\paramstar} + \noise_i \qquad
  \mbox{with $\EE[\instrument_i \noise_i] = 0$.}
  \end{align}  
Defining the matrix $\Gam \defn \empsum{i} \EE[\instrument_i
  \covariate_i^T]$, we assume that it is invertible, as is necessary
for the instrument sequence $\{\instrument_i\}_{i=1}^\numobs$ to be
useful.  Apart from this condition, we impose \emph{no other}
distributional conditions on the covariates $\{\covariate_i
\}_{i=1}^\numobs$.

Our main assumptions are imposed on the zero-mean random vectors
$\instrument_i \noise_i \in \real^\usedim$: in particular, we require
the existence of a finite constant $\Bou$ such that
\begin{subequations}
\begin{align}
\label{EqnIVInd}
\mbox{$\{\instrument_i \noise_i \}_{i=1}^\numobs$ is an independent
  sequence of zero-mean variables, and} \\
\label{EqnIVBound}
\|\Gam^{-1} (\instrument_i \noise_i) \|_2 \leq \Bou \qquad
\mbox{almost surely for each $i = 1, \ldots, \numobs$.}
\end{align}
\end{subequations}
We use the independence~\eqref{EqnIVInd} and $\bou$-boundedness
conditions~\eqref{EqnIVBound} in order to establish tail bounds on
sums of the terms $\Gam^{-1}(\noise_i \instrument_i)$.  We note that
in many practical settings of interest, the instruments
$\instrument_i$ and noise $\noise_i$ are bounded, in which case the
$\bou$-boundedness condition holds.  Moreover, we note that that it is
straightforward (although requiring more technical analysis) to
extend our results to unbounded random vectors with reasonable tail
behavior. \\

\subsubsection{A non-asymptotic bound}

Our result involves the matrices
\begin{subequations}
\begin{align}
\label{EqnDefnGamma}    
\GamHat \defn \empcrosscov{\instrument}{\covariate^T}, \quad
\mbox{and} \quad \Gam \defn \Exs[\GamHat] \equiv \empsum{i}
\EE[\instrument_i \covariate_i^T], \\
\label{EqnDefnSigMat}
\SigSpec \defn \empsum{i} \noise_i^2 \instrument_i \instrument_i^T,
\quad \mbox{and} \quad \SigMat \defn \Exs[\SigSpec] \equiv
\empsum{i} \EE[\noise_i^2 \instrument_i \instrument_i^T].
  \end{align}
\end{subequations}
We also define the zero-mean random vector
\begin{align}
  \label{EqnDefnGaussVec}
 \GaussVec_\numobs & \defn \frac{1}{\sqrt{\numobs}} \sum_{i=1}^\numobs
 \GamMat^{-1} \instrument_i \noise_i,
\end{align}
and observe that we have \mbox{$\cov(\GaussVec_\numobs) = \GamMat^{-1}
  \SigMat \GamMat^{-T}$} by independence of the sequence
$\{\instrument_i \noise_i \}_{i=1}^\numobs$. Thus, given the classical
asymptotic behavior~\eqref{eqn:asymptotic-guarantee} of the standard
IV estimate under i.i.d. sampling, it is reasonable to expect that the
IV estimate---under the more relaxed distribution assumptions that we
impose---should satisfy
\begin{align}
\label{EqnOwen}
\sqrt{\numobs} \; \Exs \|\betahat - \betastar\|_2 & \approx \Exs
\|\GaussVec_\numobs\|_2
\end{align}
for all \emph{suitably large} $\numobs$.  The goal of this section is
to make this intuition precise: we prove an upper bound, valid for
\emph{any} sample size $\numobs$, in which this quantity is the
leading order term. \\

In order to establish a non-asymptotic bound of this type, we need to
introduce a quantity that measures the deviation between the
population matrix $\Gam$ and its empirical counterpart $\GamHat$ from
equation~\eqref{EqnDefnGamma}.  In particular, we define
\begin{align}
\gamn{\Gam} \defn \specnorm{\GamHat^{-1} \Gam - \Id_\dims}
\end{align}
where $\specnorm{\cdot}$ denotes the $\ell_2$-operator norm, or
maximum singular value.  As we discuss below, in typical settings, we
expect that $\gamn{\Gam}$ scales as $\sqrt{\frac{d}{\numobs}}$ with
the dimension $d$ and sample size $\numobs$. \\

\noindent With this set-up, we are now ready to state our first main
result:
\begin{theorem}
\label{ThmStandardIV}
Consider the model~\eqref{EqnIVModel} such that the sequence
$\{\instrument_i \noise_i\}_{i=1}^\numobs$ satisfies the
independence~\eqref{EqnIVInd} and $\bou$-boundedness
conditions~\eqref{EqnIVBound}.  For any $\pardelta \in (0, 1)$, we
have
\begin{align}
\label{EqnMartin}
\twonorm{\paramhat - \paramstar} & \leq \frac{1 +
  \gamn{\Gam}}{\sqrt{\numobs}} \left \{ \Exs \|\GaussVec_\numobs\|_2 +
\sqrt{2 \log(\tfrac{1}{\pardelta}) \EE\specnorm{\Gam^{-1} \SigSpec
    \Gam^{-T}}} + \frac{3\Bou
  \log(\tfrac{1}{\pardelta})}{\sqrt{\numobs}} \right \}
\end{align}
with probability at least $1 - \pardelta$.
\end{theorem}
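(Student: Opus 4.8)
The plan is to decompose the estimation error by writing $\paramhat - \paramstar = \GamHat^{-1}\big(\empcrosscov{\response}{\instrument} - \GamHat\paramstar\big)$ and using the model $\response_i = \inprod{\covariate_i}{\paramstar} + \noise_i$, which gives $\empcrosscov{\response}{\instrument} - \GamHat\paramstar = \empsum{i}\instrument_i\noise_i$. Hence
\begin{align*}
\paramhat - \paramstar = \GamHat^{-1}\Big(\empsum{i}\instrument_i\noise_i\Big) = \GamHat^{-1}\Gam \cdot \Gam^{-1}\Big(\empsum{i}\instrument_i\noise_i\Big).
\end{align*}
The first step is the simple but crucial observation that $\twonorm{\paramhat - \paramstar} \le \specnorm{\GamHat^{-1}\Gam}\cdot \big\|\Gam^{-1}\empsum{i}\instrument_i\noise_i\big\|_2$, and that $\specnorm{\GamHat^{-1}\Gam} \le \specnorm{\GamHat^{-1}\Gam - \Id_\dims} + 1 = 1 + \gamn{\Gam}$ by the triangle inequality. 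This isolates the $(1+\gamn{\Gam})$ prefactor and reduces the problem to controlling the norm of the normalized sum $\frac{1}{\sqrt{\numobs}}\GaussVec_\numobs$... more precisely $\big\|\Gam^{-1}\empsum{i}\instrument_i\noise_i\big\|_2 = \frac{1}{\sqrt{\numobs}}\twonorm{\GaussVec_\numobs}$ by the definition~\eqref{EqnDefnGaussVec}.

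The second and main step is a high-probability bound on $\twonorm{\GaussVec_\numobs}$ of the form $\twonorm{\GaussVec_\numobs} \le \Exs\twonorm{\GaussVec_\numobs} + \sqrt{2\log(1/\pardelta)\,\EE\specnorm{\Gam^{-1}\SigSpec\Gam^{-T}}} + \frac{3\bou\log(1/\pardelta)}{\sqrt{\numobs}}$. I would obtain this from a Bernstein-type concentration inequality for the function $f(\instrument_1\noise_1,\ldots,\instrument_\numobs\noise_\numobs) = \twonorm{\GaussVec_\numobs}$, viewed as a function of the independent terms $\Gam^{-1}(\instrument_i\noise_i)$. The key facts are: (i) $f$ is a norm of a sum, hence each coordinate perturbation changes $f$ by at most $\frac{2\bou}{\sqrt{\numobs}}$ by the triangle inequality together with the $\bou$-boundedness~\eqref{EqnIVBound}; and (ii) the relevant variance proxy for $f$ is controlled by $\frac{1}{\numobs}\specnorm{\sum_i\Exs[\Gam^{-1}\instrument_i\noise_i\noise_i^T\instrument_i^T\Gam^{-T}]} = \specnorm{\Gam^{-1}\SigMat\Gam^{-T}} = \EE\specnorm{\Gam^{-1}\SigSpec\Gam^{-T}}$, using that $\twonorm{\sum_i a_i v_i}^2$ has gradient-type fluctuations bounded by the operator norm of the second-moment matrix. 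Concretely, I would apply a Bernstein inequality for the deviation above the mean of a sum of independent bounded random vectors measured in $\ell_2$ norm (e.g., via the entropy method / bounded-differences-with-variance, or Theorem-style results on vector concentration), which yields exactly the sub-Gaussian term with the $\specnorm{\Gam^{-1}\SigMat\Gam^{-T}}$ variance and the linear-in-$\log(1/\pardelta)$ term with coefficient $O(\bou/\sqrt{\numobs})$.

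Combining the two steps: on the event of probability at least $1-\pardelta$ where the bound on $\twonorm{\GaussVec_\numobs}$ holds, we get
\begin{align*}
\twonorm{\paramhat - \paramstar} \le \frac{1+\gamn{\Gam}}{\sqrt{\numobs}}\twonorm{\GaussVec_\numobs} \le \frac{1+\gamn{\Gam}}{\sqrt{\numobs}}\left\{\Exs\twonorm{\GaussVec_\numobs} + \sqrt{2\log(\tfrac1\pardelta)\,\EE\specnorm{\Gam^{-1}\SigSpec\Gam^{-T}}} + \frac{3\bou\log(\tfrac1\pardelta)}{\sqrt{\numobs}}\right\},
\end{align*}
which is the claimed bound~\eqref{EqnMartin}. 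The main obstacle I anticipate is getting the constants in the concentration inequality exactly right — in particular verifying that the sub-Gaussian term has variance proxy precisely $\EE\specnorm{\Gam^{-1}\SigSpec\Gam^{-T}}$ (rather than a sum of per-coordinate variances, which would be dimension-dependent and too weak) and that the bounded-differences term has the stated constant $3$. This requires a careful choice of concentration tool: a naive bounded-differences (McDiarmid) argument gives the wrong, dimension-inflated variance, so one needs a Bernstein-type bound that exploits the Hilbert-space geometry — tracking the norm of the covariance operator $\Gam^{-1}\SigMat\Gam^{-T}$ rather than the sum of squared bounded-difference coefficients. A secondary subtlety is the non-identically-distributed setting: the argument must go through with $\SigMat = \empsum{i}\EE[\noise_i^2\instrument_i\instrument_i^T]$ an average, which is automatic for the variance term but should be checked for the boundedness term.
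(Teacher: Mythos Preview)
Your proposal is correct and follows essentially the same route as the paper: the same factorization $\paramhat-\paramstar = (\GamHat^{-1}\Gam)\,\Gam^{-1}\big(\empsum{i}\instrument_i\noise_i\big)$ yielding the $(1+\gamn{\Gam})$ prefactor, followed by a Talagrand/Bernstein-type concentration bound on $\|\GaussVec_\numobs\|_2$. The paper makes the second step concrete by writing $\big\|\sum_i V_i\big\|_2 = \sup_{\|u\|_2=1}\sum_i\langle u,V_i\rangle$ as the supremum of an empirical process and invoking the Klein--Rio inequality, whose variance proxy is $\nu^2=\EE\big[\sup_{\|u\|_2=1}\sum_i\langle u,V_i\rangle^2\big]=\numobs\,\EE\specnorm{\Gam^{-1}\SigSpec\Gam^{-T}}$; note that your identification $\specnorm{\Gam^{-1}\SigMat\Gam^{-T}} = \EE\specnorm{\Gam^{-1}\SigSpec\Gam^{-T}}$ is not an equality (only $\leq$, by Jensen), and it is the latter \emph{expected spectral norm of the empirical matrix} that arises from Klein--Rio and appears in the theorem.
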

\noindent We prove this result in~\Cref{SecProofNonAsymp}.  The proof
involves a careful decomposition of the error in the IV estimate, and
then reducing the problem to studying the supremum of a certain
empirical process. 

\paragraph{Some remarks:}  Let us make a few comments so as to interpret the
bound~\eqref{EqnMartin}. Beginning with the leading term, by Jensen's
inequality, we have
\begin{align*}
\Exs \|\GaussVec_\numobs\|_2 & \leq \sqrt{ \Exs
  \|\GaussVec_\numobs\|_2^2} \; = \; \sqrt{ \trace(\GamMat^{-1}
  \SigMat \GamMat^{-T})},
\end{align*}
where the last step uses the fact that $\cov(\GaussVec_\numobs) =
\GamMat^{-1} \SigMat \GamMat^{-T}$ by construction.  Given the
classical asymptotic relation~\eqref{eqn:asymptotic-guarantee}, the
appearance of this covariance matrix is to be expected.

Turning to the second term, let us first consider the uni-variate
setting ($\dims = 1$). In this case, we have $\EE\specnorm{\Gam^{-1}
  \SigSpec \Gam^{-T}} = \trace(\Gam^{-1} \SigMat \Gam^{-T})$ so
the bound simplifies to
\begin{align*}
\twonorm{\paramhat - \paramstar} \leq \frac{1 +
  \gamn{\Gam}}{\sqrt{\numobs}} \left( \big(1 +
\sqrt{2\log(\tfrac{1}{\pardelta}}) \big) \cdot \sqrt{\trace(\Gam^{-1}
  \SigMat \Gam^{-T})} + \frac{3\Bou
  \log(\tfrac{1}{\pardelta})}{\sqrt{\numobs}} \right).
\end{align*}
For an arbitrary dimension $\usedim$, since $\EE\specnorm{\Gam^{-1}
  \SigSpec \Gam^{-T}} \leq \trace(\Gam^{-1} \SigMat \Gam^{-T})$, this
inequality is always valid.  However, in high-dimensional settings,
the trace of a matrix can be be considerably larger---up to factor of
$\usedim$---than its spectral norm, so that the bound that we have
established can be much sharper.

Finally, let us consider the pre-factor $\gamn{\Gam} =
\specnorm{\GamHat^{-1} \Gam - \Id_\dims}$.  In a classical
analysis---viewing the dimension $\dims$ as fixed---this term scales
as $1/\sqrt{\numobs}$ so that we have $1 + \gamn{\Gam} = 1 +
O(1/\sqrt{\numobs})$.  Thus, as $\numobs$ tends to infinity, other
problem parameters fixed, our bound matches the heuristic
prediction~\eqref{EqnOwen} that motivated our analysis.

\subsubsection{Numerics via the non-asymptotic lens}

As noted earlier, our main interest is not re-capturing the asymptotic
prediction, but rather understanding non-asymptotic aspects of the IV
estimate.  In order to highlight some non-asymptotic predictions
of~\Cref{ThmStandardIV}, it is helpful to perform simulations on
synthetic ensembles designed to reveal certain behavior.  In
particular, here we explore the effects of two parameters that can
vary: (i) the degree of endogeneity; and (ii) the number of
instruments $\usedim$.

\paragraph{Degree of endogeneity:}
We begin by describing a simple ensemble to allows us to investigate
the effect of varying endogeneity.  Suppose that we generate covariate
vectors $\covariate \in \real^\usedim$ that are related to the
instrument $\instrument \in \real^\usedim$ and additive noise $\noise
\in \real$ via the equation
\begin{align}
  \label{EqnEndo}
  \covariate & = \strinst \instrument + \strendo \noise \bf{1}_\usedim
  + \plainstd W.
\end{align}
Here $W \sim \mathcal{N}(0, \IdMat_\usedim)$ is a second source of
noise, independent of the pair $(\instrument, \noise)$, and $\onevecd
\in \real^\usedim$ denotes a vector of all-ones.  By construction, we
have $\E [\covariate \noise] = \strendo \onevecd$, so that parameter
$\strendo$ controls the degree of endogeneity.

In order to study the effect of increasing endogeneity, we study
ensembles with fixed instrument strength $\strinst = 1$ and noise
level $\plainstd = 0$, and endogeneity level $\strendo$ varying over
the interval $[0, 3.0]$.  With this set-up, the setting $\strendo = 0$
leads to ensembles with $X = Z$, in which case the IV estimate reduces
to the OLS estimate.  Increasing $\strendo > 0$ leads to problems
where $X$ is endogenous, with the parameter $\strendo$ a measure of
its strength.  We constructed ensembles in dimension $\usedim = 5$ and
parameters chosen to ensure that the asymptotic rescaled MSE $\numobs
\Exs \|\betahat - \betastar \|_2^2$ was equal to $1$.
\begin{figure}[h]
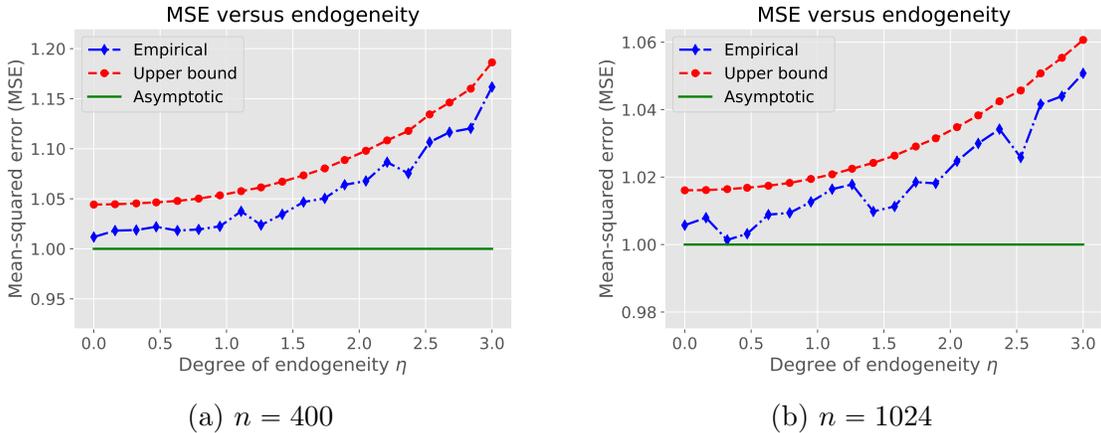

  \begin{center}
    \begin{tabular}{ccc}
      \widgraph{0.45\textwidth}{\figdir/fig_confound_n400_numtrials30000}
      &&
      \widgraph{0.45\textwidth}{\figdir/fig_confound_n1024_numtrials30000}
      \\ (a) $\numobs = 400$ && (b) $\numobs = 1024$
    \end{tabular}      
\caption{Plot of the rescaled mean-squared error (MSE) $\numobs \: \E
  \|\betahat - \betastar\|_2^2$ versus the degree of endogeneity
  $\strendo$. Experiments were performed on ensembles from the
  generative model~\eqref{EqnEndo} with dimension $\usedim = 5$, and
  sample sizes $\numobs = 400$ and $\numobs = 1024$ in panels (a) and
  (b), respectively, with the parameter $\strendo$ controlling the
  degree of endogeneity.  For each ensemble, the asymptotic rescaled
  MSE was equal to $1$, as marked in a solid green line.  The
  empirical MSE was estimated on the basis of $M = 30000$ Monte Carlo
  trials, whereas the theoretical upper bound was computed using the
  $1 + \gamn{\Gam}$ correction to the leading term
  in~\Cref{ThmStandardIV}.}
\label{FigDegreeEndo}  
\end{center}
\end{figure}

\Cref{FigDegreeEndo} plots the rescaled MSE $\numobs \Exs \|\betahat -
\betastar\|_2^2$ versus the endogeneity parameter $\strendo$ for two
different sample sizes ($\numobs = 400$ in panel (a), and $\numobs =
1024$ in panel (b)).  Note that the discrepancy between the asymptotic
prediction and the empirical MSE of the IV estimate (estimated by
Monte Carlo trials) increases as $\strendo$ grows, and the theoretical
upper bound from~\Cref{ThmStandardIV} tracks this behavior.

\paragraph{Growing numbers of instruments:}
In applications of IV, it can be the case that the number of
instruments $\usedim$ is relatively ``large'' compared to the sample
size $\numobs$, in which case the asymptotic predictions might be
inaccurate.  Let us explore some dimensional effects in the context
of~\Cref{ThmStandardIV}.  We begin by observing that under mild
conditions on the pairs $(\covariate_i, \instrument_i)$, standard
random matrix theory (cf. Chapter 5 in the book~\cite{dinosaur2019})
can be used to show that
\begin{align*}
\gamn{\Gam} = \specnorm{\GamHat^{-1} \Gam - \Id_\dims} \asymp
\sqrt{\tfrac{\usedim}{\numobs}}
\end{align*}
with high probability.  For problem ensembles with this behavior, we
we would expect that the MSE should converge to the asymptotic
prediction for $(\numobs, \usedim)$-sequences such that
$\tfrac{\usedim}{\numobs} \rightarrow 0$.

\begin{figure}[h]
  \begin{center}
    \begin{tabular}{c}
      \widgraph{0.45\textwidth}{\figdir/fig_euclid_sublin_numtrials2000}
    \end{tabular}      
\caption{Plot of the rescaled mean-squared error (MSE)
  $(\numobs/\usedim) \E \|\betahat - \betastar\|_2^2$ versus the
  sample size $\numobs$ for ensembles with the dimension
  $\usedim_\numobs$ growing as $\usedim_\numobs = \lceil
  \numobs^{0.30} \rceil$.  The empirical MSE was estimated on the
  basis of $M = 2000$ Monte Carlo trials, whereas the theoretical
  upper bound was computed using the $1 + \gamn{\Gam}$ correction to
  the leading term in~\Cref{ThmStandardIV}.}
\label{FigGrowInstrument}  
\end{center}
\end{figure}

In order to verify this prediction, we generated $(\covariate,
\instrument)$ pairs from the model~\eqref{EqnEndo} with $\strinst =
\strendo = \plainstd = 1$; the noise variable $\noise$ being standard
Gaussian; and the instrument $\instrument \in \real^{\usedim}$ with
i.i.d. Rademacher entries (equiprobable random signs in $\{-1, +1
\}$).  We studied ensembles parameterized by the pair $(\numobs,
\usedim)$, and let the number of instruments grow as
\mbox{$\usedim_\numobs = \lceil \numobs^{0.30} \rceil$.}  As shown
in~\Cref{FigGrowInstrument}, both the bound from~\Cref{ThmStandardIV}
and the empirical MSE of the IV estimator converge to the asymptotic
prediction under this form of high-dimensional scaling.

    
\subsection{Bounds for linear functions and exogenous covariates}
\label{sec:exog-cov}

Recall from~\Cref{SecExoIV} the more general set-up in which the
linear model includes a combination of endogenous covariates
$\covariate \in \real^d$ and exogenous covariates $\exogcov \in
\real^p$, and our goal is to estimate the parameters associated with
the endogenous covariates.  In terms of the sampling model, suppose
that we observe triples $(\response_i, \covariate_i, \exogcov_i,
\instrument_i)$ from the linear model
\begin{align}
\label{EqnIVModelExo}
\response_i & = \inprod{\covariate_i}{\paramstar} +
\inprod{\exogcov_i}{\alphastar} + \noise_i, \qquad \mbox{such that
  $\EE[\instrument_i \noise_i] = 0$ and $\EE[\exogcov_i \noise_i] =
  0$.}
\end{align}

As described in ~\Cref{SecExoIV}, the problem of estimating
$\paramstar$ can be reformulated as an instance of estimating a linear
mapping of the parameter vector in a standard IV model.  We recall
the notation
\begin{align*}
  \thetastar \defn (\paramstar, \alphastar), \quad \covariatebar_i
  \defn (\covariate_i, \exogcov_i), \quad \mbox{and} \quad
  \instrumentbar_i \defn (\instrument_i, \exogcov_i).
\end{align*}
Using this shorthand, we have a more compact reformulation of our
problem: we observe triples $(\response_i, \covariatebar_i,
\instrumentbar_i)$ such that
\begin{subequations}
\begin{align}
\label{EqnIVModelExoCompact}
\response_i & = \inprod{\covariatebar_i}{\thetastar} + \noise_i,
\qquad \mbox{such that $\EE[\instrumentbar_i \noise_i] = 0$.}
\end{align}
The IV estimate is given by $\thetahat = (\frac{1}{\numobs}
\sum_{i=1}^\numobs \instrumentbar_i (\covariatebar_i)^T)^{-1}
\sum_{i=1}^\numobs \instrumentbar_i \response_i$, and our goal is to
estimate the parameter $\betastar = \begin{bmatrix} \IdMat_d &
  \ZeroMat_p
  \end{bmatrix}  \thetastar$,
where $\IdMat_d$ denotes a $d$-dimensional identity matrix, and
$\ZeroMat_p$ denotes a $p$-dimensional matrix of zeroes.  

This problem is special case of estimating the quantity $\Umat^T
\thetastar$ for some matrix $\Umat \in \real^{D \times k}$ with $D
\defn d + p$.  In other applications, we might be interested in
estimating a single co-ordinate of $\thetastar$, so that $\Umat \in
\real^D$ is a column vector.  The theory below encompasses all of
these cases.

\subsubsection{A non-asymptotic bound}

With a slight abuse of notation, we use $(\Gam, \GamHat)$ and
$(\SigMat, \SigSpec)$ to denote the population-empirical matrices
defined as in equations~\eqref{EqnDefnGamma}
and~\eqref{EqnDefnSigMat}, with the pair $(\covariate_i,
\instrument_i)$ replaced by $(\covariatebar_i, \instrumentbar_i)$.  In
terms of tail conditions on the random vectors, we impose the
following $\Bou$-boundedness conditions
\begin{align}
\label{EqnIVBoundExo}  
\|\Gam^{-1} ( \instrumentbar_i \noise_i) \|_2 \leq \Bou, \quad
\mbox{and} \quad \|\LFmat^T \Gam^{-1}(\instrumentbar_i \noise_i)\|_2
\leq \Bou
\end{align}
\end{subequations}
almost surely for each $i = 1, \ldots, \numobs$.  As noted earlier,
these assumptions could be relaxed to sub-Gaussian tail conditions, at
the expense of more complicated arguments and/or truncation arguments.

Recall from equation~\eqref{EqnDefnGaussVec} the random vector
$\GaussVec_\numobs \defn \frac{1}{\sqrt{\numobs}} \sum_{i=1}^\numobs
\GamMat^{-1} \instrument_i \noise_i$.  For a general matrix $\Amat \in
\real^{D \times k}$, we define the functional
\begin{align}
\Psi_\numobs(\Amat; \delta) & \defn \Exs[ \|\Amat^T
  \GaussVec_\numobs\|_2] + \sqrt{2\log(\tfrac{2}{\pardelta}) \EE
  \specnorm{\Amat^T \Gam^{-1} \SigSpec \Gam^{-T} \Amat }} + \frac{3\Bou
  \log(\tfrac{2}{\pardelta})}{\sqrt{\numobs}} 
\end{align}
Our main result involves this functional with $\Amat = \Umat$ in the
leading order term, and $\Amat = \IdMat_D$ in the second-order term.

\begin{theorem}
\label{ThmLinFuncIV}
Consider the model~\eqref{EqnIVModelExoCompact}, and suppose that the
sequence $\{\instrumentbar_i \noise_i \}_{i=1}^\numobs$ is independent
and satisfies the $(\Umat, \bou)$-boundedness
condition~\eqref{EqnIVBoundExo}. Then for any $\pardelta \in (0,1)$,
the IV estimate $\thetahat$ satisfies the bound
\begin{align}
\label{EqnLinFuncIV}  
\twonorm{\LFmat^T (\thetahat - \thetastar)} &\leq
\frac{1}{\sqrt{\numobs}} \Psi_\numobs(\Umat; \delta) +
\frac{\specnorm{\LFmat^T (\GamHat^{-1} \Gam -
    \Id_\dims)}}{\sqrt{\numobs}} \Psi_\numobs(\IdMat_D; \delta)
\end{align}
with probability at least $1 - \pardelta$.
\end{theorem}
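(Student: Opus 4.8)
The plan is to follow the template of the proof of Theorem~\ref{ThmStandardIV}, inserting the matrix $\LFmat^T$ at the appropriate places and isolating all of the empirical-matrix dependence into a single multiplicative remainder. Substituting the model~\eqref{EqnIVModelExoCompact} into the definition of $\thetahat$ and using $\EE[\instrumentbar_i\noise_i]=0$ yields the exact identity $\thetahat-\thetastar=\GamHat^{-1}\big(\tfrac{1}{\numobs}\sum_{i=1}^{\numobs}\instrumentbar_i\noise_i\big)$ (valid on the event that $\GamHat$ is invertible, which is implicit in the claim since $\GamHat^{-1}$ appears on its right-hand side). Left-multiplying by $\LFmat^T$ and using the resolvent-type identity $\GamHat^{-1}=\Gam^{-1}+(\GamHat^{-1}\Gam-\Id_\dims)\Gam^{-1}$, I would split the error as
\begin{align*}
\LFmat^T(\thetahat-\thetastar) \; = \; \underbrace{\tfrac{1}{\numobs}\textstyle\sum_{i=1}^{\numobs}\LFmat^T\Gam^{-1}\instrumentbar_i\noise_i}_{=:A} \; + \; \LFmat^T(\GamHat^{-1}\Gam-\Id_\dims)\,\underbrace{\tfrac{1}{\numobs}\textstyle\sum_{i=1}^{\numobs}\Gam^{-1}\instrumentbar_i\noise_i}_{=:B},
\end{align*}
and then apply the triangle inequality together with sub-multiplicativity to obtain $\twonorm{\LFmat^T(\thetahat-\thetastar)}\le\twonorm{A}+\specnorm{\LFmat^T(\GamHat^{-1}\Gam-\Id_\dims)}\,\twonorm{B}$. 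The point of this decomposition is that the matrix factor $\LFmat^T(\GamHat^{-1}\Gam-\Id_\dims)$ — the only surviving dependence on the \emph{empirical} matrix $\GamHat$ — is peeled off exactly, leaving $A$ and $B$ as sums of independent, zero-mean random vectors built only from population quantities and the data $\{\instrumentbar_i\noise_i\}$.

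The core of the argument is then a single concentration statement, applied once to $A$ and once to $B$. Writing $V_i^{(\Amat)}:=\numobs^{-1}\Amat^T\Gam^{-1}\instrumentbar_i\noise_i$, so that $A=\sum_i V_i^{(\Umat)}$ and $B=\sum_i V_i^{(\IdMat_D)}$, the $(\Umat,\Bou)$-boundedness condition~\eqref{EqnIVBoundExo} gives $\twonorm{V_i^{(\Amat)}}\le\Bou/\numobs$ almost surely for both $\Amat\in\{\Umat,\IdMat_D\}$ (the second inequality in~\eqref{EqnIVBoundExo} handling $\Amat=\Umat$, the first handling $\Amat=\IdMat_D$). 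I would then view $\twonorm{\sum_i V_i^{(\Amat)}}=\sup_{\twonorm{u}\le1}\sum_i\inprod{u}{V_i^{(\Amat)}}$ as the supremum of a bounded empirical process over the Euclidean unit ball and invoke Talagrand's concentration inequality in Bousquet's form to conclude that, with probability at least $1-\delta/2$,
\begin{align*}
\twonorm{\textstyle\sum_i V_i^{(\Amat)}} \; \le \; \EE\twonorm{\textstyle\sum_i V_i^{(\Amat)}} \; + \; \sqrt{\,2\log(\tfrac{2}{\delta})\;\specnorm{\textstyle\sum_i\EE\big[V_i^{(\Amat)}(V_i^{(\Amat)})^{T}\big]}\,} \; + \; \tfrac{3\Bou\log(2/\delta)}{\numobs},
\end{align*}
where the last term absorbs the sub-exponential remainder and the lower-order cross-terms of the inequality. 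It remains to identify the three pieces with the ingredients of $\Psi_\numobs$: by independence and $\SigMat=\EE[\SigSpec]$ one has $\sum_i\EE[V_i^{(\Amat)}(V_i^{(\Amat)})^{T}]=\numobs^{-1}\Amat^{T}\Gam^{-1}\SigMat\Gam^{-T}\Amat$, convexity of the spectral norm (Jensen) gives $\specnorm{\Amat^{T}\Gam^{-1}\SigMat\Gam^{-T}\Amat}\le\EE\specnorm{\Amat^{T}\Gam^{-1}\SigSpec\Gam^{-T}\Amat}$, and the definition~\eqref{EqnDefnGaussVec} of $\GaussVec_\numobs$ gives $\EE\twonorm{\sum_i V_i^{(\Amat)}}=\numobs^{-1/2}\EE\twonorm{\Amat^{T}\GaussVec_\numobs}$. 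Substituting these shows $\twonorm{\sum_i V_i^{(\Amat)}}\le\numobs^{-1/2}\Psi_\numobs(\Amat;\delta)$ on the corresponding event.

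Applying this with $\Amat=\Umat$ to control $\twonorm{A}$ and with $\Amat=\IdMat_D$ to control $\twonorm{B}$, and taking a union bound over the two failure events (each of probability at most $\delta/2$ — the source of the $\log(2/\delta)$ in $\Psi_\numobs$), I obtain with probability at least $1-\delta$ that $\twonorm{A}\le\numobs^{-1/2}\Psi_\numobs(\Umat;\delta)$ and $\twonorm{B}\le\numobs^{-1/2}\Psi_\numobs(\IdMat_D;\delta)$ simultaneously; combined with the decomposition above, this gives exactly~\eqref{EqnLinFuncIV}.

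The main obstacle is the concentration step: obtaining the \emph{spectral-norm} variance proxy $\specnorm{\Amat^{T}\Gam^{-1}\SigSpec\Gam^{-T}\Amat}$ — rather than the cruder trace, which would inflate the second-order term by a factor of the ambient dimension $D=d+p$ — requires the empirical-process viewpoint together with a correct evaluation of the weak variance $\sup_{\twonorm{u}\le1}\sum_i\EE\inprod{u}{V_i^{(\Amat)}}^2=\specnorm{\sum_i\EE[V_i^{(\Amat)}(V_i^{(\Amat)})^{T}]}$, and the absence of any identical-distribution assumption forces the use of a version of Talagrand's inequality that assumes only independence. The remaining steps — the exact error decomposition, the Jensen bound, and the identification of expectations with $\GaussVec_\numobs$ — are routine, and mirror the corresponding steps in the proof of Theorem~\ref{ThmStandardIV}.
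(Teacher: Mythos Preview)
Your proposal is correct and follows essentially the same route as the paper's proof: the same resolvent-type decomposition into the terms $A$ and $B$, the same empirical-process view of each Euclidean norm as a supremum over the unit sphere, and the same union bound over the two events producing the $\log(2/\delta)$ in $\Psi_\numobs$. The only cosmetic difference is that the paper packages the concentration step as a stand-alone lemma and invokes the Klein--Rio inequality (which already carries the ``strong'' variance $\nu^2=\EE\specnorm{\sum_i V_iV_i^T}$), whereas you cite Bousquet's form with the weak variance and then pass to $\EE\specnorm{\cdot}$ via Jensen---both paths yield the same bound.
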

\noindent See~\Cref{SecProofNonAsymp} for a proof of this result. \\

We begin by observing that this claim is the natural generalization
of~\Cref{ThmLinFuncIV} on the standard IV estimate.  In particular, if
we set $\Umat = \IdMat_D$, then the bound~\eqref{EqnLinFuncIV} reduces
to
\begin{align*} \twonorm{\thetahat - \thetastar} &\leq \frac{1 +
\specnorm{\GamHat^{-1} \Gam - \Id_\dims}}{\sqrt{\numobs}} \;
  \Psi_\numobs(\IdMat_D; \delta),
\end{align*}
which is the statement of~\Cref{ThmLinFuncIV} (with slightly altered
notation).

\Cref{ThmLinFuncIV} makes distinct predictions when $\Umat$ is not the
identity.  Concretely, suppose that $\Umat \equiv \uvec \in \real^D$
is a vector, and introduce the shorthand
\begin{subequations}
\begin{align}
\label{EqnDefnLmat}  
  \Lmat & \defn \GamMat^{-1} \SigMat \GamMat^{-T}
\end{align}
for the relevant covariance matrix.  The leading term in
$\Psi_\numobs(\uvec; \delta)$ then can be bounded as \mbox{$\Exs
  |\uvec^T \GaussVec_\numobs| \leq \sqrt{\uvec^T \Lmat \uvec}$,}
whereas the leading order term in $\Psi_\numobs(\IdMat_D; \delta)$ can
be bounded as \mbox{$\Exs \|\GaussVec_\numobs\|_2 \leq \sqrt{
    \trace(\Lmat)}$.} Consequently, disregarding the other terms in
the result, we have an approximate bound of the form
\begin{align}
\label{EqnApproxBound}  
\sqrt{\numobs} \big| \inprod{\uvec}{\thetahat - \thetastar} \big| &
\precsim \sqrt{\uvec^T \Lmat \uvec} + \|\uvec^T(\GamHat^{-1} \Gam -
\Id_\dims)\|_2 \sqrt{\trace(\Lmat)}.
\end{align}
\end{subequations}
The leading term with $\uvec^T \Lmat \uvec$ is as we expect, given the
asymptotic behavior under classical scaling.  As for the second term,
since we expect that $\GamHat$ becomes close to $\Gam$ as $\numobs$
increases, the pre-factor in front of the second term will become
negligible for large $\numobs$.

However, in the non-asymptotic regime, when this term is still
non-negligible, the bound contains a quantity proportional to
$\sqrt{\trace(\Lmat)}$.  Note that this trace quantity will typically
be much larger---often by a dimension-dependent factor---than the
scalar $\sqrt{u^T \Lmat u}$.  Below we undertake some careful
numerical simulations to show that, at least in finite samples, the IV
error itself---and \emph{not just} our bound on it---also exhibits a
dependence on this trace term.


\subsubsection{Some numerical studies}
\label{SecHardEnsemble}

\begin{figure}[h!]
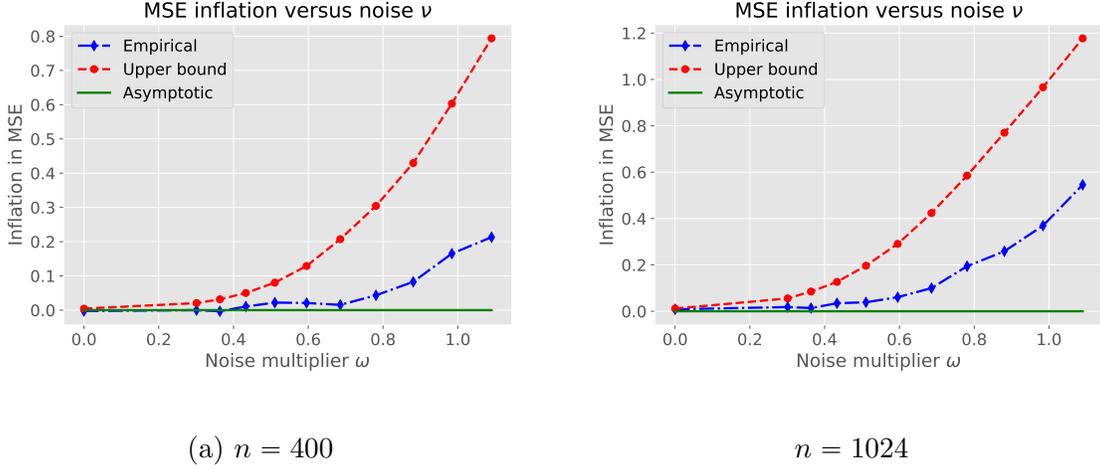

  \begin{center}
\begin{tabular}{ccc}
      \widgraph{0.45\textwidth}{\figdir/fig_hard_xnoise05_numtrials2500}
      &&
      \widgraph{0.45\textwidth}{\figdir/fig_hard_xnoise10_numtrials2500}
      \\ \\ (a) $\numobs = 400$ && $\numobs = 1024$
    \end{tabular}      
\caption{Plots of the log increase in mean-square error (MSE) versus
  the noise parameter $\omega$ for ensembles in dimension $\usedim$
  and $\numobs = 256$ with the error matrix $\Lmat \equiv
  \Lmat(\omega)$ in equation~\eqref{EqnDefnLmatOmega}.  (a)
  Simulations with $\plainstd = 0.50$.  (b) Simulations with
  $\plainstd = 1.0$.  In both panels, solid green lines correspond to
  the asymptotic log MSE (equal to zero by construction); dash-dot
  blue lines correspond to the IV empirical MSE estimated by $M =
  2500$ Monte Carlo trials; and dashed red lines correspond to the
  predictions from equation~\eqref{EqnApproxBound}.}
\label{FigXnoise}  
\end{center}
\end{figure}

Recalling the definition~\eqref{EqnDefnLmat} of the matrix $\Lmat$ and
following discussion, let us describe some numerical studies that
reveal how the IV estimate error itself depends on the trace term
$\trace(\Lmat)$.  In order to do so, we construct an ensemble of
problems, parameterized by a weight $\omega \geq 0$, for which
\begin{align}
\label{EqnDefnLmatOmega}
\Lmat & = \GamMat^{-1} \SigMat \GamMat^{-1} = \diag \big(1, \ldots,
\omega, \ldots, \omega \big).
\end{align}
See~\Cref{AppHardEnsemble} for the details of this construction.

Letting $\uvec = e_1$ be the standard basis vector with a single one
in position $1$, we then have $u^T \Lmat u = 1$ for any value of
$\omega$, whereas $\trace(\Lmat) = 1 + (\usedim - 1) \omega$.
Consequently, the leading term in our bound~\eqref{EqnApproxBound} is
independent of $\omega$, whereas the higher-order trace term grows
linearly in $\omega$.  In summary, then, the asymptotic prediction for
the $\numobs$-rescaled MSE is equal to $1$ for all values of $\omega$,
whereas our theory predicts that the higher-order terms should scale
with $\omega$.

In order to study the correspondence between our non-asymptotic
predictions and the IV error in practice, we simulated from this
ensemble in dimension $\usedim$ with sample size $\numobs = 256$,
instrument strength $\strinst = 1$, endogeneity level $\strendo =
0.10$, and the parameter $\omega$ ranging over the unit interval
$[0,1]$.  We did so for different levels of noise level $\plainstd \in
\{0.5, 1.0\}$.  For each setting of the parameters, we estimated the
MSE of the IV estimate useful based on $M = 2500$ Monte Carlo
trials. \Cref{FigXnoise} gives plots of the log increase in
mean-squared error (MSE) beyond the asymptotic prediction for both the
actual IV estimate (dash-dot blue lines), and our theoretical upper
bound (dashed red lines).  Panels (a) and (b) correspond to noise
levels $\plainstd = 0.50$ and $\plainstd = 1.0$, respectively.
Consistent with the theory, the empirical MSE increases with the noise
multiplier $\omega$ that parameterize the structure of the matrix
$\Lmat \equiv \Lmat(\omega)$.


\subsection{Computable confidence intervals}
\label{sec:comp-conf}

We now turn to the task of devising non-asymptotic and data-dependent
confidence intervals on the error $\betahat - \betastar$ in the IV
estimate.  In particular, we derive bounds that depend on the
empirical matrix $\GamHat = \frac{1}{\numobs} \sum_{i=1}^\numobs
\instrument_i \covariate_i^T$, along with a standard estimate of the
matrix \mbox{$\SigMat \defn \frac{1}{\numobs} \sum_{i=1}^\numobs
  \EE[\noise_i^2 \instrument_i \instrument_i^T]$}---namely, the random
matrix
\begin{subequations}
\begin{align}
\label{EqnDefnSigEst}  
\SigEst & \defn \frac{1}{\numobs-1} \sum_{i=1}^\numobs \noisehat^2_i
\instrument_i \instrument_i^T
\end{align}
where $\noisehat_i \defn \response_i -
\inprod{\covariate_i}{\betahat}$ are the residuals associated with the
IV estimate.  Combining these ingredients yields the standard
``sandwich'' estimate $\GamHat^{-1} \SigEst \GamHat^{-T}$ for the
asymptotic covariance.

In addition to our previous assumptions, our analysis requires the
following third-moment bound on the random vectors $\SigMat^{-1/2}
\noise_i \instrument_i$.
\begin{align}
\label{EqnThirdMoment}  
\frac{1}{\numobs} \sum_{i=1}^\numobs \EE \|\SigMat^{-1/2} (\noise_i
\instrument_i)\|_2^3 \leq m_3.
\end{align}
In addition, for a given error probability $\delta \in (0,1)$, we
state our results in terms of the quantile functional $\delta \mapsto
\quantile{\delta}$ defined by the relation
\begin{align}
\label{EqnQuantile}
\Prob \big[ |V| \geq \quantile{\delta} \big] = \delta \qquad
  \mbox{where $V \sim \mathcal{N}(0, 1)$.}
\end{align}
\end{subequations}
For example, when $\delta = 0.05$, we have $\quantile{\delta} \approx
1.96$.

\subsubsection{Bound for a general linear function}

With this set-up, we begin by stating a result that specifies a
confidence interval for a linear function of the IV error
\mbox{$\DelHat \defn \betahat - \betastar$.}  Defining the random
vectors $V_i \defn \inprod{v}{\instrument_i} \covariate_i \in
\real^\usedim$, our result involves an error term given by
\begin{align}
\label{EqnDefnKapErr}  
\errone_\numobs(\DelHat; \vvec) & \defn \sqrt{\DelHat^T \Qmat(\vvec)
  \DelHat} \qquad \mbox{where $\Qmat(\vvec) \defn \frac{1}{\numobs
    (\numobs-1)} \sum_{i < j} (V_i - V_j) (V_i - V_j)^T$.}
  \end{align}
\begin{theorem}
\label{ThmConfInt}
Suppose that, in addition to the assumptions of~\Cref{ThmStandardIV},
the third moment condition~\eqref{EqnThirdMoment} holds. Then for any
scalars $\pardelta, \pardelta' \in (0,1)$ and for any vector $\vvec
\in \real^d$, we have
  \begin{align}
\label{EqnConfInt}    
\sqrt{\numobs} |\inprod{\vvec}{\GamHat \DelHat}| & \leq
\quantile{\delta} \left \{\sqrt{\vvec^T \SigEst \vvec} +
\underbrace{\bound \|\vvec\|_2 \sqrt{\frac{8 \log(1/\pardelta')}{\numobs -
      1}}}_{\Term_1} + \underbrace{\errone_\numobs(\DelHat; \vvec)}_{\Term_2}
\right \}
\end{align}
with probability at least $1 - \pardelta - \pardelta' - \frac{c
  m_3}{\sqrt{\numobs}}$.
\end{theorem}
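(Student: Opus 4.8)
The plan is to recognize $\sqrt{\numobs}\,\inprod{\vvec}{\GamHat\DelHat}$ as a normalized sum of independent random variables, invoke a Berry--Esseen bound to replace it by a Gaussian of the correct variance, and then replace that (unknown) variance by the computable sandwich quantity $\sqrt{\vvec^T\SigEst\vvec}$, at the cost of the two corrections $\Term_1$ and $\Term_2$. For the reduction, the defining relation $\GamHat\betahat = \tfrac{1}{\numobs}\sum_i \instrument_i\response_i$ gives $\GamHat\DelHat = \tfrac{1}{\numobs}\sum_i \instrument_i(\response_i - \inprod{\covariate_i}{\betastar}) = \tfrac{1}{\numobs}\sum_i \instrument_i\noise_i$, so that with $\xi_i \defn \inprod{\vvec}{\instrument_i}\noise_i = \inprod{\vvec}{\instrument_i\noise_i}$ we have $\sqrt{\numobs}\,\inprod{\vvec}{\GamHat\DelHat} = \tfrac{1}{\sqrt{\numobs}}\sum_i \xi_i$. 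By hypothesis the $\xi_i$ are independent and zero-mean, and writing $\sigma^2 \defn \vvec^T\SigMat\vvec = \tfrac{1}{\numobs}\sum_i \EE\xi_i^2$, this is a normalized sum of variance $\sigma^2$ (if $\sigma=0$ the bound is immediate, so assume $\sigma>0$).

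Next I would apply a Berry--Esseen bound for independent, non-identically distributed summands: $\sup_x |\Prob[\tfrac{1}{\sigma\sqrt{\numobs}}\sum_i\xi_i \le x] - \Phi(x)| \le c_0\,\numobs^{-3/2}\sigma^{-3}\sum_i \EE|\xi_i|^3$. The point that makes this dimension-free is that Cauchy--Schwarz gives $|\xi_i| = |\inprod{\SigMat^{1/2}\vvec}{\SigMat^{-1/2}(\noise_i\instrument_i)}| \le \sigma\,\|\SigMat^{-1/2}(\noise_i\instrument_i)\|_2$, whence $\tfrac{1}{\numobs}\sum_i\EE|\xi_i|^3 \le \sigma^3\cdot\tfrac{1}{\numobs}\sum_i \EE\|\SigMat^{-1/2}(\noise_i\instrument_i)\|_2^3 \le \sigma^3 m_3$ by the third-moment condition~\eqref{EqnThirdMoment}. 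So the Berry--Esseen discrepancy is at most $c_0 m_3/\sqrt{\numobs}$, and since $\Prob[|Z|\ge\quantile{\delta}]=\delta$ for $Z\sim\Normal(0,1)$ by~\eqref{EqnQuantile}, passing to the two-sided tail gives $\Prob[\,|\sqrt{\numobs}\,\inprod{\vvec}{\GamHat\DelHat}| \ge \quantile{\delta}\,\sigma\,] \le \delta + c_1 m_3/\sqrt{\numobs}$.

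It remains to show that, with probability at least $1-\delta'$, $\sigma \le \sqrt{\vvec^T\SigEst\vvec} + \Term_1 + \Term_2$; combining this with the previous display and a union bound then finishes the proof with the stated failure probability $\delta+\delta'+cm_3/\sqrt{\numobs}$. I would route through the sample variance $\widehat\sigma^2 \defn \tfrac{1}{\numobs-1}\sum_i(\xi_i-\bar\xi)^2$, $\bar\xi \defn \tfrac{1}{\numobs}\sum_i\xi_i$. First, a short computation using independence shows $\EE\widehat\sigma^2 = \sigma^2$; a Bernstein bound on the bounded, independent terms $\xi_i^2 - \EE\xi_i^2$ (variance proxy $\lesssim \Bou^2\|\vvec\|_2^2\sigma^2$, range $\lesssim \Bou^2\|\vvec\|_2^2$), together with a lower-order bound on $\bar\xi^2$ and resolution of the resulting quadratic inequality in $\sigma$, gives $\sigma \le \widehat\sigma + \Bou\|\vvec\|_2\sqrt{8\log(1/\delta')/(\numobs-1)} = \widehat\sigma + \Term_1$ on an event of probability $\ge 1-\delta'$. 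Second — and this step is purely algebraic — writing $a_i \defn \inprod{\vvec}{\instrument_i}$ and $w_i \defn a_i\inprod{\covariate_i}{\DelHat}$, the IV residuals satisfy $a_i\noisehat_i = \xi_i - w_i$, so $\vvec^T\SigEst\vvec = \tfrac{1}{\numobs-1}\sum_i(\xi_i - w_i)^2$. The key identity is $\sum_i w_i = \vvec^T(\sum_i \instrument_i\covariate_i^T)\DelHat = \numobs\,\vvec^T\GamHat\DelHat = \sum_i\xi_i$, so $\bar w = \bar\xi$ and $\sum_i(\xi_i - w_i) = 0$. Viewing $\xi=(\xi_i)$ and $w=(w_i)$ as vectors in $\real^{\numobs}$ and letting $\Pi$ be orthogonal projection onto $\{\mathbf 1\}^\perp$, we get $\xi - w = \Pi(\xi - w) = \Pi\xi - \Pi w$, hence $\|\Pi\xi\|_2 \le \|\xi - w\|_2 + \|\Pi w\|_2$; dividing by $\sqrt{\numobs-1}$ and using $\widehat\sigma = \tfrac{1}{\sqrt{\numobs-1}}\|\Pi\xi\|_2$ and — via the elementary ``sum of squared pairwise differences'' identity applied to $\Qmat(\vvec)$ — $\errone_\numobs(\DelHat;\vvec)^2 = \DelHat^T\Qmat(\vvec)\DelHat = \tfrac{1}{\numobs-1}\sum_i(w_i-\bar w)^2 = \tfrac{1}{\numobs-1}\|\Pi w\|_2^2$, we obtain $\widehat\sigma \le \sqrt{\vvec^T\SigEst\vvec} + \errone_\numobs(\DelHat;\vvec) = \sqrt{\vvec^T\SigEst\vvec} + \Term_2$. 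Chaining, $|\sqrt{\numobs}\,\inprod{\vvec}{\GamHat\DelHat}| \le \quantile{\delta}\sigma \le \quantile{\delta}(\widehat\sigma + \Term_1) \le \quantile{\delta}(\sqrt{\vvec^T\SigEst\vvec} + \Term_1 + \Term_2)$ on the intersection of the two good events.

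The main obstacle is the variance-concentration step: a naive Hoeffding bound on $\tfrac{1}{\numobs}\sum_i\xi_i^2$ concentrates only at rate $\numobs^{-1/2}$, which after taking a square root would degrade to an $\numobs^{-1/4}$ correction; getting the advertised $\numobs^{-1/2}$ rate in $\Term_1$ requires a Bernstein bound exploiting $\mathrm{Var}(\xi_i^2) \lesssim \Bou^2\|\vvec\|_2^2\sigma^2$ and then the quadratic-inequality manipulation that isolates $\sigma$, with some care about the lower-order $\bar\xi^2$ term and the (near-)boundedness of the $\xi_i$ inherited from~\eqref{EqnIVBound}. A lesser subtlety is routing the Berry--Esseen third-moment term through the $\SigMat^{-1/2}$-normalized quantity in~\eqref{EqnThirdMoment}, which is precisely what makes the probability correction $m_3/\sqrt{\numobs}$ free of hidden dimension dependence.
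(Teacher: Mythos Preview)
Your proposal is correct and follows essentially the same three-step architecture as the paper: Berry--Esseen to get $\sqrt{\numobs}|\inprod{\vvec}{\GamHat\DelHat}|\le\quantile{\delta}\sqrt{\vvec^T\SigMat\vvec}$, then concentration of $\sigma$ around the sample standard deviation $\widehat\sigma$, then the algebraic reduction $\widehat\sigma\le\sqrt{\vvec^T\SigEst\vvec}+\errone_\numobs(\DelHat;\vvec)$ via the identity $\sum_i\inprod{\vvec}{\instrument_i}\noisehat_i=0$ (your projection-in-$\real^\numobs$ phrasing is equivalent to the paper's pairwise-difference triangle inequality). The one simplification the paper makes is that for the variance-concentration step it invokes the empirical Bernstein inequality of Maurer--Pontil (Theorem~10 in \cite{maurerpontil2009}) as a black box, which directly yields $\sigma\le\widehat\sigma+\bound\|\vvec\|_2\sqrt{8\log(1/\pardelta')/(\numobs-1)}$ and spares you the Bernstein-plus-quadratic-resolution argument you flag as the main obstacle.
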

\noindent See~\Cref{AppProofThmConfInt} for the proof of this claim. \\

\noindent Let us discuss some features of this bound.
\vspace*{-0.1in}
\paragraph{Rough scaling:}
By inspection, we see that $\Term_1 = \mathcal{O}(1/\sqrt{\numobs})$
so it becomes negligible as $\numobs$ tends to infinity.  Note that we
have $\Term_2 \defn \errone_\numobs(\DelHat; \vvec) \leq
\sqrt{\specnorm{\Qmat(\vvec)}} \|\DelHat\|_2$.  Observe that
$\Qmat(\vvec)$ is a standard estimate for the covariance matrix of the
random vector $V = \inprod{v}{\instrument} \covariate$; consequently,
we expect it to be bounded with high probability under mild
conditions.  In this case, the quantity $\Term_2$ should be $o_p(1)$
whenever the IV estimate is consistent in probability (so that
$\|\DelHat\|_2$ goes to zero).  Thus, a compact summarization is given
by the bound $\sqrt{\numobs} \big|\inprod{\vvec}{\GamHat \DelHat}|
\leq \quantile{\delta} \big \{ \sqrt{\vvec^T \SigEst \vvec} + o_p(1)
\big \}$ with the claimed probability.

\paragraph{Bounds on  individual entries $|\DelHat_j|$:} Letting
$\uvec \in \real^\usedim$ be a vector, suppose that our goal is to
specify a confidence interval for the inner product
$\inprod{\uvec}{\DelHat}$.  As a concrete example, setting $\uvec =
e_j$, the standard basis vector with a one in position $j$, would
allow us to compute a confidence interval for entry $\betastar_j$ of
the unknown parameter vector $\betastar$.  In this case, we would like
to apply~\Cref{ThmConfInt} with the vector $\vhat \defn \GamHat^{-T}
e_j$, so that $\inprod{\vhat}{\GamHat \DelHat} = \DelHat_j$.  However,
this choice is \emph{not valid} since the vector $\vhat$ is random
(depending on $\GamHat$).

However, we can side-step this difficulty by
applying~\Cref{ThmConfInt} to the deterministic vector $\vvec \defn
\GamMat^{-T} \uvec$, and then bounding the difference between
$\GamHat$ and its expectation $\GamMat$.  As with our previous
non-asymptotic results, this difference can be captured via the
zero-mean difference matrix
\begin{align}
\Dmat & \defn \GamMat^{-1} \GamHat - \IdMat_\usedim.
\end{align}
We summarize in the following:
\begin{corollary}
\label{CorRefineConfInt}  
Under the conditions of~\Cref{ThmConfInt}, for any unit-norm vector
$\uvec \in \real^d$, we have
\begin{subequations}
  \begin{align}
\sqrt{\numobs} |\inprod{\uvec}{\DelHat}| & \leq \quantile{\delta}
\Biggr\{ \underbrace{\sqrt{\uvec^T \GamHat^{-1} \SigEst \GamHat^{-T}
    \uvec} + \bound \|\GamHat^{-T} \uvec \|_2 \sqrt{\frac{8
      \log(1/\pardelta')}{\numobs - 1}}}_{\Term_3} \Biggr \} +
\Term_4, \qquad \mbox{where} \\
\Term_4 & \defn \quantile{\delta} \specnorm{\Qmat(\vvec)}
\|\DelHat\|_2 + \specnorm{\Dmat} \Biggr \{ \quantile{\delta}
\specnorm{\GamHat^{-1} \SigEst^{1/2}} + \bou \specnorm{\GamHat}
\sqrt{\tfrac{8 \log(1/\pardelta')}{\numobs - 1}} + \sqrt{\numobs}
\|\DelHat\|_2 \Biggr \}
  \end{align}
\end{subequations}
with probability at least $1 - \pardelta - \pardelta' - \frac{c
  m_3}{\sqrt{\numobs}}$.
\end{corollary}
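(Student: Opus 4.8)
The plan is to apply Theorem~\ref{ThmConfInt} to the deterministic vector $\vvec \defn \GamMat^{-T} \uvec$ and then convert the resulting bound, which is stated in terms of $\GamHat \DelHat$ and the empirical matrices $\SigEst, \Qmat(\vvec)$, into one on $\inprod{\uvec}{\DelHat}$ by systematically replacing each appearance of $\GamHat$ with $\GamMat$ and controlling the error via the difference matrix $\Dmat \defn \GamMat^{-1}\GamHat - \IdMat_\usedim$. First I would write $\inprod{\vvec}{\GamHat\DelHat} = \uvec^T \GamMat^{-1}\GamHat\DelHat = \inprod{\uvec}{\DelHat} + \uvec^T \Dmat \DelHat$, so that $|\inprod{\uvec}{\DelHat}| \leq |\inprod{\vvec}{\GamHat\DelHat}| + \specnorm{\Dmat}\|\DelHat\|_2$; after multiplying by $\sqrt{\numobs}$, the second term contributes the $\specnorm{\Dmat}\sqrt{\numobs}\|\DelHat\|_2$ piece inside $\Term_4$. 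The remaining work is to bound $\sqrt{\numobs}|\inprod{\vvec}{\GamHat\DelHat}|$ using Theorem~\ref{ThmConfInt}, which gives the three terms $\quantile{\delta}\sqrt{\vvec^T\SigEst\vvec}$, $\quantile{\delta}\Term_1$, and $\quantile{\delta}\Term_2$ with $\vvec = \GamMat^{-T}\uvec$.

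Next I would massage each of these three terms. For the main term, $\vvec^T \SigEst \vvec = \uvec^T \GamMat^{-1}\SigEst\GamMat^{-T}\uvec$; I want this expressed via $\GamHat^{-1}$ instead. Writing $\GamMat^{-1} = (\IdMat + \Dmat)\GamHat^{-1}$ — since $\GamHat^{-1} = \GamMat^{-1}\GamHat\GamHat^{-1}\cdot$, more precisely $\GamMat^{-1} = \GamHat^{-1} + \Dmat\GamHat^{-1}$ follows from $\Dmat = \GamMat^{-1}\GamHat - \IdMat$ multiplied on the right by $\GamHat^{-1}$ — one gets $\sqrt{\vvec^T\SigEst\vvec} \leq \sqrt{\uvec^T\GamHat^{-1}\SigEst\GamHat^{-T}\uvec} + \specnorm{\Dmat}\specnorm{\GamHat^{-1}\SigEst^{1/2}}$ by expanding the quadratic form, applying the triangle inequality in the $\SigEst^{1/2}$-norm, and using $\|\Dmat\GamHat^{-1}\SigEst^{1/2}\uvec\|_2 \leq \specnorm{\Dmat}\specnorm{\GamHat^{-1}\SigEst^{1/2}}\|\uvec\|_2$ with $\|\uvec\|_2 = 1$. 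This produces the $\sqrt{\uvec^T\GamHat^{-1}\SigEst\GamHat^{-T}\uvec}$ contribution to $\Term_3$ and the $\quantile{\delta}\specnorm{\Dmat}\specnorm{\GamHat^{-1}\SigEst^{1/2}}$ contribution to $\Term_4$. Similarly, for $\Term_1 = \bou\|\vvec\|_2\sqrt{8\log(1/\pardelta')/(\numobs-1)}$ I would write $\|\vvec\|_2 = \|\GamMat^{-T}\uvec\|_2 \leq \|\GamHat^{-T}\uvec\|_2 + \specnorm{\Dmat}\specnorm{\GamHat^{-T}}$ — but to match the stated $\Term_4$ I should instead bound $\|\GamMat^{-T}\uvec\|_2 \le \|\GamHat^{-T}\uvec\|_2 + \specnorm{\Dmat^T \GamHat^{-T}}\le \|\GamHat^{-T}\uvec\|_2 + \specnorm{\Dmat}\specnorm{\GamHat^{-1}}$, which after noting $\specnorm{\GamHat} $ appears in the target rather than $\specnorm{\GamHat^{-1}}$ suggests the intended route is via $\GamMat^{-T} = \GamHat^{-T}(\IdMat + \Dmat^T)$ giving $\|\GamMat^{-T}\uvec\|_2 \le \|\GamHat^{-T}\uvec\|_2 + \specnorm{\GamHat^{-T}}\specnorm{\Dmat}$; I will reconcile the precise matrix norm bookkeeping so that the term $\bou\specnorm{\GamHat}\sqrt{8\log(1/\pardelta')/(\numobs-1)}$ multiplied by $\specnorm{\Dmat}$ emerges, which amounts to choosing the grouping $\GamMat^{-T} = (\GamHat\GamMat^{-1})^{T}\GamHat^{-T}$ wait — cleanest is $\GamMat^{-1} = (\IdMat_\usedim + \Dmat)\GamHat^{-1} \cdot \GamHat \GamMat^{-1}$; I would just carefully track that $\GamHat\GamMat^{-1} = (\IdMat + \Dmat)^{-1}$ is not what appears, so instead the bound uses $\|\GamMat^{-T}\uvec - \GamHat^{-T}\uvec\|_2 = \|\GamHat^{-T}\Dmat^T\GamMat^{-T}\uvec\|_2$, hmm. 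Finally, for $\Term_2 = \errone_\numobs(\DelHat;\vvec) = \sqrt{\DelHat^T\Qmat(\vvec)\DelHat} \le \sqrt{\specnorm{\Qmat(\vvec)}}\,\|\DelHat\|_2$, but the target writes $\specnorm{\Qmat(\vvec)}\|\DelHat\|_2$ (no square root on $\Qmat$), so I would use the crude bound $\sqrt{\specnorm{\Qmat(\vvec)}} \le \max\{1, \specnorm{\Qmat(\vvec)}\}$ or, more likely, the corollary intends $\specnorm{\Qmat(\vvec)}$ to stand for $\sqrt{\specnorm{\Qmat(\vvec)}}$ under the running conventions — in any case this term is an $o_p(1)$ remainder folded into $\Term_4$.

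The main obstacle is the bookkeeping: ensuring every substitution $\GamHat \leftrightarrow \GamMat$ is routed through $\Dmat$ in the direction that produces exactly the operator norms $\specnorm{\GamHat^{-1}\SigEst^{1/2}}$, $\specnorm{\GamHat}$, and the $\sqrt{\numobs}\|\DelHat\|_2$ factor appearing in the stated $\Term_4$, rather than their inverses or transposes, and that all the $\specnorm{\Dmat}$ prefactors collect into the single bracketed expression. There is no deep probabilistic content beyond Theorem~\ref{ThmConfInt} itself — the probability $1 - \pardelta - \pardelta' - cm_3/\sqrt{\numobs}$ is inherited verbatim since $\Dmat$, $\GamHat$, $\SigEst$, $\Qmat(\vvec)$ and $\DelHat$ are all functions of the same sample and no new event is introduced. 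So the proof is a deterministic algebraic manipulation conditioned on the event of Theorem~\ref{ThmConfInt}, and I would organize it as: (i) reduce to the deterministic $\vvec$; (ii) split off $\uvec^T\Dmat\DelHat$; (iii) rewrite the main variance term; (iv) rewrite $\Term_1$; (v) bound $\Term_2$; (vi) collect all $\specnorm{\Dmat}$-multiplied pieces into $\Term_4$.
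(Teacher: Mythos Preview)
Your approach is correct and essentially identical to the paper's: apply Theorem~\ref{ThmConfInt} with the deterministic choice $\vvec = \GamMat^{-T}\uvec$, then use the triangle inequality together with the identity $\GamMat^{-1} - \GamHat^{-1} = \Dmat\,\GamHat^{-1}$ (equivalently $\GamMat^{-T} - \GamHat^{-T} = \GamHat^{-T}\Dmat^T$) to replace every $\GamMat^{-1}$ by $\GamHat^{-1}$, collecting the $\specnorm{\Dmat}$-weighted remainders into $\Term_4$. Your confusion in step (iv) about $\specnorm{\GamHat}$ versus $\specnorm{\GamHat^{-1}}$, and in step (v) about the missing square root on $\specnorm{\Qmat(\vvec)}$, is justified: both appear to be typos in the stated corollary (the natural bounds yield $\specnorm{\GamHat^{-1}}$ and $\sqrt{\specnorm{\Qmat(\vvec)}}$), and the paper's own proof contains the same inconsistencies, so you should not try to force your algebra to reproduce them.
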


\noindent See~\Cref{AppProofCorRefineConfInt} for the proof. \\

Note that $\Term_3$ is a data-dependent quantity that can be
computed,\footnote{We assume here that the bound $\bou$ is known.}
whereas the additional error term $\Term_4$ involves $\DelHat$ and
$\Dmat$, and so cannot be computed.  However, we can argue that the
error term $\Term_4$ is lower order under relatively mild conditions.
The dominant component of $\Term_2$ is the quantity $\sqrt{\numobs}
\specnorm{\Dmat} \|\DelHat\|_2$.  In terms of its scaling with the
pair $(\numobs, \usedim)$:
\begin{itemize}
\item standard random matrix theory arguments (e.g., Chapter 5 in the
  book~\cite{dinosaur2019}) ensure that, under mild tail conditions on
  the pair $(\instrument, \covariate)$, we should have
  $\specnorm{\Dmat} \precsim \sqrt{\frac{\usedim}{\numobs}}$ with high
  probability.
\item from our results in the previous section, we typically have the
  bound $\|\DelHat\|_2 \precsim \sqrt{\frac{\usedim}{\numobs}}$ with
  high probability.
\end{itemize}
Putting together the pieces, we find the scaling
\begin{align*}
\Term_4 \asymp \sqrt{\numobs} \sqrt{\frac{\usedim}{\numobs}} \;
\sqrt{\frac{\usedim}{\numobs}} = \frac{\usedim}{\sqrt{\numobs}}.
\end{align*}
Consequently, it suffices to have $\usedim^2/\numobs \rightarrow 0$ in
order for $\Term_4$ to be lower order relative to $\Term_3$.


\subsubsection{Weakening dimension dependence}

\Cref{CorScalarConfInt} imposes a relatively stringent
$\frac{\usedim^2}{\numobs} \rightarrow 0$ condition to be valid. Here
we state a result that weakens this guarantee, albeit at the cost of
introducing larger pre-factors (i.e. $r_\pardelta$). Our result is
especially suited to the problem of uniformly controlling the
deviations $\sqrt{\numobs} |\langle u, \DelHat \rangle |$ uniformly
for all vectors $u$ in some finite set $\mathcal{U}$.  A standard
example would be $\mathcal{U} = \{e_1, \ldots, e_\usedim \}$,
corresponding to the set of standard basis vectors.

We begin by describing a bound that is valid for a single fixed vector
$u$.  Choose some $\GamErr$ such that
\begin{align*}
\twonorm{\GamHat^{-1}\uvec - \Gam^{-1}u} \leq \GamErr
\end{align*}
with probability exceeding $1 - \pardelta'$. Let $\CanBasis \defn
\{e_1, -e_1, \ldots, e_d, -e_d\}$ denote the set of canonical basis
vectors and their negations, and define
\begin{align*}
\Term_5 \defn \sup_{w \in \CanBasis} \twonorm{\Gam^{-1} u + \lambda
  w}, \qquad \text{and} \qquad \Term_6 \defn \sup_{w \in \CanBasis}
\sqrt{\specnorm{\Qmat(\Gam^{-1}u + \lambda w)}}.
\end{align*}
With this notation, we have the following guarantee:
\begin{corollary}
\label{cor:uniform-CI}
Under the conditions of~\Cref{ThmConfInt}, we have
\begin{align*}
\sqrt{\numobs} | \langle \uvec, \DelHat \rangle | \leq r_{\delta /
  2\usedim} \left\{ \sqrt{\uvec^T \GamHat^{-T} \SigEst
  \GamHat^{-1}\uvec} + \bound\Term_5 \cdot
\sqrt{\frac{8\log(2\usedim/\pardelta')}{\numobs-1}} + 2\GamErr
\specnorm{\SigEst} + \Term_6 \twonorm{\DelHat} \right\}
\end{align*}
with probability exceeding $1 - \pardelta - \pardelta' -
\frac{cm_3}{\sqrt{\numobs}}$.
\end{corollary}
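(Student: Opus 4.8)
The argument rests on the exact representation of the IV error. Since $\betahat=\GamHat^{-1}\big(\tfrac{1}{\numobs}\sum_i\instrument_i\response_i\big)$ and $\response_i=\inprod{\covariate_i}{\betastar}+\noise_i$, we have $\GamHat\DelHat=\tfrac{1}{\numobs}\sum_i\instrument_i\noise_i$, so that $\inprod{\vvec}{\GamHat\DelHat}=\tfrac{1}{\numobs}\sum_i\inprod{\vvec}{\instrument_i}\noise_i$ for every $\vvec$, and in particular $\inprod{\uvec}{\DelHat}=\inprod{\vhat}{\GamHat\DelHat}$ with $\vhat\defn\GamHat^{-T}\uvec$. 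We would like to apply \Cref{ThmConfInt} with $\vvec=\vhat$, but $\vhat$ is data-dependent; its deterministic surrogate is $\vvec_0\defn\Gam^{-T}\uvec$, and the property defining $\GamErr$ is exactly that $\twonorm{\vhat-\vvec_0}\le\GamErr$ on an event $\mathcal{E}_0$ with $\Prob(\mathcal{E}_0)\ge 1-\pardelta'$.

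The plan is to localize $\vhat$ to a small polytope of \emph{deterministic} vertices and invoke \Cref{ThmConfInt} at each of them. On $\mathcal{E}_0$ we have $\vhat\in B(\vvec_0,\GamErr)$; choosing the inflation $\lambda$ large enough (up to a $\sqrt{\usedim}$ factor) that $B(\vvec_0,\GamErr)\subseteq P\defn\mathrm{conv}\{\vvec_0+\lambda w:w\in\CanBasis\}$, we may write $\vhat=\sum_{w\in\CanBasis}\mu_w\vvec_w$ as a convex combination of the $2\usedim$ deterministic vertices $\vvec_w\defn\vvec_0+\lambda w$. Applying \Cref{ThmConfInt} to each $\vvec_w$ --- legitimate precisely because each is non-random --- with the error budgets $\pardelta/2\usedim$ and $\pardelta'/2\usedim$ in place of $\pardelta,\pardelta'$ produces the quantile $\quantile{\pardelta/2\usedim}$ and the logarithm $\log(2\usedim/\pardelta')$ that appear in the statement.

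Recombination is then bookkeeping. By linearity of $\inprod{\cdot}{\GamHat\DelHat}$ and the triangle inequality, $\sqrt{\numobs}\abs{\inprod{\uvec}{\DelHat}}\le\sum_w\mu_w\sqrt{\numobs}\abs{\inprod{\vvec_w}{\GamHat\DelHat}}$, and bounding each summand by the per-vertex conclusion of \Cref{ThmConfInt} gives an upper bound equal to $\quantile{\pardelta/2\usedim}$ times $\max_{w\in\CanBasis}\big\{\sqrt{\vvec_w^T\SigEst\vvec_w}+\bound\twonorm{\vvec_w}\sqrt{8\log(2\usedim/\pardelta')/(\numobs-1)}+\sqrt{\DelHat^T\Qmat(\vvec_w)\DelHat}\big\}$. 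Here $\max_w\twonorm{\vvec_w}=\Term_5$ and $\max_w\sqrt{\specnorm{\Qmat(\vvec_w)}}=\Term_6$ by definition, so the second and third terms are dominated by $\bound\Term_5\sqrt{8\log(2\usedim/\pardelta')/(\numobs-1)}$ and $\Term_6\twonorm{\DelHat}$. For the leading term, the reverse triangle inequality for the seminorm $\vvec\mapsto\sqrt{\vvec^T\SigEst\vvec}$, its $\specnorm{\SigEst^{1/2}}$-Lipschitz continuity, and $\twonorm{\vvec_w-\vhat}$ being of order $\GamErr$ (by choice of $\lambda$) give, after collecting constants, $\max_w\sqrt{\vvec_w^T\SigEst\vvec_w}\le\sqrt{\uvec^T\GamHat^{-T}\SigEst\GamHat^{-1}\uvec}+2\GamErr\specnorm{\SigEst}$, which are the first two terms inside the braces.

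It remains to assemble the probabilities. The union bound over the $2\usedim$ vertices converts the $\pardelta/2\usedim$ and $\pardelta'/2\usedim$ budgets back into $\pardelta$ and $\pardelta'$, and $\mathcal{E}_0$ is absorbed into the latter. The delicate point --- and the main obstacle --- is that the Berry--Esseen term $c m_3/\sqrt{\numobs}$ must \emph{not} pick up a factor $2\usedim$; this forces one to open up the proof of \Cref{ThmConfInt} and observe that this term originates from a single multivariate Gaussian-approximation step for the vector $\tfrac{1}{\sqrt{\numobs}}\sum_i\SigMat^{-1/2}\noise_i\instrument_i$, which simultaneously controls all one-dimensional projections $\vvec\mapsto\tfrac{1}{\sqrt{\numobs}}\sum_i\inprod{\vvec}{\instrument_i}\noise_i$ and hence all $2\usedim$ vertices at once, so it is counted only once; only the direction-specific Gaussian-quantile and $\bound$-concentration events genuinely need splitting. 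Collecting yields probability at least $1-\pardelta-\pardelta'-cm_3/\sqrt{\numobs}$. The payoff over \Cref{CorRefineConfInt} is then visible in the final bound: the randomness of $\GamHat$ now enters only through the single-vector perturbation $\GamErr$ (multiplied by the modest factor $\specnorm{\SigEst}$, and through the harmless inflations $\Term_5,\Term_6$) rather than through the full operator norm $\specnorm{\Dmat}$ multiplied by $\sqrt{\numobs}\twonorm{\DelHat}$.
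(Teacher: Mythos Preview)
Your proposal is correct and follows essentially the same route as the paper: localize $\GamHat^{-T}\uvec$ to the $\ell_1$-ball (cross-polytope) centered at $\Gam^{-T}\uvec$, reduce to the $2\usedim$ deterministic vertices (you via convex combination and the triangle inequality, the paper via the equivalent extremal-point characterization of a linear program), apply the multivariate Berry--Esseen bound \emph{once} to the convex event $\{\max_w|\inprod{\vvec_w}{\cdot}|\le\epsilon\}$ so that $cm_3/\sqrt{\numobs}$ is not multiplied by $2\usedim$, and union-bound only the Gaussian quantiles and the Maurer--Pontil concentration over the vertices. Your diagnosis of why the Berry--Esseen term must be handled jointly, and your recombination of the three terms into $\Term_5$, $\Term_6$, and the $\SigEst$-seminorm perturbation, match the paper's argument step for step.
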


\noindent See~\Cref{AppProofCorUnifInt} for the proof. \\

We claim that in typical settings, the the leading order term in this
bound will be the quantity $\sqrt{\uvec^T \GamHat^{-T} \SigEst
  \GamHat^{-1}\uvec}$ as long as $\frac{\usedim
  \log(\usedim)}{\numobs} \rightarrow 0$.  In the strong instrument
setting and typical tail conditions, standard concentration arguments
ensure that $\lambda = O(\sqrt{\frac{\usedim}{\numobs}})$; moreover,
since $\specnorm{\Qmat}$ is a bounded random variable, we have
$\Term_6 = O(\sqrt{\log \usedim})$.  Since $\twonorm{\DelHat} =
O(\sqrt{\usedim}{\numobs})$, the claim follows as long as
$\frac{\usedim \log(\usedim)}{\numobs} \rightarrow 0$.  Thus, we have
obtained a more favorable dependence on $d$ than our earlier result.

However, this relaxation comes at the cost of $r_{\pardelta/2\usedim}$
instead of $r_{\pardelta}$, and straightforward calculations show that
$r_{\pardelta/2d} \asymp \log(\usedim/\pardelta)$. Although this log
factor is undesirable, it is unavoidable in situations when we want to
have uniform confidence intervals over multiple different $u_i$'s. We
can apply a union bound over~\Cref{cor:uniform-CI} to construct a
uniform confidence region for $\sqrt{\numobs}| \langle u_i, \DelHat
\rangle |$ for $i = 1, \ldots, k$ and the constant pre-factor would be
$r_{\pardelta/(2dk)} \asymp \log(\usedim k / \pardelta)$ instead. As
an example, if we wanted uniform confidence interval over all entries
(in which case $u_i = e_i$ for $i = 1, \ldots, d$), then the
pre-factor becomes
\begin{align*}
r_{\delta/\usedim^2} \asymp \log(\usedim^2/\pardelta) \asymp \log(\usedim/\pardelta).
\end{align*}


\subsubsection{Specialization to univariate setting}

We now specialize to the univariate ($\usedim = 1$) setting, in which
case we can give more precise results that exhibit interesting
behavior in the \emph{weak instrument} regime discussed at the end
of~\Cref{SecStandIV}.  In particular, for weak instruments, the error
term $\Term_2$ in~\Cref{ThmConfInt} need not be negligible.
Incorporating its effects leads to finite-sample corrections to
standard CIs. At an intuitive level, our guarantee in~\Cref{CorScalarConfInt}
is valid under the distributional assumption that
\begin{align*}
\frac{1}{\sqrt{\numobs}} \sum_{i=1}^\numobs \instrument_i \noise_i \approx \mathcal{N}(0, \sigma^2),
\end{align*}
i.e.~that the distribution of $\frac{1}{\sqrt{\numobs}} \sum_{i=1}^\numobs \instrument_i \noise_i$ is approximately normal with some variance. Unlike the standard weak instrument literature,
it makes no assumptions on the distribution of $\frac{1}{\numobs}\sum_{i=1}^\numobs \instrument_i \covariate_i$.

We begin by stating a precise refinement of~\Cref{ThmConfInt} for a
univariate problem.  It involves the following coefficient:
\begin{align}
\label{EqnDefnScalarKap}  
\kappa_\numobs &= \frac{1}{\sqrt{\numobs}} \; \frac{
  \sqrt{\tfrac{1}{\numobs(\numobs-1)} \sum_{i < j} (\instrument_i
    \covariate_i - \instrument_j \covariate_j)^2}}{
  \big|\frac{1}{\numobs} \sum_{i=1}^\numobs \instrument_i
  \covariate_i|} \; = \; \frac{1}{\sqrt{\numobs}}
\frac{|\Qmat(1)|}{|\GamHat|},
\end{align}
where we recall the definitions of the matrices $\GamHat$ and $\Qmat$
from equations~\eqref{EqnDefnGamma} and~\eqref{EqnDefnKapErr},
respectively. $\Qmat(1)$ is precisely the estimate of the covariance
of the sample $\{\instrument_i \covariate_i\}_{i=1}^\numobs$.

\begin{corollary}  
  \label{CorScalarConfInt}
  Under the assumptions of~\Cref{ThmConfInt}, in the univariate case
  $\usedim = 1$, the following statements hold, each with probability
  at least $1 - \pardelta - \pardelta' - \frac{c
    m_3}{\sqrt{\numobs}}$.
\begin{enumerate}
\item[(a)] If $\kappa_\numobs \quantile{\delta} < 1$, then
\begin{subequations}
  \begin{align}
\label{EqnCorrectedA}    
\sqrt{\numobs} \big| \paramhat - \paramstar \big| & \leq
\frac{\quantile{\delta}}{1 - \quantile{\delta} \mkap} \left( \sqrt{ \,
  \GamHat^{-1} \SigEst \GamHat^{-1}} + \frac{\bound}{\big|\GamHat
  \big|} \cdot \sqrt{\frac{8\log(1/\pardelta')}{\numobs - 1}}\right).
\end{align}
\item[(b)] If $\mkap \quantile{1-\delta} > 1$, then
  \begin{align}
\label{EqnCorrectedB}    
\sqrt{\numobs} \big | \paramhat - \paramstar \big| & \leq
\frac{\quantile{1-\delta}}{\kappa_\numobs \quantile{1-\delta} - 1}
\left( \sqrt{ \, \GamHat^{-1} \SigEst \GamHat^{-1}} +
\frac{\bound}{\big|\GamHat \big|} \cdot
\sqrt{\frac{8\log(1/\pardelta')}{\numobs - 1}} \right).
\end{align}
\end{subequations}
\end{enumerate}
\end{corollary}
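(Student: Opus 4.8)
The plan is to obtain both statements from~\Cref{ThmConfInt} specialized to $\usedim = 1$ and $\vvec = 1$, rewritten as a self-referential inequality for $\sqrt{\numobs}\,|\paramhat - \paramstar|$ and then solved. Write $\DelHat = \paramhat - \paramstar$ and $W_\numobs \defn \tfrac{1}{\sqrt{\numobs}}\sum_{i=1}^\numobs \instrument_i\noise_i$. The IV normal equations give $\GamHat\DelHat = \tfrac{1}{\numobs}\sum_i \instrument_i\noise_i$, hence $\sqrt{\numobs}\,|\GamHat\DelHat| = |W_\numobs|$ and $\sqrt{\numobs}\,|\DelHat| = |W_\numobs|/|\GamHat|$. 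In the scalar case $\errone_\numobs(\DelHat;1) = |\DelHat|\,\sqrt{\Qmat(1)}$, and comparing with the definition~\eqref{EqnDefnScalarKap} of $\mkap$ yields the identity $\errone_\numobs(\DelHat;1) = \mkap\,\sqrt{\numobs}\,|\GamHat\DelHat| = \mkap\,|W_\numobs|$. Finally $\sqrt{\vvec^T\SigEst\vvec} = \sqrt{\SigEst}$, $\|\vvec\|_2 = 1$, and $\sqrt{\GamHat^{-1}\SigEst\GamHat^{-1}} = \sqrt{\SigEst}/|\GamHat|$, so the parenthetical factor appearing in~\eqref{EqnCorrectedA}--\eqref{EqnCorrectedB} equals $|\GamHat|^{-1}\bigl(\sqrt{\SigEst} + \bound\sqrt{\tfrac{8\log(1/\pardelta')}{\numobs-1}}\,\bigr)$.

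For part~(a) I would apply~\Cref{ThmConfInt} with $\vvec = 1$. Substituting the identities above, its bound~\eqref{EqnConfInt} reads $|W_\numobs| \le \quantile{\delta}\bigl\{\sqrt{\SigEst} + \bound\sqrt{\tfrac{8\log(1/\pardelta')}{\numobs-1}} + \mkap\,|W_\numobs|\bigr\}$ with probability at least $1 - \pardelta - \pardelta' - cm_3/\sqrt{\numobs}$. Collecting the $|W_\numobs|$ terms gives $|W_\numobs|\,(1 - \quantile{\delta}\,\mkap) \le \quantile{\delta}\bigl(\sqrt{\SigEst} + \bound\sqrt{\tfrac{8\log(1/\pardelta')}{\numobs-1}}\,\bigr)$; when $\quantile{\delta}\,\mkap < 1$ the left-hand factor is strictly positive, so dividing through by it and then by $|\GamHat|$ produces exactly~\eqref{EqnCorrectedA}.

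For part~(b), the same manipulation would force a division by the negative quantity $1 - \quantile{1-\delta}\,\mkap$, so the upper bound of~\Cref{ThmConfInt} is the wrong inequality to invoke; instead I would establish the companion \emph{lower} bound $|W_\numobs| \ge \quantile{1-\delta}\bigl\{\errone_\numobs(\DelHat;1) - \sqrt{\SigEst} - \bound\sqrt{\tfrac{8\log(1/\pardelta')}{\numobs-1}}\,\bigr\}$, again with probability at least $1 - \pardelta - \pardelta' - cm_3/\sqrt{\numobs}$. This is proved by re-running the argument behind~\Cref{ThmConfInt} with two changes. First, the Berry--Esseen comparison between $W_\numobs/\sqrt{\SigMat}$ and $\Normal(0,1)$ is used in the lower tail: since $\Prob_{V\sim\Normal(0,1)}[\,|V| \le \quantile{1-\delta}\,] = \pardelta$, it gives $|W_\numobs| \ge \quantile{1-\delta}\sqrt{\SigMat}$ outside an event of probability at most $\pardelta + cm_3/\sqrt{\numobs}$. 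Second, $\sqrt{\SigMat}$ is bounded \emph{below}: using the residual decomposition $\noise_i = \noisehat_i + \inprod{\covariate_i}{\DelHat}$ and the reverse triangle inequality in the seminorm $f \mapsto \bigl(\tfrac{1}{\numobs-1}\sum_i f_i^2\instrument_i^2\bigr)^{1/2}$, together with the elementary identity $\tfrac{1}{\numobs-1}\sum_i \instrument_i^2\covariate_i^2 = \Qmat(1) + \tfrac{\numobs}{\numobs-1}\GamHat^2 \ge \Qmat(1)$, one sees that the $\inprod{\covariate_i}{\DelHat}$ part contributes at least $|\DelHat|\sqrt{\Qmat(1)} = \errone_\numobs(\DelHat;1)$, while the $O((\numobs-1)^{-1/2})$ slack from replacing $\SigMat$ by its empirical counterpart is controlled on a $(1-\pardelta')$-event and absorbed into the $\bound\sqrt{\tfrac{8\log(1/\pardelta')}{\numobs-1}}$ term. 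Granting the companion bound, substitute $\errone_\numobs(\DelHat;1) = \mkap\,|W_\numobs|$ and rearrange to $|W_\numobs|\,(\quantile{1-\delta}\,\mkap - 1) \le \quantile{1-\delta}\bigl(\sqrt{\SigEst} + \bound\sqrt{\tfrac{8\log(1/\pardelta')}{\numobs-1}}\,\bigr)$; when $\quantile{1-\delta}\,\mkap > 1$, dividing by the positive left factor and then by $|\GamHat|$ yields~\eqref{EqnCorrectedB}.

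The step I expect to be the main obstacle is the probability accounting in part~(b): the companion lower bound must carry probability at least $1 - \pardelta - \pardelta' - cm_3/\sqrt{\numobs}$, not the weaker $1 - 2\pardelta - \cdots$ that a naive ``use both tails'' argument would give. This works precisely because Berry--Esseen is a single uniform-in-$x$ comparison, so controlling the lower quantile $\quantile{1-\delta}$ is no more costly than controlling the upper quantile $\quantile{\delta}$ was inside~\Cref{ThmConfInt}, and because the residual correction needed in part~(b) is one-sided in the favorable direction --- its only algebraic content being that the discarded cross-term $\tfrac{\numobs}{\numobs-1}\GamHat^2$ is nonnegative. Every other estimate is exactly one already carried out in the proof of~\Cref{ThmConfInt}.
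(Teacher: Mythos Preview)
Your overall architecture matches the paper's proof exactly: part~(a) is obtained by specializing~\eqref{EqnConfInt} to $\usedim=1$, $\vvec=1$, identifying $\errone_\numobs(\DelHat;1)=\mkap\sqrt{\numobs}\,|\GamHat\DelHat|$, and solving the resulting self-referential inequality; part~(b) reruns the argument with the Berry--Esseen bound at the lower quantile $\quantile{1-\delta}$, the Maurer--Pontil inequality in the reverse direction, and the reverse triangle inequality on the residual decomposition, then rearranges. Your probability accounting discussion is also correct.

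There is, however, a genuine chaining gap in your sketch of part~(b). You propose to apply the reverse triangle inequality in the \emph{uncentered} seminorm $N(f)\defn\bigl(\tfrac{1}{\numobs-1}\sum_i f_i^2\instrument_i^2\bigr)^{1/2}$, obtaining $N(\noise)\ge\errone_\numobs(\DelHat;1)-\sqrt{\SigEst}$ after discarding the nonnegative cross-term $\tfrac{\numobs}{\numobs-1}\GamHat^{\,2}$. But the Maurer--Pontil step gives a lower bound on $\sqrt{\SigMat}$ in terms of the \emph{centered} (sample-variance) quantity $M(\noise)\defn\bigl(\tfrac{1}{\numobs(\numobs-1)}\sum_{i<j}(\noise_i\instrument_i-\noise_j\instrument_j)^2\bigr)^{1/2}$, and since $M(\noise)\le N(\noise)$, the two inequalities do not chain in the direction you need: from $\sqrt{\SigMat}\ge M(\noise)-\bound\sqrt{\tfrac{8\log(1/\pardelta')}{\numobs-1}}$ and $N(\noise)\ge\errone_\numobs-\sqrt{\SigEst}$ you cannot conclude $\sqrt{\SigMat}\ge\errone_\numobs-\sqrt{\SigEst}-\bound\sqrt{\cdots}$.

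The fix is simply to run the reverse triangle inequality in the centered seminorm $M$ itself---exactly what the paper does via the pairwise-difference decomposition $a_{ij}=b_{ij}+c_{ij}$, giving $\|a\|_2\ge\|c\|_2-\|b\|_2$. In that seminorm one has $M(\covariate\DelHat)=|\DelHat|\sqrt{\Qmat(1)}=\errone_\numobs(\DelHat;1)$ \emph{exactly} (no inequality needed), and $M(\noisehat)=\sqrt{\SigEst}$ exactly, because $\sum_j\noisehat_j\instrument_j=0$ by the IV normal equations you already invoked in part~(a). The ``elementary identity'' and the discarded cross-term then play no role.
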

\noindent See~\Cref{SecProofCorScalarConfInt} for the proof of this
corollary. \\

Let us make a few comments to interpret this result, beginning with
part (a).  The leading term $\quantile{\delta} \sqrt{ \, \GamHat^{-1}
  \SigEst \GamHat^{-1}}$ defines the boundary of the typical estimate
of a $1-\delta$ confidence interval.  (Recall that for a typical level
of $\delta = 0.05$, corresponding to a $95 \%$ CI, we have
$\quantile{\delta} \approx 1.96$.)  Up to the correction factor
$\frac{1}{1 - \quantile{\delta} \mkap}$ (and the higher order term),
part (a) guarantees that this interval used in practice is accurate.
From its definition~\eqref{EqnDefnScalarKap}, in the classical regime,
we expect that $\mkap = O_p(1/\sqrt{\numobs})$; this scaling follows
since both the numerator $\Qmat(1)$ and denominator $|\GamHat|$
converge to order one quantities.  Consequently, as long as $\mkap$ is
sufficiently small, part (a) certifies that the classical confidence
interval is valid.  Note that $\mkap$ can be computed from the data,
so that this validity can be verified by the user.

\Cref{CorScalarConfInt} also provides guidance in the regime of
\emph{weak instruments}, in which case $\mkap$ is a random coefficient
of order one.  Concretely, consider ensembles of problems based on
$\numobs$ i.i.d. samples from the generative model~\eqref{EqnEndo} for
the covariate $\covariate$, level of endogeneity $\strendo = 1$, and
additional noise level $\plainstd = 0$.  We obtain weak instruments by
setting $\strinst = \strinst_1/\sqrt{\numobs}$ for some $\strinst_1 >
0$, where the instrument is chosen as $\instrument \in \{-1, +1 \}$
equiprobably.  With these choices, we have
\begin{align}
\label{EqnMyWeak}  
  \GamMat & = \Exs[\covariate \instrument] =
  \frac{\strinst}{\sqrt{\numobs}} \Exs[\instrument^2] =
  \frac{\strinst}{\sqrt{\numobs}},
\end{align}
so that we are in the weak instrument regime.  Let us compute the form
of $\mkap$ for ensembles with parameters $\plainstd = 0$ and $\strendo
= 1$ in equation~\eqref{EqnEndo}.  Noting that $\instrument_i
\covariate_i = \frac{\strinst_1}{\sqrt{\numobs}} + \instrument_i
\noise_i$, a little calculation yields
\begin{align*}
\mkap & = \frac{\sqrt{\frac{1}{\numobs ( \numobs-1)} \sum_{i < j}
    (\instrument_i \noise_i - \instrument_j \noise_j)^2}}{ \big|
  \strinst_1 + V_\numobs \big|} \qquad \mbox{where $V_\numobs \defn
  \frac{1}{\sqrt{\numobs}} \sum_{i=1}^\numobs \instrument_i
  \noise_i$.}
\end{align*}
For large $\numobs$, the numerator converges to $\var(\instrument
\noise) = 1$, whereas $V_\numobs \rightsquigarrow N(0, 1)$, so that
$\mkap$ behaves like $1/|U(\strinst_1)|$, where $U(\strinst_1) \sim N(
\strinst_1, 1)$.  In this regime, the correction factor $(1 -
\quantile{\delta} \mkap)^{-1}$ in~\Cref{CorScalarConfInt}(a) can play
a significant role.  We explore this phenomenon further
in~\Cref{SecKappaSims}.

Finally, observe that in the regime of large $\mkap$, part (b)
guarantees that we can substantially \emph{shrink} the classical
confidence interval.  In~\Cref{SecKappaSims}, we also construct an
ensemble---involving a dependent sequence of covariates
$\{\covariate_i \}_{i=1}^\numobs$, as permitted by the generality of
our theory---for which $\mkap$ is large with high probability.  In
this regime, we can produce CIs that are substantially smaller than
the classical prediction.


\section{Applied uses of confidence intervals}
\label{SecApplications}

In this section, we explore the applied uses of our confidence
intervals (CIs).  We begin in~\Cref{SecKappaSims} with a numerical
study of the corrected CIs from~\Cref{CorScalarConfInt} on simulated
data; in~\Cref{sec:empirical}, we turn to an application of IV methods
to study the effect of air pollution (PM2.5 levels) on various health
outcomes, using an original dataset compiled by the authors.


\subsection{Numerical study of corrected confidence intervals}
\label{SecKappaSims}

In this section, we undertake a numerical study of the corrected CIs
given in~\Cref{CorScalarConfInt}.  Of particular to interest is to
explore the effect of $\mkap$, as defined in
equation~\eqref{EqnDefnScalarKap}, that determines the correction
applied to the classical CIs based only on the sandwich estimator.
All experiments were run with $\delta = 0.05$, corresponding to
a CI with nominal coverage of $95\%$.

\paragraph{Corrected CIs for small $\mkap$:}

We begin by exploring the effect of the correction factor
$\frac{\quantile{\delta}}{1 - \quantile{\delta} \mkap}$ applied in
part (a) of~\Cref{CorScalarConfInt}.  
\begin{figure}[ht!]
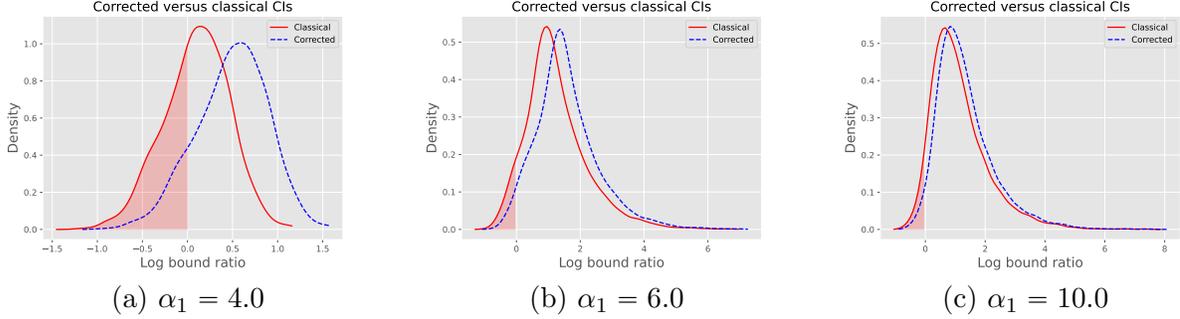

  \begin{center}
    \begin{tabular}{ccc}
      \widgraph{0.33\textwidth}{\figdir/fig_kappa_weak4_n256_numtrials10000_final}
      &
      \widgraph{0.33\textwidth}{\figdir/fig_kappa_medium_n256_numtrials10000_final}
      &
      \widgraph{0.33\textwidth}{\figdir/fig_kappa_strong_n256_numtrials10000_final}
      \\
      (a) $\strinst_1 = 4.0$ & (b) $\strinst_1 = 6.0$ & (c)
      $\strinst_1 = 10.0$
    \end{tabular}
    \caption{Study of the bounds
      in~\Cref{CorScalarConfInt}(a). Experiments with sample size
      $\numobs = 256$ with weak instrument scaling depending on the
      instrument strength $\strinst \in \{4.0, 6.0, 10.0 \}$ in panels
      (a), (b) and (c), respectively.  Density estimates of the log
      ratio between the classical CI estimate and the empirical IV
      error (red solid line) and the corrected CIs and the empirical
      IV error (blue dashed line).}
    \label{FigCorrectedCI}
  \end{center}
\end{figure}
With sample size $\numobs = 256$ and noise parameters $(\strendo,
\plainstd) = (1, 0)$, we generated IV problems with weak instruments
using the procedure described in equation~\eqref{EqnMyWeak}.  We
considered three different instrument strengths---namely, $\strinst =
\strinst_1/\sqrt{\numobs}$ for $\strinst_1 \in \{4.0, 6.0, 10.0\}$.
For each setting of the parameters, we performed $M = 10,000$ Monte
Carlo trials, computing the log ratio of either the classical CI to
the true IV error, or the corrected CI from
equation~\eqref{EqnCorrectedA} to the true IV error.  The classical
CIs exhibited under coverage for instrument strengths $\strinst \in
\{4.0, 6.0 \}$, with actual estimated coverages at $92\%$ and $93 \%$,
respectively.  We computed the corrected CIs only when they were
applicable according to the criteria~\eqref{EqnCorrectedA}; the bounds
were applicable in 19\%, 87\% and 100\% of trials for the settings
$\strinst_1 \in \{4.0, 6.0, 10.0 \}$, respectively.

\Cref{FigCorrectedCI} shows density plots of these log ratios (with
densities estimated using a standard Gaussian KDE).  The classical CI
ratios are plotted in red solid lines, with the corrected ones plotted
in blue dashed lines; the shaded red area corresponds to the fraction
of Monte Carlo trials for which the classical CIs were violated when
the bound applied.

\paragraph{Corrected CIs for large $\mkap$:}

Part (b) of~\Cref{CorScalarConfInt} shows that the classical CIs can
be overly large in problems for which the coefficient $\mkap$ is quite
large, so that the condition $\mkap \quantile{1-\delta} > 1$ applies.
(With our setting $\delta = 0.05$, we have $\quantile{1-\delta} =
0.06$, so that we need $\mkap \gg 1$.)  From the
definition~\eqref{EqnDefnScalarKap} of $\mkap$, it can be seen that
large values of $\mkap$ are not likely to occur when the pairs
$(\instrument, \covariate)$ are independent and identically
distributed.  However, recall that our theory imposes \emph{only}
distributional assumptions on the sequence $\{\instrument_i \noise_i
\}_{i=1}^\numobs$.  There are \emph{no} distributional assumptions on
the covariates $\{\covariate_i \}_{i=1}^\numobs$, and this freedom can
be exploited so as to construct ensembles for which $\mkap$ is large
with high probability.

\begin{figure}[h!]
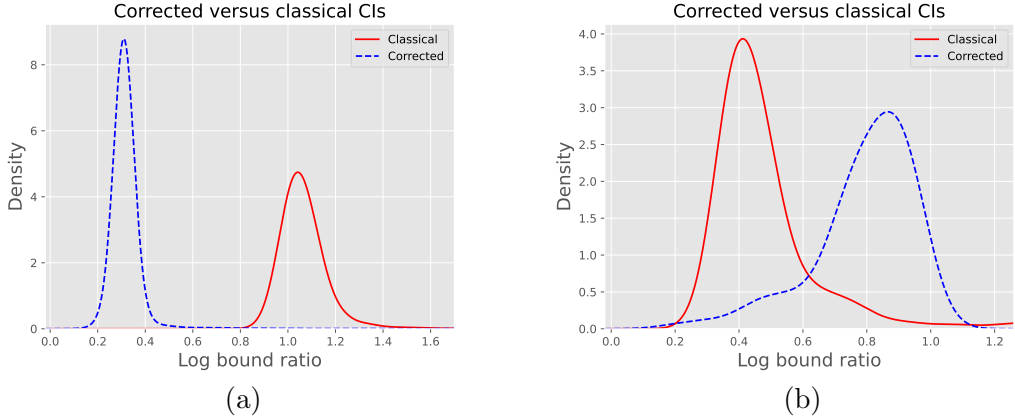

  \begin{center}
    \begin{tabular}{cc}
      \widgraph{0.45\textwidth}{\figdir/fig_badkap_weak_n256_numtrials10000_final}
      &
      \widgraph{0.45\textwidth}{\figdir/fig_badkap_strong_n256_numtrials10000_final}
      \\
      (a) & (b)
    \end{tabular}
    \caption{ Density estimates of the log ratio between the classical
      CI estimate and the empirical IV error (red solid line) and the
      $\mkap$-corrected CIs and the empirical IV error (blue dashed
      line), computed over $M = 10000$ trials.  (a) Instrument
      strength $\strinst_1 = 0.25$.  (b) Instrument strength
      $\strinst_1 = 0.75$.}
    \label{FigBadKap}
  \end{center}
\end{figure}

In particular, suppose that we generate covariates $X$ from the
ensemble~\eqref{EqnEndo} with endogeneity $\strendo = 0.10$ and
additional noise level $\plainstd = 10.0$, where the additional noise
vector $W$ is constructed as follows:
\begin{align*}
  W_i = \instrument_i G_i \qquad \mbox{for $i = 1, \ldots, \numobs$}
\end{align*}
where $G \in \real^\numobs$ is a zero-mean Gaussian random vector such
that $\sum_{i=1}^\numobs G_i = 0$ with probability one.\footnote{ In
particular, the vector $G$ has entries with variance $\var(G_i) = 1$
and \mbox{$\cov(G_i, G_j) = -1/(\numobs-1)$} for $i \neq j$.} Since
$Z_i \in \{-1, +1 \}$, this construction ensures that
$\sum_{i=1}^\numobs Z_i W_i = \sum_{i=1}^\numobs G_i = 0$, so that the
addition of $W$ to $\covariate$ has no effect on $\sum_{i=1}^\numobs
Z_i X_i$.  On the other hand, adding $W$ does affect the numerator
$\sum_{i < j} (Z_i X_i - Z_j X_j)^2$ in the definition of $\mkap$, in
particular making it larger.

We conducted experiments using this ensemble with sample size
$\numobs$, and two settings of the instrument strength $\strinst =
\strinst_1/\sqrt{\numobs}$---namely, $\strinst_1 \in \{0.25, 0.75\}$.
As before, we plotted the log ratio between the CI limits, either the
classical one or the corrected using part (b)
of~\Cref{CorScalarConfInt}.  \Cref{FigBadKap} shows density estimates
of these log ratios based on $M = 10,000$ trials.  respectively.  With
the weaker instrument $\strinst_1 = 0.25$ shown in panel (a), the
inverse of the correction factor $\frac{\quantile{1-\delta}}{\mkap
  \quantile{1-\delta} - 1}$ can be quite large, often around a factor
of $10$ (or $1$ on the log scale).  This effect dissipates for the
somewhat stronger instrument level $\strinst_1 = 0.75$ shown in panel
(b), and for very strong instruments, it is highly unlikely for
$\mkap$ to be particularly large.


\subsection{IV study of PM2.5 exposure}
\label{sec:empirical}

The conclusions drawn from this section are based on an original
dataset compiled by the authors. The data is organized into census
tracts, i.e. each census tract counts as a particular observation
within our dataset. The motivating factor to consider data at a
community level, as opposed to a individual level (the standard
approach in similar studies), is due to the various publicly
accessible datasets that provide community-level measurements. We are
interested in estimating the effect of \pollm~exposure on various
health metrics of the community. The response $\response$ is taken
from the PLACES project~\cite{placesdata} which consists of prevalence
rates of various negative health conditions including arthritis,
asthma, blood pressure, cancer, heart disease, and stroke. measured
between 2019-2020. We report our estimates of the effect of pollution
for a variety of these health conditions. Here, the endogenous
covariate of interest $\covariate$ is the average daily \pollm~levels
for each tract measured over the course of 2016, taken from the Public
Health Tracking Network~\cite{pm25data}. The exogeneous covariates
$\exogcov$ include various other measurement for the census tract,
such as race and demographics, educational attainment, employment by
sector, median income, poverty rates, health insurance rates,
indicators for states, and more, collected primarily from the 2020 US
Census~\cite{censusdata}. Our data set is summarized in
Table~\ref{table:summary}. We provide a visualization of the average
\pollm levels in the continental United States in 2016 at the
resolution of individual counties in Figure~\ref{fig:pollution}.

\begin{table*}
\caption{Summary statistics by census tracts}
\label{table:summary}
\begin{center}
\renewcommand{\arraystretch}{1.15}
\begin{tabular}{ l @{\qquad \qquad \qquad} c @{\qquad \qquad} c @{\quad \qquad} c }
\hline \hline & Average & SD & USA (2020) \\ \hline Population & 4002
& 1629 & \\ Median household income & \$ 68,600 & \$34,600 & \$74.580
\\ Male & 49\% & 5\% & 50\% \\ Non-white & 30\% & 26\% & 28\% \\ Under
18 & 22\% & 7\% & 22\% \\ Over 67 & 15\% & 8\% & 17\% \\ Poverty rate
& 14\% & 11\% & 11\% \\ Associates degree or higher & 39\% & 19\% &
35\% \\ Uninsurance rate & 9\% & 7\% & 11\% \\ \hline \hline
\end{tabular}
\subcaption*{Our dataset consists of 58,761 census tracts treated as
  individual units. The first two reflects the average and standard
  deviation of each of the measurements indicated above across all the
  census tracts. The third column represents the actual average across
  the United States, to indicate that our data is a representative
  sample of the United States. Since the US Census data is often
  reported as number of people within a census tract that falls into a
  specific group, we have decided to use the fraction/percentage to
  standardize the tracts. }
\end{center}
\caption{Instrumental variable estimate of effects of pollution on
  negative health outcomes incidence rates and their standard
  errors. \\}
\label{table:iv-results}
\begin{center}
\begin{tabular}{l @{ \qquad}  @{\qquad} c @{\qquad}  @{\qquad} c @{\qquad} }
\hline \hline & OLS & IV \\ 
\hline \multirow{2}{*}{Arthritis} & -1.9 & 12.2 \\ & (0.0) & (1.3) \\ 
[7pt] \multirow{2}{*}{Asthma} & -0.5 & 5.2 \\ & (0.0) & (0.5) \\ 
[7pt] \multirow{2}{*}{Cancer} & 0.0 & 1.3 \\ & (0.0) & (0.3) \\ 
[7pt] \multirow{2}{*}{Heart disease} & -0.2 & 2.9 \\ & (0.0) & (0.4) \\ 
[7pt] \multirow{2}{*}{High blood pressure} & -0.8 & 1.7 \\ & (0.1) & (1.8) \\ 
[7pt] \multirow{2}{*}{Poor mental health} & -0.2 & 5.6 \\ & (0.0) & (0.7) \\ 
[7pt] \multirow{2}{*}{Poor sleep} & 0.5 & 0.1 \\ & (0.1) & (1.2) \\ [7pt]
\multirow{2}{*}{Stroke} & -0.1 & 1.5 \\ & (0.0) & (0.2) \\ \hline
\hline
\end{tabular}
\subcaption*{Results are reported in \% increase per 10 \pollm}
\end{center}
\end{table*}

\begin{figure}[t]
\begin{center}
\includegraphics[scale=0.35]{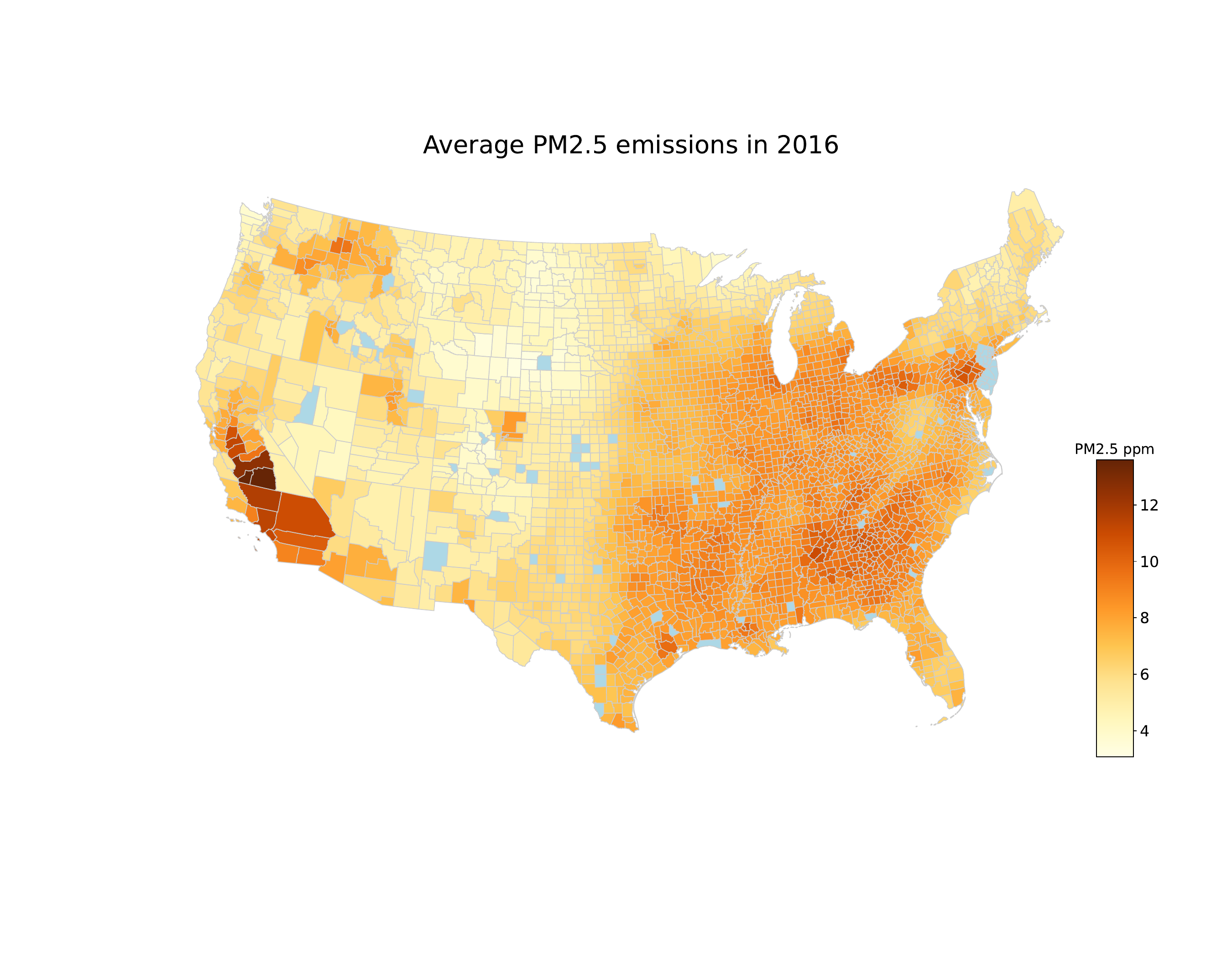}
\caption{Average \pollm~levels in the continental USA in 2016; the
darker the color the higher the fraction. Light blue indicates
missing data for those specific counties.}
\label{fig:pollution}
\end{center}
\end{figure}

The instrument $\instrument$ is a measure of the exposure of a given
census tract to pollution caused by wildfires. Abstractly, our
instrument is a gauge of a community's susceptibility to wildfire
smoke. We argue that this satisfies the exclusion restriction:
proximity to wildfires has well-documented effects on the day-to-day
pollution a specific community experiences, but is independent of
other causes of negative health outcomes. An individuals exposure to
wildfire pollution is dictated by geography and climate which, in the
modern era, has little effect on one's living and health conditions
(especially the ones of interest). Thus it serves as a natural
experiment by which we can analyze the effects of pollution on health
outcomes. To describe our specific instrument, let $\mathcal{D}$
denote a dataset consisting of all the wildfires in 2016 that burned
for at least 100 acres, as reported in the data given by the National
Interagency Fire Center~\cite{firedata}. We define our instrument as
\begin{align}
\label{eqn:inst-defn}
\instrument_i = \mathbf{1}\{ \instrument_i^* \geq c \}, \qquad
\text{where} \qquad \instrument^*_i = \frac{1}{\card{\mathcal{D}}}
\sum_{f \in \mathcal{D}} \frac{f_{\text{size}}}{(f_{\text{distance to
      $i$}})^2},
\end{align}
i.e. for every census tract $i$, we average across all the wildfires
$f \in \mathcal{D}$ in 2016 the ratio of the size of the fire over the
squared distance of the tract to the fire. Our final instrument then
is a truncated version of the above instrument, $\instrument_i =
\mathbf{1}\{\instrument_i^* \geq c\}$ for some constant $c$ chosen
such that a reasonable fraction of the total instruments are both $0$
and $1$. Figure~\ref{fig:wildfire} depicts the fraction of census
tracts within each county such that the wildfire smoke index was high,
i.e. $Z_i = 1$. With this instrument, our first stage $F$-statistic is
$256$, so our instrument is strong. In the appendix, we report
robustness checks using different thresholds and formulas and most of
the results remain consistent with those reported below.

\begin{figure}[t]
\begin{center}
\includegraphics[scale=0.35]{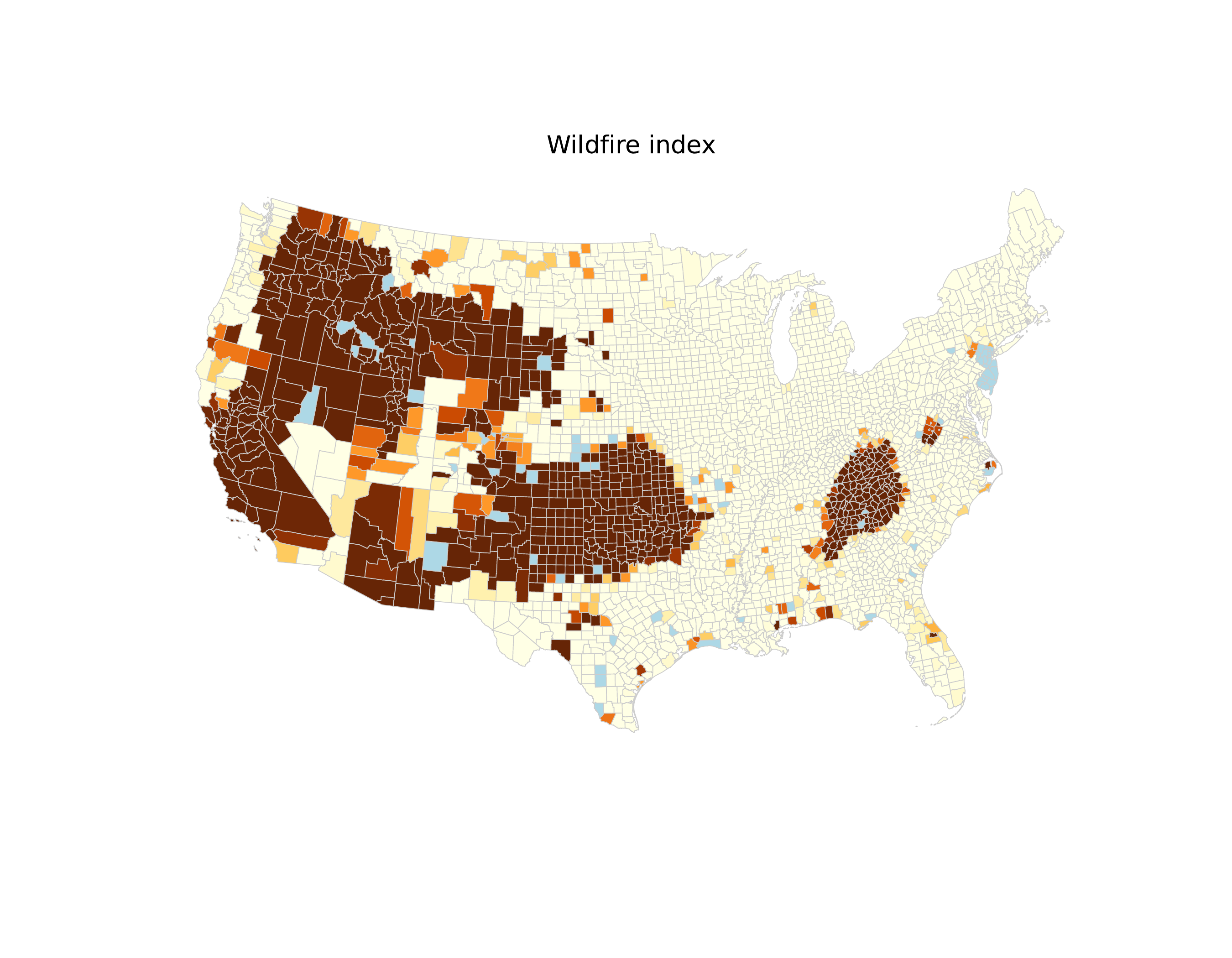}
\caption{Fraction of census tracts within each county such that
  \mbox{$Z_i = 1$;} the darker the color the higher the
  fraction. Light blue indicates missing data for those specific
  counties.}
\label{fig:wildfire}
\end{center}
\end{figure}

Our results are presented in Table~\ref{table:iv-results}. The
standard errors are robust standard errors, i.e.~computed via the
standard errors as proved in~\Cref{CorScalarConfInt}.  We remark that
because our units are geographic in nature, the standard errors may be
inaccurate due to spatial correlations that violate the independence
assumption required by our estimator. The reported numbers are
measured in \% per 10 \pollm, i.e. the increase in incidence rates due
to an increase in \pollm~concentration by 10. For reference, we also
included the results of running OLS on the same dataset. Note that in
every instance, OLS returns negative effect, indicating that an
increase in pollution decreases the rates of these negative health
outcomes. The instrumental variables regression indicates that
\pollm~pollution `causes' increases in the prevalence of these health
conditions. For example, an increase in an average \pollm~level of
$10$ in a given community results in a 3.3\% increase in heart disease
rates. Much of these results confirm existing association studies in
the medical literature, i.e. the relationship between pollution and
arthritis~\cite{adami2021association}, asthma~\cite{tiotiu2020impact},
cancer~\cite{turner2020outdoor}, heart disease~\cite{miller2020air},
and stroke~\cite{shah2015short}. We remark that we also included the
estimates of the effects of \pollm~on high blood pressure and poor
sleep to indicate that our choice of instrument does not return
significant effects for every possible response.


\section{Discussion}
\label{sec:discussion}

In this paper, we presented a non-asymptotic analysis of the classical
linear instrumental variable estimator in the
$\ell_2$-norm. Additionally, we established guarantees for estimating
the parameter of interest in the presence of exogeneous covariates. To
the best of our knowledge, these are the first instances of such
guarantees. We also establish, in the one-dimensional setting,
confidence intervals for the IV estimate, as well as a novel measure
for the strength of an instrument. In the strong-instrument setting
our confidence intervals matches the classical asymptotic ones up to
higher-order terms, but our measure of strength can be used to
appropriately adjust the asymptotic ones to account for potentially
weak instruments. Additionally, if the instrument is very weak our
guarantees can be used to substantially sharpen the asymptotic
confidence intervals.  Furthermore, we applied our results to analyze
the effect of air pollution on various negative health outcomes, using
wildfire exposure as an instrument. Our IV estimates indicates that
increased exposure to \pollm~pollution can increase ones risk of
various health conditions such as arthritis, asthma, cancer, heart
disease, poor mental health, and strokes.

Our results still leave open avenues of further investigation. The
non-asymptotic confidence intervals we have derived still tend to be
quite conservative or have stringent conditions that need to be met in
order to be used. To make them truly usable in practice there are
substantial improvements that would need to be made. Additionally, we
have not addressed the question of confidence intervals in the
presence of exogeneous covariates, which is the default setting in
practice. In this paper we also only focused on the just-identified
setting; when the number of instruments matches the number of
covariates. Often times in practice we may have more instruments than
we do covariates, known as the over-identified setting.  Finally,
instrumental variables are a special case of a broad class of
estimators known as generalized method of moments (GMMs). Establishing
non-asymptotic results for any of these more general settings is an
interesting direction for future work.


\bibliography{bibliography}
\bibliographystyle{amsalpha}


\appendix

\section{Proof of~\Cref{ThmStandardIV,ThmLinFuncIV}}
\label{SecProofNonAsymp}

In this appendix, we prove our two theorems that provide
non-asymptotic bounds on the IV estimate itself (\Cref{ThmStandardIV})
and linear mappings thereof (\Cref{ThmLinFuncIV}).  As noted following
the statement of~\Cref{ThmLinFuncIV}, it is actually more general
than~\Cref{ThmStandardIV}, so that it suffices to prove it.

\subsection{Main argument}
\label{sec:proof-i.n.i.d.-bounded}

Introduce the shorthand $W_\numobs \defn \empsum{i} \instrument_i
\noise_i$.  Using the form of the standard IV estimate, we can write
\begin{align*}
  \betahat - \betastar & = \GamHat^{-1} W_\numobs \; = \; \GamMat^{-1}
  W_\numobs + \Big( \GamHat^{-1} \GamMat - \IdMat_\dims \Big)
  \GamMat^{-1} W_\numobs.
\end{align*}
Applying $\Umat^T \in \real^{k \times \usedim}$ to both sides yields
\begin{align*}
\Umat^T (\betahat - \betastar) & = \; \Umat^T \GamMat^{-1} W_\numobs +
\Big[ \Umat^T \big( \GamHat^{-1} \GamMat - \IdMat_\dims \big) \Big]
\GamMat^{-1} W_\numobs.
\end{align*}
Consequently, by the triangle inequality, we have
\begin{align}
  \label{EqnUsefulBound}
\|\Umat^T(\betahat - \betastar)\|_2 & \leq \underbrace{\Big
  \|\empsum{i} \Umat^T \GamMat^{-1} \instrument_i \noise_i \Big
  \|_2}_{\Term_1} + \gamn{\GamMat; \Umat} \: \; \underbrace{\Big
  \|\empsum{i} \GamMat^{-1} \instrument_i \noise_i \Big
  \|_2}_{\Term_2}
\end{align}
  where $\gamn{\GamMat; \Umat} \defn \specnorm{\Umat^T( \GamHat^{-1}
    \GamMat - \IdMat_\dims)}$.

From this decomposition, we see that the central problem is
controlling the Euclidean norms of two sums of random variables.  In
order to do so, we make use of the following auxiliary result:

\begin{lemma}
\label{LemGD}
Let $\{ \randvec_i \}_{i=1}^\numobs$ be an independent sequence of
zero-mean random vectors such that $\twonorm{\randvec_i} \leq \Bou$
almost surely.  Then for any $\delta \in (0, 1)$, we have
\begin{align}
\label{EqnGDBound}  
\twonorm{\sum_{i=1}^\numobs \randvec_i} \leq
\EE\left[\twonorm{\sum_{i=1}^\numobs \randvec_i} \right] +
\sqrt{2
  \log(\tfrac{1}{\pardelta}) \EE \specnorm{\sum_{i=1}^\numobs
    \randvec_i \randvec_i^T}} + 3 \Bou \log(\tfrac{1}{\pardelta})
\end{align}
with probability at least $1 - \pardelta$.
\end{lemma}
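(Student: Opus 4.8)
The plan is to recognize the Euclidean norm on the left-hand side of~\eqref{EqnGDBound} as the supremum of a bounded, centered empirical process, and then to invoke a Talagrand-type functional Bernstein inequality. Write $S_\numobs \defn \sum_{i=1}^\numobs \randvec_i$ and let $\mathcal{V}$ be a countable dense subset of the Euclidean unit ball. Since the supremum of a linear functional over the unit ball equals the Euclidean norm,
\begin{align*}
\twonorm{S_\numobs} \; = \; \sup_{v \in \mathcal{V}} \; \sum_{i=1}^\numobs g_v(\randvec_i), \qquad \mbox{where} \quad g_v(x) \defn \inprod{v}{x}.
\end{align*}
Each coordinate function $g_v$ is centered, $\EE[g_v(\randvec_i)] = \inprod{v}{\EE \randvec_i} = 0$, and uniformly bounded, $|g_v(\randvec_i)| \leq \twonorm{v}\,\twonorm{\randvec_i} \leq \Bou$ almost surely by Cauchy--Schwarz. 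Thus $\twonorm{S_\numobs}$ is the supremum of an independent (but not necessarily identically distributed), centered empirical process whose summands are bounded by $\Bou$.

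The second ingredient is a bound on the \emph{weak variance} of this process. A direct computation gives
\begin{align*}
\sup_{v \in \mathcal{V}} \sum_{i=1}^\numobs \EE\big[g_v(\randvec_i)^2\big] \; = \; \sup_{\twonorm{v} \leq 1} v^T \Big( \sum_{i=1}^\numobs \EE[\randvec_i \randvec_i^T] \Big) v \; = \; \specnorm{\EE\Big[ \sum_{i=1}^\numobs \randvec_i \randvec_i^T \Big]} \; \leq \; \EE\,\specnorm{\sum_{i=1}^\numobs \randvec_i \randvec_i^T},
\end{align*}
where the last inequality uses convexity of the operator norm together with Jensen's inequality. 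With the boundedness constant $\Bou$ and this variance proxy identified, a one-sided functional Bernstein (Talagrand/Bousquet-type) concentration inequality for suprema of independent bounded processes --- see, e.g., the treatment of empirical-process concentration in~\cite{dinosaur2019} --- yields a tail bound of the form $\Prob\big[ \twonorm{S_\numobs} \geq \EE\twonorm{S_\numobs} + \sqrt{2 t\, \EE\specnorm{\sum_{i=1}^\numobs \randvec_i \randvec_i^T}} + c\,\Bou\, t \big] \leq e^{-t}$. Taking $t = \log(1/\pardelta)$ then reproduces~\eqref{EqnGDBound}.

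The one step requiring care is constant-tracking. In its sharpest form (Bousquet's inequality), the variance proxy entering Talagrand's inequality is not the weak variance alone but is inflated by an additive term proportional to $\Bou \cdot \EE\twonorm{S_\numobs}$; since~\eqref{EqnGDBound} carries coefficient exactly one in front of $\EE\twonorm{S_\numobs}$, I would instead invoke the variant of the inequality stated directly in terms of the weak variance (such a form is available via the entropy/tensorization method), and the relatively loose leading constant $3$ --- in place of the optimal $\tfrac{1}{3}$ --- is precisely the slack this costs. A secondary, entirely routine matter is measurability of the supremum, which is dispatched by the reduction to the countable index set $\mathcal{V}$ via separability of the unit ball.
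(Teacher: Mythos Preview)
Your approach is essentially the paper's: write $\twonorm{\sum_i \randvec_i}$ as the supremum of a centered, $\Bou$-bounded empirical process indexed by the unit sphere and apply a Talagrand-type deviation inequality. The only substantive difference is which inequality is invoked and how the variance proxy is computed. The paper applies the Klein--Rio version (Theorem~\ref{thm:klein-rio}), whose variance proxy is $\nu^2 = \EE\bigl[\sup_{v} \sum_i g_v(\randvec_i)^2\bigr]$ --- the expectation of the supremum, not the supremum of the expectations. For the linear class $g_v(x)=\inprod{v}{x}$ this quantity is \emph{exactly} $\EE\specnorm{\sum_i \randvec_i \randvec_i^T}$, so the term in~\eqref{EqnGDBound} falls out directly, with no Jensen step and no additive $\Bou\,\EE\twonorm{\sum_i \randvec_i}$ to absorb; the constant $3$ in front of $\Bou\log(1/\pardelta)$ is the Klein--Rio constant verbatim after inverting the tail bound $\exp\bigl(-t^2/(2\nu^2+3\Bou t)\bigr)$. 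Your detour through the weak variance $\sup_v\sum_i\EE[g_v(\randvec_i)^2]$ followed by Jensen is therefore unnecessary, and the ``variant stated directly in terms of the weak variance'' you reach for to avoid the Bousquet inflation term is not the right resolution: the clean fix is precisely the strong-variance (Klein--Rio) formulation, which already delivers~\eqref{EqnGDBound} with the stated constants.
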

\noindent See~\Cref{AppGD} for the proof. \\

\noindent We use this lemma to bound each of the two terms on the RHS
of equation~\eqref{EqnUsefulBound}.

\paragraph{Bounding the quantity $\Term_1$:}
We first apply~\Cref{LemGD} with $\randvec_i \defn \Umat^T \Gam^{-1}
(\noise_i \instrument_i)$ in order to bound the first term $\Term_1$.
From our set-up, each $\randvec_i$ is zero-mean and satisfies the
$\bou$-boundedness condition; moreover, the sequence is independent by
assumption.  Recalling the vector $\GaussVec_\numobs$ from
equation~\eqref{EqnDefnGaussVec}, we have $\sum_{i=1}^\numobs
\randvec_i = \sqrt{\numobs} \Umat^T \GaussVec_\numobs$.  As for the
term $\specnorm{\sum_{i=1}^\numobs \randvec_i \randvec_i^T}$, we have
\begin{align*}
  \sum_{i=1}^\numobs \randvec_i \randvec_i^T & = \Umat^T \GamMat^{-1}
  \Big( \frac{1}{\numobs} \sum_{i=1}^\numobs \noise_i^2 \instrument_i
  \instrument_i^T \Big) \GamMat^{-T} \Umat \; = \; \numobs \Umat^T
  \GamMat^{-1} \SigSpec \GamMat^{-T} \Umat,
\end{align*}
where the final equality uses the definition of $\SigSpec$ from
equation~\eqref{EqnDefnSigMat}.  Dividing by $\numobs$ and putting
together the pieces, we have shown that
\begin{subequations}
\begin{align}
\label{EqnTermOne}
  \Big \|\empsum{i} \Umat^T \GamMat^{-1} \instrument_i \noise_i \Big
  \|_2 & \leq \frac{\E \|\Umat^T
    \GaussVec_\numobs\|_2}{\sqrt{\numobs}} + \frac{1}{\sqrt{\numobs}}
  \sqrt{2 \log(\tfrac{1}{\pardelta}) \EE \specnorm{\Umat^T
      \GamHat^{-1} \SigSpec \GamHat^{-T} \Umat}} + \frac{3 \Bou
    \log(\tfrac{1}{\pardelta})}{\numobs}.
\end{align}


\paragraph{Bounding the quantity $\Term_2$:}  The previous argument
applies to a general $\Umat$ so that we can apply it with $\Umat =
\IdMat_\usedim$.  (Our assumptions imply that the random vectors
$\GamMat^{-1} \instrument_i \noise_i$ are also $\bou$-bounded.)
Consequently, by recourse to the bound~\eqref{EqnTermOne}, we can
conclude that
\begin{align}
\label{EqnTermTwo}
\Term_2 & \leq \frac{\E \|\GaussVec_\numobs\|_2}{\sqrt{\numobs}} +
\frac{1}{\sqrt{\numobs}} \sqrt{2 \log(\tfrac{1}{\pardelta}) \EE
  \specnorm{\GamHat^{-1} \SigSpec \GamHat^{-T}}} + \frac{3 \Bou
  \log(\tfrac{1}{\pardelta})}{\numobs}.
\end{align}
\end{subequations}

\vspace*{0.1in}

Finally, combining the two bounds~\eqref{EqnTermOne}
and~\eqref{EqnTermTwo} with our initial
decomposition~\eqref{EqnUsefulBound} yields the claim
of~\Cref{ThmLinFuncIV}.


\subsection{Proof of~\Cref{LemGD}}
\label{AppGD}

We control $\twonorm{\sum_{i=1}^\numobs \randvec_i}$ by representing
it as the supremum of an empirical process.  Note that we can write
\begin{align*}
\twonorm{\sum_{i=1}^\numobs \randvec_i} = \sup_{\|u\|_2 = 1}
\sum_{i=1}^\numobs \inprod{u}{\randvec_i} = \sup_{f_u \in \fclass}
\sum_{i=1}^\numobs f_u(\randvec_i),
\end{align*}
where we define $\fclass = \{f_u: f_u(x) = \langle u, x \rangle, \, u
\in \mathbb{S}^{\dims - 1} \}$, i.e.~$\fclass$ is the set of linear
functions with unit norm coefficient vectors.  With this set-up, we
perform the necessary calculations in order to apply a result due to
Klein and Rio (see~\Cref{thm:klein-rio} in~\Cref{AppEmpProcess}).

For all $f_u \in \fclass$, we have
\begin{align*}
\EE[f_u(\randvec_i)] = 0, \quad \text{and} \quad \|f_u\|_\infty =
\sup_{z} |f_u(V)| = |\langle u, V \rangle| \leq \Bou \quad a.s.,
\end{align*}
where we have used the assumption that the vectors $V = \noise
\Gam^{-1} Z$ are $\Bou$-uniformly bounded.

Additionally, we have
\begin{align*}
\nu \defn \EE\left[ \sup_{f_u \in \fclass} \sum_{i=1}^\numobs
  f^2(\randvec_i) \right] = \EE\left[ \sup_{\|u\|_2 = 1} \, u^T \big(
  \sum_{i=1}^\numobs \randvec_i \randvec_i^T \big) u \right] & = \EE
\specnorm{\sum_{i=1}^\numobs \randvec_i \randvec_i^T} \\
& = \Exs \bigspecnorm{ \Gam^{-1} \SigSpec \Gam^{-T}},
\end{align*}
where the final equality uses the definition of $V_i$ and
$\SigSpec$.  With these calculations in place,
applying~\Cref{thm:klein-rio} from~\Cref{AppEmpProcess} yields
\begin{align*}
\twonorm{\frac{1}{\numobs} \sum_{i=1}^\numobs \randvec_i} \leq
\EE\twonorm{\frac{1}{\numobs} \sum_{i=1}^\numobs \randvec_i} +
\frac{1}{\sqrt{\numobs}} \sqrt{2\log(\tfrac{1}{\pardelta})
  \EE\twonorm{\Gam^{-1} \SigSpec \Gam^{-T}}} + \frac{3 \Bou
  \log(\tfrac{1}{\pardelta})}{\numobs},
\end{align*}
with probability at least $1 - \pardelta$.

\subsection{Extensions to unbounded random variables}

In our current statements of~\Cref{ThmStandardIV,ThmLinFuncIV}, we
impose $\bou$-boundedness conditions on the random vectors
$\GamMat^{-1} \instrument_i \noise_i$.  These conditions allow us to
apply a concentration inequality for empirical processes due to
Bousquet with sharp constants.  However, the underlying proof
technique is not restricted to bounded random vectors; by leveraging
results on unbounded empirical processes, we can obtain
generalizations of~\Cref{ThmStandardIV,ThmLinFuncIV}.  For example,
Adamczak~\cite{Ada08} provides bounds on the suprema of empirical
processes under Orlicz norm conditions.  One can also make use of
truncation arguments to handle this more general setting.


\section{Proof for confidence interval guarantees}

In this section, we provide the proofs of our two main results on
confidence intervals---namely, ~\Cref{ThmConfInt}
and~\Cref{CorScalarConfInt}.


\subsection{Proof of~\Cref{ThmConfInt}}
\label{AppProofThmConfInt}

Let $V \sim \mathcal{N}(0, 1)$ be standard normal.  By the
Berry--Esseen theorem (c.f.~\Cref{thm:berry}), for any $r > 0$, we
have
\begin{align}
\label{EqnGeneralBerry}  
\left| \PP \left[ \sqrt{\numobs} \big|u^T \GamHat \DeltaHat \big| \geq
  r \sqrt{u^T \SigMat u} \right] - \PP \big[|V | \geq r \big] \right|
\leq \frac{c m_3}{\sqrt{\numobs}}.
\end{align}
The choice $r = \quantile{\delta}$ ensures that $\PP [|V| \geq
  \quantile{\delta}] = \delta$, whence
\begin{subequations}
\begin{align}
\label{EqnInterOne}
\sqrt{\numobs} \big| u^T \GamHat \DeltaHat \big| \leq
\quantile{\delta} \sqrt{u^T \SigMat u}
\end{align}
with probability at least $1 - \delta - \frac{c m_3}{\sqrt{\numobs}}$.

Introduce the shorthand $\shortw_i \defn
\inprod{\uvec}{\instrument_i}$, and observe that $\sqrt{u^T \SigMat u}
= \sqrt{\frac{1}{\numobs} \sum_{i=1}^\numobs \EE[\noise_i^2
    \shortw_i^2]}$.  Using the $\bou$-boundedness and independence
condition, we can apply Theorem 10 in the
paper~\cite{maurerpontil2009} so as to guarantee that
\begin{align}
\label{EqnInterTwo}  
\sqrt{u^T \SigMat u} & \leq \sqrt{\frac{1}{\numobs(\numobs-1)} \sum_{i
    < j} (\noise_i \shortw_i - \noise_j \shortw_j)^2} + \bound
\sqrt{\frac{8\log(1/\pardelta')}{\numobs - 1}}.
\end{align}
with probability at least $1 - \pardelta'$.

Recall the definition~\eqref{EqnDefnKapErr} of
$\errone_\numobs(\DelHat)$, as well as the matrix $\SigEst$ from
equation~\eqref{EqnDefnSigEst}.  We claim that the proof will be
complete if we can show that
\begin{align}
\label{EqnIntermediate}  
\sqrt{\frac{1}{\numobs(\numobs-1)} \sum_{i < j} (\noise_i \shortw_i -
  \noise_j \shortw_j)^2} & \leq \sqrt{u^T \SigEst u} +
\errone_\numobs(\DelHat)
\end{align}
\end{subequations}
Indeed, if this bound holds, then combining it with
inequalities~\eqref{EqnInterOne} and~\eqref{EqnInterTwo} yields
\begin{align}
\label{EqnAppendixCI}  
\sqrt{\numobs} \big| u^T \GamHat \DeltaHat \big| \leq
\quantile{\delta} \Big \{ \sqrt{u^T \SigEst u} +  \bound
\sqrt{\frac{8\log(1/\pardelta')}{\numobs - 1}} +
\errone_\numobs(\DelHat) \Big \}
\end{align}
with probability at least $1 - \pardelta - \pardelta' - \frac{c
  m_3}{\sqrt{\numobs}}$, which establishes the
bound~\eqref{EqnConfInt} from~\Cref{ThmConfInt}.

\paragraph{Proof of the bound~\eqref{EqnIntermediate}:}
We begin by observing the decomposition
\begin{align}
\label{EqnAdecomp}  
  \underbrace{\noise_i \shortw_i - \noise_j \shortw_j}_{a_{ij}} &
  \quad = \quad \underbrace{\noiseest_i \shortw_i - \noiseest_j
    \shortw_j}_{b_{ij}} \quad + \quad \underbrace{
    \inprod{\covariate_i}{\DelHat} \shortw_i -
    \inprod{\covariate_j}{\DelHat} \shortw_j}_{c_{ij}}.
\end{align}
Since $\|a\|_2 \leq \|b\|_2 + \|c\|_2$ by the triangle inequality, we
have
\begin{align*}
\underbrace{\sqrt{\sum_{i < j} (\noise_i \shortw_i - \noise_j
    \shortw_j)^2}}_{\|a\|_2} & \leq \underbrace{
  \sqrt{\sum_{i < j}
    \left(\noiseest_i \shortw_i - \noiseest_j \shortw_j
    \right)^2}}_{\|b\|_2} + \underbrace{ \sqrt{ \sum_{i < j}
    \left(\inprod{\covariate_i}{\DelHat} \shortw_i -
    \inprod{\covariate_j}{\DelHat} \shortw_j \right)^2}}_{\|c\|_2}.
\end{align*}
Dividing both sides by $\sqrt{\numobs (\numobs-1)}$ and recalling the
definition~\eqref{EqnDefnKapErr} of $\errone_\numobs(\DelHat)$, we
have shown that
\begin{align*}
\sqrt{\frac{1}{\numobs(\numobs-1)} \sum_{i < j} (\noise_i \shortw_i -
  \noise_j \shortw_j)^2} & \leq \sqrt{\frac{1}{\numobs (\numobs-1)}
  \sum_{i < j} \left(\noiseest_i \shortw_i - \noiseest_j \shortw_j
  \right)^2} + \errone_\numobs(\DelHat)
\end{align*}

By~\Cref{lems:sample-var}(a), we have the equivalence
\begin{align*}
 \frac{1}{\numobs (\numobs-1)} \sum_{i < j} \left(\noiseest_i
 \shortw_i - \noiseest_j \shortw_j \right)^2 & = \frac{1}{\numobs-1}
 \sum_{i=1}^\numobs \big( \noisehat_i \shortw_i - \frac{1}{\numobs}
 \sum_{j=1}^\numobs \noisehat_j \shortw_j \big)^2
\end{align*}
But recalling that $\shortw_i = \inprod{u}{\instrument_i}$, we have
\begin{align*}
  \sum_{j=1}^\numobs \noisehat_j \shortw_j \; = \;
  \inprod{u}{\sum_{j=1}^\numobs \big(y_i -
    \inprod{\covariate_i}{\betahat} \big) \instrument_j} &
  \stackrel{(\star)}{=} 0,
\end{align*}
where equality $(\star)$ follows from the optimality conditions that
define the IV estimate.  Thus, we have shown that
\begin{align*}
 \frac{1}{\numobs (\numobs-1)} \sum_{i < j} \left(\noiseest_i
 \shortw_i - \noiseest_j \shortw_j \right)^2 & = \frac{1}{\numobs-1}
 \sum_{i=1}^\numobs \noisehat^2_i \shortw_i^2 \; = \; u^T \SigEst u,
\end{align*}
thereby proving the claim~\eqref{EqnIntermediate}.

\subsection{Proof of~\Cref{CorRefineConfInt}}
\label{AppProofCorRefineConfInt}

We begin by applying~\Cref{ThmConfInt} with the vector $\vvec =
\GamMat^{-1} \uvec$, from which we are guaranteed to have
\begin{subequations}
  \begin{align}
    \label{EqnRefineOne}
\sqrt{\numobs} |\inprod{\uvec}{\GamMat^{-1} \GamHat \DelHat}| & \leq
\quantile{\delta} \left \{ \sqrt{\uvec^T \GamMat^{-1} \SigEst
  \GamMat^{-T} \uvec} + \bound \|\GamMat^{-T} \uvec\|_2 \sqrt{\frac{8
    \log(1/\pardelta')}{\numobs - 1}} + \errone_\numobs(\DelHat)
\right \}.
\end{align}
with the claimed probability.  An application of the triangle
inequality ensures that
\begin{align}
  \sqrt{\numobs} |\inprod{\uvec}{\DelHat}| & \leq \sqrt{\numobs}
  |\inprod{\uvec}{\GamMat^{-1} \GamHat \DelHat}| + \sqrt{\numobs}
  |\inprod{\uvec}{(\GamMat^{-1} \GamHat - \IdMat_\usedim)\DelHat}|
  \nonumber \\
\label{EqnRefineTwo}
  & = \sqrt{\numobs} |\inprod{\uvec}{\GamMat^{-1} \GamHat \DelHat}|
+ \sqrt{\numobs} \big|u^T \Dmat \DelHat \big|.
\end{align}
Similarly, we have
\begin{align}
\sqrt{\uvec^T \GamMat^{-1} \SigEst \GamMat^{-T} \uvec} \: = \: \|
\SigEst^{1/2} \GamMat^{-T} \uvec\|_2 & \leq \| \SigEst^{1/2}
\GamHat^{-T} \uvec\|_2 + \| \SigEst^{1/2} \big(\GamMat^{-T} -
\GamHat^{-T} \big) \uvec \|_2 \nonumber \\
\label{EqnRefineThree}
& = \sqrt{\uvec^T \GamHat^{-1} \SigEst \GamHat^{-T} \uvec} + \|
\SigEst^{1/2} \GamHat^{-T} \Dmat^T \uvec \|_2.
\end{align}
\end{subequations}
Combining the bounds~\eqref{EqnRefineTwo} and~\eqref{EqnRefineThree}
with our original inequality~\eqref{EqnRefineOne} yields 
  \begin{align*}
    \sqrt{\numobs} |\inprod{\uvec}{\GamMat^{-1} \GamHat \DelHat}| &
    \leq \quantile{\delta} \left \{ \sqrt{\uvec^T \GamHat^{-1} \SigEst
      \GamHat^{-T} \uvec} + \bound \|\GamMat^{-T} \uvec\|_2
    \sqrt{\frac{8 \log(1/\pardelta')}{\numobs - 1}} \right \} +
    \quantile{\delta} \errone_\numobs(\DelHat) +
    \errtwo_\numobs(\DelHat, \Dmat),
  \end{align*}
where
\begin{align*}
  \errtwo_\numobs(\DelHat, \Dmat) & \defn
  \underbrace{\quantile{\delta} \| \SigEst^{1/2} \GamHat^{-T} \Dmat^T
    \uvec \|_2 + \bound \; \|\GamHat^T \Dmat^T \uvec \|_2 \;
    \sqrt{\tfrac{8 \log(1/\pardelta')}{\numobs - 1}}}_{\Term_4} +
  \underbrace{\sqrt{\numobs} |\uvec^T\Dmat \DelHat|}_{\Term_5}.
\end{align*}
As noted previously, we have $\errone_\numobs(\DelHat) \leq
\specnorm{\Qmat(\vvec)} \|\DelHat\|_2$, and moreover, we see that
\begin{align*}
\Term_4 & \leq \specnorm{\Dmat} \left \{ \quantile{\delta}
\specnorm{\GamHat^{-1} \SigHat^{1/2}} + \Bou \specnorm{\GamHat}
\sqrt{\tfrac{8 \log(1/\pardelta')}{\numobs - 1}} \right \} \quad
\mbox{and} \quad \Term_5 \leq \sqrt{\numobs} \specnorm{\Dmat}
\|\DelHat\|_2.
\end{align*}
Putting together the pieces completes the proof.      


\subsection{Proof of~\Cref{cor:uniform-CI}}
\label{AppProofCorUnifInt}

The main idea is to control the error over the $\ell_1$-ball centered at $\Gam^{-1} \uvec$, and instead of uniformly controlling over some $\epsilon$-net, we can uniformly control over the vertices of the $\ell_1$-ball due to the fact that the optimal solution of a linear program is at the extremal points. 

For convenience, we use $\BB_1(\vvec; \lambda)$ to denote the $\ell_1$ ball around $\vvec$ with radius $\lambda$, i.e.
\begin{align*}
\BB_1(v; \lambda) = \{ w \in \RR^\usedim \, : \, \| w- v \|_1 \leq \lambda \} =  \text{conv} \left\{ w \pm \lambda e_i \, : \, i \in [\usedim] \right\}.
\end{align*}
We have that $\GamHat^{-1}\uvec \in \BB_1(\vvec;\lambda)$ and thus
\begin{align*}
\sqrt{\numobs} \, | \langle \uvec, \DelHat \rangle | &= \sqrt{\numobs} \, | \langle \GamHat^{-1} u, \GamHat \DelHat \rangle | \\
&\stackrel{(a)}{\leq} \sqrt{\numobs} \, \sup_{w \in \BB_1(\Gam^{-1}\uvec; \lambda)} | \langle w, \GamHat \DelHat \rangle | \\
&\stackrel{(b)}{=} \sqrt{\numobs} \cdot \max_{\substack{v \in \{+\Gam^{-1}\uvec, -\Gam^{-1}\uvec\} \\ w \in \CanBasis}} \, \langle \uvec + \lambda w, \GamHat \DelHat \rangle \\
&= \sqrt{\numobs} \cdot \max_{w \in \CanBasis} |\langle \Gam^{-1} \uvec + \lambda w, \GamHat \DelHat \rangle |
\end{align*}
Step (a) follows from above, and step (b) follows from the fact that the optimal solution for any linear program exists at one of the vertices.

By Berry-Esseen we have
\begin{align*}
&\PP\left( \sqrt{\numobs} \cdot \max_{w \in \CanBasis} |\langle \Gam^{-1} \uvec + \lambda w, \GamHat \DelHat \rangle | \geq \epsilon \right) \\
& \qquad \qquad \leq \PP \left(\max_{w \in \CanBasis} |\langle \Gam^{-1} \uvec + \lambda w, \SigMat^{1/2} V \rangle | \geq \epsilon \right) + \frac{cm_3}{\sqrt{\numobs}}
\end{align*}
where $V \sim \mathcal{N}(0, \mathbf{I}_d)$. By a union bound we have
\begin{align*}
\PP \left(\max_{w \in \CanBasis} |\langle \Gam^{-1} \uvec + \lambda w, \SigMat^{1/2} V \rangle | \geq \epsilon \right) &\leq  \sum_{i=1}^d \PP\left( | \langle \Gam^{-1} \uvec + \lambda e_i, \SigMat^{1/2} V \rangle | \geq \epsilon \right) 
\\& \qquad \qquad + \sum_{i=1}^d \PP\left( | \langle \Gam^{-1} \uvec - \lambda e_i, \SigMat^{1/2} V \rangle | \geq \epsilon \right).
\end{align*}
Using the shorthand 
\begin{align*}
\sigma^2(e_i) \defn (\Gam^{-1}\uvec + \lambda e_i)^T \SigMat (\Gam^{-1}\uvec + \lambda e_i)
\end{align*}
we have $ \langle \Gam^{-1} \uvec - \lambda e_i, \SigMat^{1/2} V \rangle \sim \mathcal{N}(0, \sigma^2(e_i))$, and that
\begin{align*}
\PP\left( | \langle \Gam^{-1} \uvec + \lambda e_i, \SigMat^{1/2} V \rangle | \geq \epsilon \right) = 2 - 2 \Phi\left( \frac{\epsilon}{\sigma(e_i)} \right). 
\end{align*}
If we use $\epsilon = r_{\pardelta/2d} \sup_{w \in \CanBasis} \sigma(w)$, we obtain
\begin{align*}
\max_{w \in \CanBasis} |\langle \Gam^{-1} \uvec + \lambda w, \SigMat^{1/2} V \rangle | \leq r_{\delta/2d} \sup_{w \in \CanBasis} \sigma(w)
\end{align*}
 with probability exceeding $1 - \pardelta$.

 We now focus on controlling the final term. We have that by applying Maurer and Pontil uniformly,
\begin{align*}
\sup_{w \in \CanBasis} \sigma(w) &\leq \sup_{w \in \CanBasis} \underbrace{\sqrt{(\Gam^{-1}u + \lambda w)^T \SigEst (\Gam^{-1}\uvec + \lambda w)}}_{ \STerm_1} \\
&\qquad \qquad + b \sqrt{\frac{8\log(2d/\pardelta')}{\numobs-1}} \cdot \sup_{w \in \CanBasis} \|\Gam^{-1}\uvec + \lambda w \|_2 \\
&\qquad \qquad + \sup_{w \in  \CanBasis} \underbrace{e_n(\DelHat; \Gam^{-1}\uvec + \lambda w)}_{\STerm_2}.
\end{align*}
We analyze the terms individually; for $\STerm_1$ we have
\begin{align*}
\STerm_1 = \| \Gam^{-1}\uvec + \lambda w \|_{\SigEst} &\leq \| \GamHat^{-1}\uvec \|_{\SigEst} + \| \Gam^{-1}u - \GamHat^{-1}\uvec \|_{\SigEst} + \lambda \| w \|_{\SigEst} \\
&\leq \sqrt{u^T \GamHat^{-T} \SigEst \GamHat^{-1} \uvec} + 2 \lambda \cdot  ||| \SigEst |||_2.
\end{align*}
For $\STerm_2$ we have
\begin{align*}
\STerm_2 \leq \| \DelHat\|_2 \cdot \sup_{w \in \CanBasis} \sqrt{|||\mat{Q}_n(\Gam^{-1}u + \lambda w) |||_2} 
\end{align*}

Putting together the pieces, we conclude
\begin{align*}
\sqrt{\numobs} |\langle u, \DelHat \rangle | &\leq r_{\pardelta/2d} \left\{ \sqrt{u^T \GamHat^{-T}  \SigEst \GamHat^{-1} u} + b \cdot \sqrt{\frac{8\log(2d/\pardelta')}{\numobs-1}} \cdot \sup_{w \in \CanBasis} \| \Gam^{-1} u + \lambda w \|_2  \right. \\ 
& \qquad \qquad \qquad \qquad  \left. + 2\lambda  \cdot \specnorm{\SigEst} + \| \DelHat \|_2 \cdot \sup_{w \in \CanBasis} \sqrt{\specnorm{\mat{Q}_n(\Gam^{-1}u + \lambda w)}}  \right\},
\end{align*}
as desired.


\subsection{Proof of~\Cref{CorScalarConfInt}}
\label{SecProofCorScalarConfInt} 

We now prove the corollary applicable to the univariate case ($\usedim
= 1$), so that both $\GamHat$ and $\SigEst$ are scalar quantities, and
$\uvec = 1$.

\paragraph{Proof of part (a):}

Beginning with the definition~\eqref{EqnDefnKapErr} of
$\errone_\numobs(\DelHat)$, in the scalar case, it simplifies to
\begin{align*}
  \errone_\numobs(\DelHat) & = \frac{1}{\sqrt{\numobs}}
  \frac{\sqrt{\frac{1}{\numobs (\numobs-1)} \sum_{i < j}
      \big(\instrument_i \covariate_i - \instrument_j
      \covariate_j)^2}}{|\frac{1}{\numobs} \sum_{i=1}^\numobs
    \instrument_i \covariate_i|} \; |\DelHat| \; = \; \sqrt{\numobs}
  \; \mkap |\GamHat| \; |\DelHat|,
\end{align*}
where we recall the definition~\eqref{EqnDefnScalarKap} of the scalar
$\mkap$.

Substituting this relation into the
bound~\eqref{EqnAppendixCI} and re-arranging yields
\begin{align*}
\sqrt{\numobs} |\GamHat| |\DeltaHat| \; \big \{ 1 - \quantile{\delta}
\mkap \big \} & \leq \quantile{\delta} \Big \{ \sqrt{\SigEst} + \bound
\sqrt{\frac{8\log(1/\pardelta')}{\numobs - 1}}  \Big \}.
\end{align*}
As long as $1 - \quantile{\delta} \mkap > 0$, we can rescale both
sides by $(1 - \quantile{\delta} \mkap) \; |\GamHat|$, which yields
the claimed bound.

\paragraph{Proof of part (b):}

Returning to our Berry--Esseen bound~\eqref{EqnGeneralBerry}, we set
$r = \quantile{1-\delta}$ to conclude that
\begin{subequations}
\begin{align}
\label{EqnInterOneB}
\sqrt{\numobs} |\GamHat| \; |\DeltaHat| & \geq \quantile{1 - \delta}
\sqrt{\SigMat}
\end{align}
with probability at least $1 - \delta - \frac{c m_3}{\sqrt{\numobs}}$.
(Here we recall that $u = 1$ in the univariate case, and that both
$\GamHat$ and $\SigMat$ are scalars.)

From Theorem 10 in the paper~\cite{maurerpontil2009}, we are
guaranteed to have
\begin{align}
\label{EqnInterTwoB}  
\sqrt{\SigMat} & \geq \sqrt{\frac{1}{\numobs(\numobs-1)} \sum_{i < j}
  (\noise_i \shortw_i - \noise_j \shortw_j)^2}
- \bound
\sqrt{\frac{8\log(1/\pardelta')}{\numobs - 1}}
\end{align}
with probability at least $1 - \pardelta'$, where in this scalar case
$\shortw_i = \inprod{u}{\instrument_i} = \instrument_i$.

Again using the decomposition~\eqref{EqnAdecomp}, the triangle
inequality implies that
\begin{align}
  \label{EqnInterThreeB}
\underbrace{\sqrt{\sum_{i < j} (\noise_i \shortw_i - \noise_j
    \shortw_j)^2}}_{\|a\|_2} & \geq \underbrace{ \sqrt{ \sum_{i < j}
    \left(\inprod{\covariate_i}{\DelHat} \shortw_i -
    \inprod{\covariate_j}{\DelHat} \shortw_j \right)^2}}_{\|c\|_2} -
\underbrace{ \sqrt{\sum_{i < j} \left(\noiseest_i \shortw_i -
    \noiseest_j \shortw_j \right)^2}}_{\|b\|_2}.
\end{align}
\end{subequations}
Combining equations~\eqref{EqnInterOneB}, ~\eqref{EqnInterTwoB}
and~\eqref{EqnInterThreeB}, we have shown that
\begin{align*}
  \sqrt{\numobs} \: |\GamHat| \: |\DelHat| & \geq \quantile{1 - \delta}
  \Big \{ \sqrt{\numobs} \mkap |\GamHat| |\DelHat|
  - \sqrt{\SigEst} -
  \bound \sqrt{\frac{8\log(1/\pardelta')}{\numobs - 1}} \Big \}.
\end{align*}
Re-arranging terms yields
\begin{align*}
  \sqrt{\numobs} \: |\GamHat| \: |\DelHat| \big \{ \mkap
  \quantile{1-\delta} - 1 \big \} & \leq \quantile{1 - \delta} \Big \{
  \sqrt{\SigEst} + \bound \sqrt{\frac{8\log(1/\pardelta')}{\numobs -
      1}} \Big \}.
\end{align*}
As long as $\mkap \quantile{1-\delta} > 1$, we can divide both sides
by $|\GamHat| \big( \mkap \quantile{1-\delta} - 1 \big)$ to conclude
the proof.


\section{Some auxiliary results}

In this appendix, we collect the statements of various tail bounds,
Berry--Esseen bounds, and sample variance bounds used in our analysis.

\subsection{Tail and concentration inequalities}
\label{AppEmpProcess}

Portions of our analysis make use of Bernstein's inequality:
\begin{theorem}
\label{thm:bernstein}
Let $\{X_i\}_{i=1}^\numobs$ be independent zero-mean random variables
such that $|X_i| \leq b$ almost surely for each $i$.  Then
\begin{align}
\PP\left(\sum_{i=1}^\numobs X_i \geq t \right) \leq \exp\left(-
\frac{\tfrac{1}{2} t^2}{\sum_{i=1}^\numobs \EE[X_i^2] + \tfrac{1}{3}
  bt }\right) \qquad \mbox{for each $t > 0$.}
\end{align}
\end{theorem}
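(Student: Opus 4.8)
The plan is to use the classical Chernoff (exponential moment) method. First I would fix a parameter $\lambda > 0$ and apply Markov's inequality to the exponentiated sum, using independence to factor the moment generating function:
\[
\PP\Big(\sum_{i=1}^\numobs X_i \geq t\Big) \;\leq\; e^{-\lambda t}\, \EE\Big[\exp\Big(\lambda \sum_{i=1}^\numobs X_i\Big)\Big] \;=\; e^{-\lambda t}\prod_{i=1}^\numobs \EE\big[e^{\lambda X_i}\big].
\]
The case $\sum_i \EE[X_i^2] = 0$ is trivial (all $X_i$ vanish almost surely), so I would dispense with it at the outset and assume $v \defn \sum_{i=1}^\numobs \EE[X_i^2] > 0$.

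The substantive step is bounding each $\EE[e^{\lambda X_i}]$. For a single zero-mean variable $X$ with $|X| \leq b$ almost surely and $\sigma^2 \defn \EE[X^2]$, I would expand $e^{\lambda X} = 1 + \lambda X + \sum_{k\geq 2}\lambda^k X^k/k!$, take expectations so the linear term drops, and use the pointwise bound $|X|^k \leq b^{k-2}X^2$ for $k\geq 2$ to obtain $\EE[e^{\lambda X}] \leq 1 + \sigma^2\sum_{k\geq 2}\lambda^k b^{k-2}/k!$. Invoking the elementary inequality $k! \geq 2\cdot 3^{k-2}$ bounds the tail series by a geometric series, giving $\EE[e^{\lambda X}] \leq 1 + \tfrac{\lambda^2\sigma^2/2}{1-\lambda b/3} \leq \exp\big(\tfrac{\lambda^2\sigma^2/2}{1-\lambda b/3}\big)$ for every $0 < \lambda < 3/b$. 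Multiplying these bounds over $i$ yields $\PP(\sum_i X_i \geq t) \leq \exp\big(-\lambda t + \tfrac{\lambda^2 v/2}{1-\lambda b/3}\big)$.

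Finally I would optimize the exponent over $\lambda \in (0,3/b)$ via the explicit choice $\lambda = t/(v + bt/3)$, which lies in the admissible range since $v > 0$, and which makes $1 - \lambda b/3 = v/(v+bt/3)$, so that $\tfrac{\lambda^2 v/2}{1-\lambda b/3} = \lambda t/2$ and the whole exponent collapses to $-\lambda t/2 = -\tfrac{1}{2}t^2/(v + bt/3)$. This is precisely the claimed inequality. The only delicate point—hardly an obstacle—is pinning down the constant $\tfrac13$ in the single-variable bound, which rests entirely on the inequality $k!\geq 2\cdot 3^{k-2}$; the rest is routine bookkeeping.
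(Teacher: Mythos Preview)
Your proof is correct and is the standard Cram\'er--Chernoff argument for Bernstein's inequality: the moment-generating-function bound via $k!\geq 2\cdot 3^{k-2}$ and the explicit choice $\lambda = t/(v + bt/3)$ both go through exactly as you describe. The paper itself does not supply a proof of this statement; it is quoted in the appendix as a classical auxiliary result, so there is no in-paper argument to compare against.
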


There are various extensions of Bernstein's inequality to suprema of
empirical processes.  In particular, let $\fclass$ be a countable
class of functions $f:\mathcal{X} \to \RR$, and let $\{ X_i
\}_{i=1}^\numobs$ be a sequence of independent random variables in
$\mathcal{X}$ such that $\Exs[f(X_i)] = 0$ for each $f \in \fclass$.
It is frequently of interest to control the tails of the random
variable \mbox{$Z_\numobs \defn \sup_{f \in \fclass}
  \sum_{i=1}^\numobs f(X_i)$.}  In order to do so, we make use of
following result due to Klein and Rio~\cite{klein2005empproc}:
\begin{theorem}
\label{thm:klein-rio}
Suppose that $\sup_{f \in \fclass, i \in [\numobs]} |f(X_i)| \leq
\bound$ almost surely.  Then for all $t > 0$, we have
\begin{align}
\label{EqnKleinRio}  
\PP[Z_\numobs \geq \EE[Z_\numobs] + t] \leq \exp \left( - \frac{t^2}{2
  \nu^2 + 3 \bound t} \right)
\end{align}
where $\nu^2 \defn \EE[\sup \limits_{f \in \fclass} \sum_{i=1}^\numobs
  f^2(X_i)]$.
\end{theorem}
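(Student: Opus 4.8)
The statement is the upper-tail half of the concentration inequality of Klein and Rio~\cite{klein2005empproc} for suprema of empirical processes, and a self-contained proof would follow the entropy (Herbst) method in the style of Ledoux, Massart, Bousquet, and Klein--Rio. The plan is as follows. First I would reduce to a finite class: since $\fclass$ is countable, write it as an increasing union $\fclass = \bigcup_m \fclass_m$ of finite subclasses, so that $Z_\numobs^{(m)} \defn \sup_{f \in \fclass_m}\sum_{i=1}^\numobs f(X_i)$ increases almost surely to $Z_\numobs$; establishing the bound for each $\fclass_m$ with constants independent of $m$ and letting $m \to \infty$ (monotone convergence for the tail events, and $\EE[Z_\numobs^{(m)}] \uparrow \EE[Z_\numobs]$) gives the general statement while sidestepping measurability issues.

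The heart of the argument, for finite $\fclass$, is a Bernstein-type bound on the log-moment generating function $L(\lambda) \defn \log\EE[e^{\lambda Z_\numobs}]$ for $\lambda \in (0, 1/\bound)$, obtained via the tensorization of entropy $\operatorname{Ent}(e^{\lambda Z_\numobs}) \le \sum_{i=1}^\numobs \EE\big[\operatorname{Ent}^{(i)}(e^{\lambda Z_\numobs})\big]$, where $\operatorname{Ent}^{(i)}$ is the entropy in $X_i$ conditional on the remaining coordinates. Fixing $i$, let $\widehat f$ attain the supremum defining $Z_\numobs$ and set $Z_\numobs^{(i)} \defn \sup_f \sum_{j \neq i} f(X_j)$, which does not depend on $X_i$; a leave-one-out comparison gives $Z_\numobs - Z_\numobs^{(i)} \le \widehat f(X_i) \le \bound$ and $\sum_{i=1}^\numobs\big(Z_\numobs - Z_\numobs^{(i)}\big)_+^2 \le \sup_{f \in \fclass}\sum_{i=1}^\numobs f^2(X_i)$, the random quantity whose expectation is $\nu^2$. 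Combining the pointwise bound $\operatorname{Ent}^{(i)}(e^{\lambda Z_\numobs}) \le \EE^{(i)}\big[\phi\big(-\lambda(Z_\numobs - Z_\numobs^{(i)})\big)e^{\lambda Z_\numobs}\big]$ (with $\phi(u) = e^u - u - 1$), the estimate $\phi(-u) \le u^2/2$ for $u \ge 0$, and the uniform bound on the increments, and then summing over $i$, one arrives at a first-order differential inequality for $L$ whose right-hand side features $\EE\big[e^{\lambda Z_\numobs}\sup_f \sum_i f^2(X_i)\big]$; bounding that expectation by $\nu^2\,\EE[e^{\lambda Z_\numobs}]$ up to a $(1-\bound\lambda)^{-1}$-type correction and integrating yields an MGF bound of the shape $L(\lambda) - \lambda\EE[Z_\numobs] \le \tfrac{\lambda^2 \nu^2}{2(1-\bound\lambda)}$.

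Finally I would invoke the Chernoff bound $\PP[Z_\numobs \ge \EE[Z_\numobs] + t] \le \exp\big(-\lambda t + L(\lambda) - \lambda\EE[Z_\numobs]\big)$, substitute the MGF estimate, and optimize over $\lambda \in (0,1/\bound)$; this produces an exponent of Bernstein form $-t^2/(2\nu^2 + 3\bound t)$, which is exactly~\eqref{EqnKleinRio}.

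The main obstacle is the second step --- extracting the sharp variance proxy $\nu^2 = \EE[\sup_f \sum_i f^2(X_i)]$ rather than a cruder surrogate (such as $\sup_f \sum_i \EE[f^2(X_i)]$ inflated by a multiple of $\bound\,\EE[Z_\numobs]$). The subtlety is that the maximizer $\widehat f$ is data-dependent, so the term $\sup_f\sum_i f^2(X_i)$ cannot be separated from the conditional entropy without care; controlling this dependence, and tracking constants precisely enough to land the clean factor $3$ in $2\nu^2 + 3\bound t$, is the delicate part of Klein and Rio's analysis, which is why we cite~\cite{klein2005empproc} rather than reproduce it.
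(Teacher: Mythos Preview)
The paper does not prove this theorem at all: it is stated in the appendix as an auxiliary result and attributed directly to Klein and Rio~\cite{klein2005empproc}, with no proof or sketch given. So there is no ``paper's own proof'' to compare your proposal against.

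Your outline is a faithful description of the entropy-method route that Klein and Rio themselves follow (reduction to finite $\fclass$, tensorization of entropy, leave-one-out increments, integration of the resulting differential inequality for the log-MGF, Chernoff). You correctly flag the delicate point: obtaining the sharp variance proxy $\nu^2 = \EE\big[\sup_f \sum_i f^2(X_i)\big]$ rather than the weak-variance surrogate, and landing the constant $3$. Since the paper's stance is precisely ``cite rather than reproduce,'' your closing remark is in fact the same conclusion the authors reach.
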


\subsection{Berry--Esseen theorem}

We also make use of some quantitative versions of the Berry--Esseen
theorem.
\begin{theorem}[\cite{raic2019berry}]
\label{thm:berry}
Let $\{X_i\}_{i=1}^\numobs$ be a sequence of zero-mean independent
random vectors in $\RR^d$ such that $\sum_{i=1}^\numobs \EE[X_i X_i^T]
= \mathbf{I}_d$, and define $W = \sum_{i=1}^\numobs X_i$.  Letting $V
\sim \mathcal{N}(0, \mathbf{I}_d)$ be standard normal, we have
\begin{align*}
\left| \PP(W \in A) - \PP(V \in A) \right| \leq \big(42d^{1/4} + 16
\big) \sum_{i=1}^\numobs \EE\|X_i\|_2^3.
\end{align*}
valid for any measurable convex set $A \subset \RR^d$.
\end{theorem}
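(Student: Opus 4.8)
The statement is the multivariate Berry--Esseen bound of Rai\v{c}~\cite{raic2019berry}, so the plan is to outline the Stein's-method argument behind it, paying attention to where the factor $d^{1/4}$ and the explicit constants enter; for our purposes it then suffices to invoke the theorem as stated. Throughout let $Z \sim \mathcal{N}(0, \mathbf{I}_d)$ and fix a convex set $A \subset \RR^d$. The starting point is the Gaussian Stein equation $\Delta f(x) - \langle x, \nabla f(x) \rangle = h(x) - \E[h(Z)]$, whose bounded solution has the Ornstein--Uhlenbeck semigroup representation $f_h(x) = -\int_0^\infty \big( \E[h(e^{-t} x + \sqrt{1 - e^{-2t}}\, Z)] - \E[h(Z)] \big)\, dt$. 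For smooth $h$ one would directly write $\E[h(W)] - \E[h(Z)] = \E[\Delta f_h(W) - \langle W, \nabla f_h(W)\rangle]$ and estimate the right-hand side; the difficulty is that the test function we need is $h = \mathbf{1}_A$, which is discontinuous, so that $f_h$ has derivatives blowing up near $\partial A$.

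First I would replace $\mathbf{1}_A$ by a mollification $h_\varepsilon$ at scale $\varepsilon > 0$, e.g.\ the Gaussian convolution $h_\varepsilon(x) \defn \E[\mathbf{1}_A(x + \varepsilon Z')]$ with $Z'$ an independent standard normal --- note that $h_\varepsilon$ is a mixture of indicators of translates of $A$, hence of convex sets, a fact that matters below. Differentiating the semigroup representation gives quantitative bounds on $\nabla^2 f_{h_\varepsilon}$ and $\nabla^3 f_{h_\varepsilon}$ in terms of negative powers of $\varepsilon$. The error from replacing $\mathbf{1}_A$ by $h_\varepsilon$, both for $W$ and for $Z$, is controlled by the Gaussian mass of an $\varepsilon$-neighbourhood of $\partial A$. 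Here enters the one genuinely geometric ingredient: the classical bound on the Gaussian surface area of convex sets, $\E\big[\mathbf{1}\{\mathrm{dist}(Z, \partial A) \le \varepsilon\}\big] \le C\, d^{1/4}\, \varepsilon$ uniformly over convex $A$. This $d^{1/4}$ is exactly the dimensional factor that survives into the constant $42 d^{1/4} + 16$, and convexity of $A$ is used only at this step.

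Next I would run the Lindeberg swapping step. With $f \defn f_{h_\varepsilon}$, write $W = \sum_{i=1}^\numobs X_i$ and $W^{(i)} \defn W - X_i$; using independence of the summands, $\E[X_i] = 0$, and $\sum_{i=1}^\numobs \E[X_i X_i^T] = \mathbf{I}_d$, replace $W$ by $W^{(i)}$ inside the $i$-th contribution to $\E[\Delta f(W) - \langle W, \nabla f(W)\rangle]$ and Taylor-expand. The zeroth- and first-order terms vanish by centering and independence, the second-order terms cancel because of the covariance normalization, and the remainder is a sum of third-order terms, each bounded by a constant multiple of $\sup\|\nabla^3 f\| \cdot \E\|X_i\|_2^3$. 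Together with the mollification errors this gives an estimate of the schematic form $|\PP(W \in A) - \PP(Z \in A)| \le c_1\, d^{1/4}\, \varepsilon + c_2\, \varepsilon^{-1} \sum_{i=1}^\numobs \E\|X_i\|_2^3$.

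The step I expect to be the real obstacle is the final optimization over $\varepsilon$: balancing the two terms in the display above naively produces a bound of order $\sqrt{d^{1/4} \sum_i \E\|X_i\|_2^3}$, which in the regime of interest (right-hand side small) is weaker than the claimed bound that is \emph{linear} in $\sum_i \E\|X_i\|_2^3$. Recovering that linear dependence with a clean constant requires a recursive treatment of the smoothing: since $h_\varepsilon$ is itself a mixture of indicators of convex sets, the discrepancy for $h_\varepsilon$ feeds back into the convex-set discrepancy, and iterating this relation trades the scale $\varepsilon$ for improved constants without the square-root loss. This is the technical heart carried out in~\cite{raic2019berry}, and it is what yields the stated inequality with the explicit constant; I would not reproduce that optimization here.
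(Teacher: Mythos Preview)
The paper does not prove this theorem at all: it is stated in the appendix as an auxiliary result cited from~\cite{raic2019berry} and used as a black box (specifically to derive the consequence~\eqref{EqnBerryConsequence} and the bound~\eqref{EqnGeneralBerry} in the proof of~\Cref{ThmConfInt}). So there is no ``paper's own proof'' to compare against; the intended treatment is simply to invoke the reference.

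Your sketch of the Stein's-method argument is a reasonable outline of how Rai\v{c} obtains the bound, and you correctly identify the key ingredients (Gaussian Stein equation with the Ornstein--Uhlenbeck representation, smoothing of the indicator, the $d^{1/4}$ entering via the Gaussian surface area of convex sets, Lindeberg replacement with third-order Taylor remainder, and the recursive/iterative smoothing step to avoid the square-root loss). But for the purposes of this paper all of that is unnecessary: the statement is quoted, not proved, and a one-line citation suffices.
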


In our analysis, it is convenient to make use of the following
consequence of this theorem.  Let $\{X_i\}_{i=1}^\numobs$ be a
sequence of zero-mean independent random vectors, and define
$\Sigma_\numobs \defn \frac{1}{\numobs} \sum_{i=1}^\numobs \EE[X_i
  X_i^T]$.  Then there is a universal constant $c$ such that for any
measurable convex set $A \subset \RR^d$, we have
\begin{align}
\label{EqnBerryConsequence}  
\left| \PP(\sqrt{\numobs} \,\Sigma_{\numobs}^{-1/2} \widebar{X} \in A)
- \PP(V \in A) \right| \leq \frac{cd^{1/4}}{\sqrt{\numobs}} \cdot
\frac{1}{\numobs} \sum_{i=1}^\numobs \EE\| \Sigma_\numobs^{-1/2} X_i
\|_2^3.
\end{align}


\subsection{Sample variance lemma}

We state and prove a simple lemma involving variances and empirical
variances.

\begin{lemma}
\label{lems:sample-var}
Suppose we have independent, zero-mean, random variables $X_1, \ldots,
X_\numobs$ and let $\widebar{X} \defn \empavg X_i$.
\begin{enumerate}

\item[(a)] The following equality holds:
\begin{align*}
\frac{1}{\numobs-1} \sum_{i=1}^\numobs \left( X_i - \widebar{X}
\right)^2 = \frac{1}{\numobs-1} \sum_{i=1}^\numobs X_i^2 -
\frac{\numobs}{\numobs-1} (\widebar{X})^2 =
\frac{1}{\numobs(\numobs-1)} \sum_{i<j} (X_i - X_j)^2.
\end{align*}

\item[(b)] We have
\begin{align*}
\EE\left[\frac{1}{\numobs(\numobs-1)} \sum_{i<j} (X_i - X_j)^2\right]
= \frac{1}{\numobs} \sum_{i=1}^\numobs \EE[X_i^2].
\end{align*}

\end{enumerate}
\end{lemma}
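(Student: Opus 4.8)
The plan is to establish part~(a) as a purely deterministic algebraic identity among three quadratic forms in $(X_1, \ldots, X_\numobs)$, and then obtain part~(b) by taking expectations in one of these representations and invoking independence.

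For the first equality in part~(a), I would expand $(X_i - \widebar X)^2 = X_i^2 - 2 X_i \widebar X + \widebar X^2$ and sum over $i \in [\numobs]$. Since $\sum_{i=1}^\numobs X_i = \numobs \widebar X$, the middle term collapses to $-2\numobs \widebar X^2$ while the last contributes $\numobs \widebar X^2$, leaving $\sum_{i=1}^\numobs X_i^2 - \numobs \widebar X^2$; dividing by $\numobs - 1$ yields the middle expression.

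For the second equality I would expand $\sum_{i<j}(X_i - X_j)^2 = \sum_{i<j}(X_i^2 + X_j^2) - 2\sum_{i<j} X_i X_j$. The first sum equals $(\numobs-1)\sum_{i=1}^\numobs X_i^2$ because each index lies in exactly $\numobs - 1$ unordered pairs, and for the second I would use $2\sum_{i<j}X_i X_j = \big(\sum_{i=1}^\numobs X_i\big)^2 - \sum_{i=1}^\numobs X_i^2 = \numobs^2 \widebar X^2 - \sum_{i=1}^\numobs X_i^2$. Combining these gives $\sum_{i<j}(X_i - X_j)^2 = \numobs \sum_{i=1}^\numobs X_i^2 - \numobs^2 \widebar X^2$, and dividing by $\numobs(\numobs-1)$ recovers the middle expression, completing part~(a).

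Part~(b) then follows by taking expectations in the representation $\frac{1}{\numobs(\numobs-1)}\sum_{i<j}(X_i - X_j)^2$: for $i \neq j$, independence and the zero-mean hypothesis give $\EE[(X_i - X_j)^2] = \EE[X_i^2] + \EE[X_j^2]$, so the same pair-counting argument yields $\EE\big[\sum_{i<j}(X_i - X_j)^2\big] = (\numobs-1)\sum_{i=1}^\numobs \EE[X_i^2]$, and dividing by $\numobs(\numobs-1)$ gives the claim. There is no real obstacle in this argument; the only point demanding a moment's care is the combinatorial bookkeeping --- that each index occurs in exactly $\numobs - 1$ of the unordered pairs $\{i,j\}$ --- which is what converts the $\sum_{i<j}$ sums into $\sum_i$ sums with the correct constants.
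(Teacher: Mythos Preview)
Your proposal is correct and follows essentially the same approach as the paper: both expand the quadratic forms algebraically to show the identities in part~(a) are all equal to $\frac{1}{\numobs-1}\sum_i X_i^2 - \frac{\numobs}{\numobs-1}\widebar X^2$, and both derive part~(b) by noting $\EE[(X_i - X_j)^2] = \EE[X_i^2] + \EE[X_j^2]$ from independence and zero mean. The only cosmetic difference is that you phrase the pair-counting step more explicitly, whereas the paper absorbs it into a one-line rearrangement.
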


\begin{proof}
For part (a), we have
\begin{align*}
\frac{1}{\numobs-1} \sum_{i=1}^\numobs \left( X_i - \widebar{X}
\right)^2 &= \frac{1}{\numobs-1} \sum_{i=1}^\numobs \left( X_i^2 -
2X_i \widebar{X} + \widebar{X}^2\right) \\ &= \frac{1}{\numobs-1}
\sum_{i=1}^\numobs X_i^2 - 2\widebar{X} \sum_{i=1}^\numobs X_i +
\frac{\numobs}{\numobs-1} \widebar{X}^2 \\ &= \frac{1}{\numobs - 1}
\sum_{i=1}^\numobs - \frac{\numobs}{\numobs - 1} \widebar{X}^2,
\end{align*}
and
\begin{align*}
\frac{1}{\numobs(\numobs-1)} \sum_{i<j} (X_i - X_j)^2 &=
\frac{1}{\numobs(\numobs-1)} \sum_{i < j} \left( X_i^2 - 2X_iX_j +
X_j^2\right) \\
& = \empavg X_i^2 - \frac{2}{\numobs(\numobs-1)} \sum_{i < j} X_i X_j
\\
& = \frac{1}{\numobs - 1} \sum_{i=1}^\numobs X_i^2 -
\frac{1}{\numobs(\numobs-1)} \left(\sum_{i=1}^\numobs X_i^2 + 2\sum_{i
  < j} X_i X_j \right) \\
& = \frac{1}{\numobs-1}\sum_{i=1}^\numobs X_i^2 -
\frac{\numobs}{\numobs - 1} \widebar{X}^2.
\end{align*}

For part (b), some algebra yields
\begin{align*}
\EE[(X_i - X_j)^2] = \EE[X_i^2] + \EE[X_j^2],
\end{align*}
using the independence of $X_i, X_j$. The claim then follows from
straightforward algebra.
\end{proof}


\section{Construction of a hard ensemble}
\label{AppHardEnsemble}

In this appendix, we describe a construction of an ensemble with
$\GamMat = \IdMat_\usedim$ and $\SigMat = \diag(1, \omega, \ldots,
\omega)$, so that $\Lmat = \SigMat$, as discussed
in~\Cref{SecHardEnsemble}.  We begin by drawing $(\covariate,
\instrument)$ pairs from the ensemble~\eqref{EqnEndo} with $\strinst =
1$ and arbitrary choices of $(\strendo, \plainstd)$.  We construct the
instrument vector $\instrument \in \real^\usedim$ from a mixture
distribution as follows: letting $V \sim N(0, \IdMat_\usedim)$ and
$e_1 \in \real^\usedim$ be the standard basis vector with one in entry
$1$, we set
\begin{align*}
\instrument & = \begin{cases} \sqrt{2} \: \inprod{e_1}{V} \: e_1 &
  \mbox{with probability $1/2$, and} \\
  \sqrt{2} \: \big \{ V - \inprod{e_1}{V} e_1 \big \} & \mbox{with
    probability $1/2$.}
\end{cases}
\end{align*}
This construction ensures that $\instrument$ is zero-mean, with
$\Exs[\instrument \instrument^T] = \IdMat_\usedim$, and moreover that
$\Exs[\instrument \covariate^T] = \IdMat_\usedim$.

Letting $\Vtil_i \sim N(0,1)$ be standard Gaussian, we then
constructed the noise vector $\noise \in \real^\usedim$ with with
independent entries of the form $\noise_i = \sigma(\instrument_i)
\Vtil_i$ \mbox{for $i = 1, \ldots, \numobs$,} where the standard
deviation function takes the form
\begin{align*}
\sigma(\instrument_i) & = 1 \Ind[\instrument_{i1} \neq 0] + \omega
\Ind[\instrument_{i1} = 0] 
\end{align*}
for some weight parameter $\omega \geq 0$.  This construction ensures
that
\begin{align*}
\SigMat & = \Exs \Big[ \noise^2 \instrument \instrument^T \Big] =
\frac{1}{2} \big \{ e_1 e_1^T \big \} + \frac{\omega}{2} \big \{2
\IdMat - 2 e_1 e_1^T \big \} \; = \; \mbox{diag} \big(1, \omega,
\omega, \ldots, \omega).
\end{align*}

\section{Empirical validation of PM2.5 analysis}

In this section, we provide a detailed description of some validation
checks performed to substantiate our empirical findings. As described
earlier, we exhibited substantial agency in the construction of the
wildfire instrument, as our initial dataset consisted of all the fires
that occurred within the US in 2016 and we needed to construct a
one-dimensional instrument for each tract representing its exposure to
wildfire smoke. In this section, we provide the reported estimates of
effects for a variety of different instruments constructed, to be
described in the sequel. Table~\ref{table:iv-results-robust} validates
our results. For almost every combination of instrument and response,
our results returns statistically significant, positive estimates of
the effect of \pollm~pollution on various poor health outcomes; the
only exception being cancer in IV-5 with a $p$-value of $0.08$, which
is close to significant. For the most part, the reported effects
remain relatively the same with a few exceptions, namely asthma and
poor mental health in IV-5, and arthritis in IV-6.

The first row, IV, is just the results reported in
Table~\ref{table:iv-results}. The rows IV-2, and IV-3, are instruments
that are computed with the same formula as IV~\eqref{eqn:inst-defn},
just with different choice of thresholds. The row IV-4 is a modified
formula of~\Cref{eqn:inst-defn}, where we adjust the weights to be
higher due to the general flow of climate from west to east. More
precisely, the formula is given by
\begin{align*}
Z_i = \mathbf{1} \{ Z_i^* \geq c \}, \qquad \text{where} \qquad Z_i^*
= \frac{1}{|\mathcal{D}|} \sum_{f \in \mathcal{D}}
\frac{f_{\text{size}} \cdot (1 + w \mathbf{1}\{ \text{$f$ is west of
    $i$} \})}{(f_{\text{distance to $i$}})^2}.
\end{align*}
where $w$ is some user-chosen weight larger than $1$. IV-5 and IV-6
are based on a different formula for constructing the instrument
\begin{align*}
Z_i = \mathbf{1} \{ Z_i^* \geq c \}, \qquad \text{where} \qquad Z_i^*
= \frac{1}{|\mathcal{D}|} \sum_{f \in \mathcal{D}}
\frac{f_{\text{size}}}{f_{\text{distance to $i$}}}.
\end{align*}
The instrument IV-5 is given by $Z_i$, and IV-6 is given by
$Z_i^*$. In every scenario, the reported first-stage $F$-statistics
was always at least $100$, so we are not concerned with weak
instruments.

\begin{table}
\caption{Instrumental variable estimates using various different
  instruments and their standard errors.}
\label{table:iv-results-robust}
\begin{center}
\begin{tabular}{ l @{\qquad}| c @{ \qquad} c @{\qquad} c @{\qquad} c @{\qquad} c @{\qquad} c  }
\hline \hline & Arthritis & Asthma & Cancer & Heart disease & PMH & Stroke \\ 
\hline IV & $12.2$ & $5.2$ & $1.3$ & $2.9$ & $5.6$ & $1.5$
\\        & $(1.3)$ & $(0.5)$ & $(0.3)$ & $(0.4)$ & $(0.7)$ & $(0.2)$
\\ \\ IV-2 & $6.5$ & $3.4$ & $0.7$ & $1.8$ & $4.0$ & $0.9$ \\ &
$(1.0)$ & $(0.4)$ & $(0.3)$ & $(0.4)$ & $(0.6)$ & $(0.2)$ \\ \\ IV-3 &
$8.8$ & $4.3$ & $1.4$ & $2.1$ & $5.0$ & $1.0$ \\ & $(0.9)$ & $(0.4)$ &
$(0.3)$ & $(0.3)$ & $(0.5)$ & $(0.2)$ \\ \\ IV-4 & $12.3$ & $5.4$ &
$1.3$ & $2.8$ & $5.7$ & $1.5$ \\ & $(1.3)$ & $(0.5)$ & $(0.3)$ &
$(0.4)$ & $(0.7)$ & $(0.2)$ \\ \\ IV-5 & $9.3$ & $9.9$ & $0.7$ & $4.5$
& $12.6$ & $2.1$ \\ & $(1.6)$ & $(1.1)$ & $(0.4)$ & $(0.6)$ & $(1.5)$
& $(0.3)$ \\ \\ IV-6 & $4.3$ & $2.1$ & $2.4$ & $2.3$ & $6.1$ & $1.3$
\\ & $(0.8)$ & $(0.3)$ & $(0.3)$ & $(0.3)$ & $(0.6)$ & $(0.2)$
\\ \hline
\end{tabular}
\subcaption*{Results are reported in \% increase per 10 \pollm}
\end{center}
\end{table}


\end{document}